\newtheorem{theorem}{Theorem}[section]
\newtheorem{lemma}{Lemma}[section]
\newtheorem{proposition}{Proposition}[section]
\newtheorem{corollary}{Corollary}[section]
\theoremstyle{definition}
\newtheorem{definition}{Definition}[section]
\theoremstyle{remark}
\newtheorem{remark}{Remark}[section]
\numberwithin{equation}{section}
\def\f{\frac}
\def\hf1{^\f{1}{1-\xi^2}}
\def\be{\begin{equation}}
\def\en{\end{equation}}
\def\bs{\begin{split}}
\def\es{\end{split}}
\def\ba{\begin{align}}
\def\ea{\end{align}}
\title[Homogenization of the  non-homogeneous incompressible
Navier-Stokes equations]
{Homogenization and corrector results for the stochastic non-homogeneous incompressible Navier-Stokes equations with rapid oscillation }
\author[Z. Qiu]{Zhaoyang Qiu}
\address{School of Applied Mathematics, Nanjing University of Finance and Economics, Nanjing, 210046, China.}
\email{zhqmath@163.com}
\author[J. Chen]{Junlong Chen}
\address{School of Sciences, Great Bay University,  Great Bay Institute for Advanced Study, Dongguan 523000, China.
}
\address{School of Mathematics, University of Science and Technology of China, Hefei, 230026, Anhui, China.}
\email{chenjunlong@gbu.edu.cn}
\author[J. Duan]{Jinqiao Duan}
\address{Department of Mathematics and Department of Physics, Great Bay University, Dongguan,
	Guangdong 523000, China}
\email{duan@gbu.edu.cn}
\keywords{Homogenization,  stochastic non-homogeneous incompressible
Navier-Stokes equations, two-scale convergence, stochastic compactness, corrector result}
\subjclass[2010]{35Q35, 76D05, 35R60, 60F10}
\date{\today}
\begin{document}
\begin{abstract}
In this paper we are concerned with the homogenization property of stochastic non-homogeneous incompressible
Navier-Stokes equations with rapid oscillation in a smooth bounded domain of $\mathbb{R}^d$, $d=2,3$, and driven by  multiplicative infinite-dimensional Wiener noise. Using two-scale convergence, stochastic compactness and the martingale representation theory, we first show the solutions of original equations converge to the solution of a stochastic non-homogeneous incompressible homogenized system. Also, the energy equation of the homogenized system is established.  Furthermore, a corrector result is proved which strengthens the two-scale convergence from weak to strong in the regularity space $H^1(\mathcal{O})$. Since the continuity equation which is of transport type cannot confer any regularization effect, there are some issues for proving the two results, including the difficulties for establishing the stochastic compactness and passing to the limit. We develop new regularity estimates, a stochastic version of lower semicontinuity as well as energy equation to overcome these difficulties.
\end{abstract}

\maketitle
\section{Introduction}
\subsection{Governing equations}
The non-homogeneous incompressible Navier-Stokes equations govern the motion of a fluid with spatially and temporally varying density under the assumption of incompressibility. These equations comprise a momentum equation subject to the incompressibility constraint, along with a continuity equation that expresses mass conservation for variable-density flows. They play a fundamental role in modeling fluid behavior where density variations are significant yet the incompressibility condition remains applicable, such as  thermal convection, buoyancy-driven flows, and multiphase systems. For further physical background, we refer  readers to \cite{lad, cww, sim1}.
In this paper, we study stochastic non-homogeneous incompressible Navier-Stokes equations featuring rapidly oscillating terms in the diffusion component and the external force, the specific form is as follows
\begin{eqnarray}\label{equ1}
\left\{\begin{array}{ll}
\!\!\!\partial_t\rho^{\varepsilon}+{\rm div}(\rho^\varepsilon \mathbf{u}^{\varepsilon})=0,\\\\
\!\!\!\partial_t (\rho^\varepsilon\mathbf{u}^{\varepsilon})+A^\varepsilon \mathbf{u}^{\varepsilon}+{\rm div}(\rho^\varepsilon\mathbf{u}^\varepsilon\otimes \mathbf{u}^\varepsilon)+\nabla \mathrm{\pi} =f^\varepsilon( \mathbf{u}^{\varepsilon})+g(\mathbf{u}^{\varepsilon})\frac{dW}{dt},\\\\
\!\!\! {\rm div}\mathbf{u}^{\varepsilon}=0,\\\\
\!\!\! \mathbf{u}^{\varepsilon}|_{\partial \mathcal{O}}=0, ~~~\rho^\varepsilon(0,x)=\rho_0,~~~ \mathbf{u}^{\varepsilon}(0,x)=\mathbf{u}_0,
\end{array}\right.
\end{eqnarray}
where $\mathcal{O}$ is a  bounded domain of class $C^2$ in $\mathbb{R}^d$, $d=2,3$, $\mathbf{u}^\varepsilon: \mathbb{R}^d \times \mathbb{R}^+\rightarrow \mathbb{R}^d$ is the velocity of the fluid flow and $\rho^\varepsilon: \mathbb{R}^d \times \mathbb{R}^+\rightarrow \mathbb{R}$ is the density of fluid flow, which account for the momentum equation and the mass equation respectively,  $\mathrm{\pi}: \mathbb{R}^d \times \mathbb{R}^+\rightarrow \mathbb{R}$ is the pressure.  $\varepsilon\in (0,1)$ is the scale parameter representing the ratio of the microscopic to macroscopic scales.  $f^\varepsilon$ is external force with  oscillation parameter $\varepsilon$
$$f^\varepsilon( \mathbf{u}^{\varepsilon})=f\left(\frac{x}{\varepsilon},  t, \mathbf{u}^\varepsilon\right).$$
$g$ is a noise intensity operator. $W$ is an $H$-valued $Q$-Wiener process defined on $\mathcal{S}:=(\Omega,\mathcal{F},\{\mathcal{F}_{t}\}_{t\geq0},\mathrm{P})$ which is adapted to the complete, right continuous filtration $\{\mathcal{F}_{t}\}_{t\geq 0}$. Assume that $\{\mathbf{e}_{k}\}_{k\geq 1}$ is a complete orthonormal basis of $H$ such that $Q\mathbf{e}_{k}=\lambda_{k}\mathbf{e}_{k}$, then $W$ can be written formally as the expansion $W(t,\omega)=\sum_{k\geq 1}\sqrt{\lambda_{k}}\mathbf{e}_{k}W_k(t,\omega)$, where $\{W_{k}\}_{k\geq 1}$ is a sequence of independent standard one-dimensional Brownian motions, see \cite{Zabczyk} for more details.
Let $H_{0}=Q^{\frac{1}{2}}H$, then $H_{0}$ is a Hilbert space with the inner product
\begin{equation*}
\langle h,\eta\rangle_{H_{0}}=\langle Q^{-\frac{1}{2}}h,Q^{-\frac{1}{2}}\eta\rangle_{H},~ \forall~~ h,\eta\in H_{0},
\end{equation*}
with the induced norm $\|\cdot\|_{H_{0}}^{2}=\langle\cdot,\cdot\rangle_{H_{0}}$. The imbedding map $i:H_{0}\rightarrow H$ is a Hilbert-Schmidt and hence  a compact operator with $ii^{\ast}=Q$, where $i^{\ast}$ is the adjoint of the operator $i$. Then, $W\in C([0,T]; H_0)$ almost surely.

 The term $A^\varepsilon \mathbf{u}^\varepsilon$ represents  the diffusion effect, where the differential operator $A^\varepsilon$ takes the form
$$A^\varepsilon=-\sum_{i,j=1}^{d}\frac{\partial}{\partial x_i}\left(a^\varepsilon_{i,j}\frac{\partial}{\partial x_j}\right).$$
Here the oscillatory coefficient
$$a^\varepsilon_{i,j}(x,t)=a_{i,j}\left(\frac{x}{\varepsilon}, t\right)$$
is symmetric, thus
$$a_{i,j}=a_{j,i},~ i,j=1,\cdots,d$$
and the function $a_{i,j}\in L^\infty (\mathbb{R}^d_y\times \mathbb{R}^+)$. The space $\mathbb{R}^d_y$ is the space $\mathbb{R}^d$ of variable $y=(y_1,y_2, \cdots, y_d)$. In the area of material science, the coefficient $a_{i,j}\left(\frac{x}{\varepsilon}, t\right)$ could be used to describe the microscopic characteristics. As the scale parameter $\varepsilon$ diminishes, it enables the revelation of the intrinsic properties of composite materials, thereby providing a theoretical foundation for their efficient utilization. The operator $A^\varepsilon$ is assumed to satisfy the uniformly elliptic condition, thus, there exists constant $\kappa>0$ such that
\begin{align}\label{1.2}
\sum_{i,j=1}^da^\varepsilon_{i,j}(x,t)\xi_i \xi_j\geq \kappa |\xi|^2,
\end{align}
for any $x, \xi\in \mathbb{R}^d, t\in \mathbb{R}^+$. Here $|\cdot|$ is the Euclidean norm in $\mathbb{R}^d$.
In the composite material, heterogeneity is minimized relative to the overall sample size, such that the mixture exhibits macroscopic homogeneity. This justifies the assumption of a uniform distribution of heterogeneities, which can be mathematically represented by periodicity. Therefore, the coefficient $a_{i,j}$ satisfies the periodicity hypothesis, thus for any $y\in \mathbb{R}_y^d$ and $\widetilde{y}\in \mathbb{Z}^d$,
\begin{align*}a_{i,j}(y+\widetilde{y}, t)=a_{i,j}(y,t).\end{align*}
The study of stochastic non-homogeneous incompressible equations \eqref{equ1} has advanced significantly over the last decade years. The pioneering stochastic result is due to H. Yashima \cite{yas} which established the global existence of martingale solutions of the system with
non-vacuum in the initial density, influenced by additive Gaussian noise. M. Sango \cite{san1} extended the result to the cases of non-Lipschitz multiplicative noise, meanwhile allowing the appearance of vacuum in the initial density. D. Wang et al. \cite{cww} proved the existence of global martingale weak solutions of the equations driven by multiplicative Le\'{v}y noise.

\subsection{Research Progress}
Homogenization is a method used to replace a heterogeneous (highly varying or complex) system with an equivalent homogeneous (uniform) system, while preserving its overall large-scale behavior. This approach is particularly valuable for analyzing systems with properties that vary on small scales, such as materials with microstructures or media with oscillatory coefficients. Research in homogenization not only facilitates numerical computation but also enhances the application of mathematics in dynamical and thermodynamic modeling.

The mathematical theory of homogenization is a rich and interdisciplinary field, which was initially developed by  A. Bensoussan,  J.L. Lions, et al. in the work \cite{ben} in the periodic setting. Then, in the 1970s, G. Nguetseng  \cite{ngu2} and G. Allaire \cite{all} formalized the concept of two-scale convergence,  allowing for the systematic study of partial differential equations (PDEs) with rapidly oscillating coefficients and provided a rigorous framework for deriving effective equations, which is a cornerstone of modern homogenization theory. Building on the two-scale convergence technique, G. Allaire et al. \cite{all2} further studied the homogenization of  nonlinear reaction-diffusion equations with a large oscillation reaction term. L. Signing \cite{sig,sig2} considered the homogenization of the unsteady Stokes-type equations and the unsteady Navier-Stokes equations. The second author and Y. Tang  \cite{CT} studied the homogenization of non-local nonlinear $p$-Laplacian equations with variable index and periodic structure.
In the case of including both heterogeneous coefficients $A^{\varepsilon}$ and the perforated domain,
	W. J\" ager and J.L. Woukeng used  two-scale $\sigma$-convergence to solve the homogenization problem  of the Richards equations and the
	Darcy-Lapwood-Brinkmann system, see \cite{Woukeng2021,Woukeng2023}. W. Niu et al.  studied the periodic homogenization and convergence rates of coefficients in linear elliptic systems and parabolic systems with several time and spatial scales in \cite{Niu2020,Niu2023,Niu2024}, i.e.
	$a^\varepsilon_{i,j}=a_{i,j}\left(x,t, \frac{x}{\varepsilon_1}, \frac{t}{\varepsilon_2}, \frac{x}{\varepsilon_3}, \frac{t}{\varepsilon_4}\cdots\right)$, $\varepsilon_i,  i <\infty$ is a function of $\varepsilon>0$.

In the theory of random homogenization, a key analytical tool is the stochastic two-scale convergence method, introduced by Bourgeat et al.  \cite{Bourgeat1994}.  It is worth mentioning that  S. Neukamm et al. \cite{neukamm2021,neukamm2018} proposed an equivalent characterization of stochastic two-scale convergence using the stochastic unfolding operator, and applied it to the homogenization of abstract linear time-dependent PDEs. Based on the stochastic two-scale convergence, M. Sango et al.\cite{raz} generalized the result \cite{all2} to stochastic reaction-diffusion equations with almost periodic framework, see also \cite{moh} for the type of linear hyperbolic stochastic PDEs.
J. Duan et al. \cite{duan,duan2} considered the homogenization of stochastic PDEs related to Hamiltonian systems etc. with L\'{e}vy noise, see also \cite{HJS, wang2} for the non-symmetric jump processes, and stochastic PDEs with dynamical boundary conditions. The first two authors and Y. Tang \cite{chen} proved the homogenization property for the stochastic abstract fluid models with multiplicative cylindrical Wiener process, including the homogeneous Navier-Stokes equations, the Boussinesq equations, the  Allen-Cahn equations etc. We refer  readers  to \cite{ bess, bess1, bess2, Bessaih2021, glo, ich,moh2, neu, par, san2, wang1, zhang} and references therein for more results.

As far as the authors
are aware, there appears to be no existing result in the literature concerning the homogenization of the stochastic non-homogeneous incompressible Navier-Stokes equations \eqref{equ1} with periodically oscillating coefficients.
The main goal of the present paper
is to consider this problem of system \eqref{equ1} for $d=2,3$ as $\varepsilon\rightarrow 0$. Moreover, the homogenization results established here are novel even in the context of deterministic equations.  In the future, we will study the coefficient
$A^{\varepsilon}$
from the periodic case to the context of almost periodic and stationary ergodicity.

\subsection{Definitions and Assumptions}
Before presenting the main results, we first introduce some basic notations, definitions of two-scale convergence and the assumptions. Throughout the paper, if \(\alpha_1, \alpha_2 \in \mathbb{R} \), we define  \(\alpha_1 \lesssim_\alpha C\alpha_2, \) means that the constant $C>0$ relies on $\alpha$ such that \( \alpha_1\leq C(\alpha)\alpha_2 \).
 Denote by $\mathcal{O}_t = \mathcal{O} \times [0,T]$ and $D=\left(-\frac{1}{2},\frac{1}{2}\right)^d$ which is the subset of $\mathbb{R}_y^d$. Denote by $L^p_{per}(D)$ all the  $D$-periodic functions in $L_{loc}^p(\mathbb{R}^d)$, endowed with the norm
$$\|\mathbf{f}\|^p_{L^p_{per}(D)}=\int_{D}|\mathbf{f}(y)|^pdy,$$
which is a Banach space.

Now, we recall the concepts of weak, strong two-scale convergence.
\begin{definition} A sequence of $L^p(\mathcal{O}_t)$-valued random variables $\{\mathbf{u}^\varepsilon\}_{\varepsilon\in (0,1)}$ is  weak-$\Sigma$ convergent in $L^p( \mathcal{O}_t)$ if there exists a certain $L^p(\mathcal{O}_t; L^p_{per}(D))$-valued random variable $\mathbf{u}$ such that as $\varepsilon\rightarrow 0$,
	$$\int_{\mathcal{O}_t}\mathbf{u}^\varepsilon(x,t)\psi\left(x,\frac{x}{\varepsilon},t\right)dxdt\rightarrow \int_{\mathcal{O}_t}\int_{D}\mathbf{u}(x,y,t)\psi(x,y,t)dxdydt,$$
	for any $\psi\in L^{p'}( \mathcal{O}_t; L^{p'}_{per}(D))$, where $\frac{1}{p}+\frac{1}{p'}=1$.
\end{definition}

\begin{definition}\label{def4.2} A sequence of $L^p(\mathcal{O}_t)$-valued random variables $\{\mathbf{u}^\varepsilon\}_{\varepsilon\in (0,1)}$ is strong-$\Sigma$ convergent in $L^p(\Omega\times \mathcal{O}_t)$ if there exists a certain $L^p(\mathcal{O}_t; L^p_{per}(D))$-valued random variable $\mathbf{u}$ such that as $\varepsilon\rightarrow 0$,
	$$\mathrm{E}\left(\int_{\mathcal{O}_t}\mathbf{u}^\varepsilon(x,t, \omega)\mathbf{v}^\varepsilon(x,t, \omega) dxdt\right)\rightarrow \mathrm{E}\left(\int_{\mathcal{O}_t}\int_{D}\mathbf{u}(x,y,t,\omega)\mathbf{v}(x,y,t,\omega)dxdydt\right),$$
	for any bounded $\mathbf{v}^\varepsilon\in L^{p'}(\Omega\times \mathcal{O}_t)$ with $\mathbf{v}^\varepsilon \rightarrow \mathbf{v}$ in $L^{p'}(\Omega\times \mathcal{O}_t)$, weak-$\Sigma$, where $\frac{1}{p}+\frac{1}{p'}=1$.
\end{definition}

{\bf Assumptions on $f$ and $g$}.
For the external force $f$,  we assume that the function 
$f: \mathbb{R}_y^{d}\times \mathbb{R}^+\times \mathbb{R}^{d}\rightarrow \mathbb{R}^{d}$ 
is $\mathbb{Z}^d $ periodic with respect to the variable $y$,
 moreover the Lipschitz and linear growth conditions hold, thus there exists two constants  $c_1, c_2>0$ such that

\qquad $(\mathbf{A.1})~~ |f(y, t, \xi_1)-f(y, t, \xi_2)|\leq c_1|\xi_1-\xi_2|, ~{\rm for}~ (y,t)\in \mathbb{R}_y^{d}\times \mathbb{R}^+, ~\xi_1, \xi_2\in  \mathbb{R}^{d}; $\\

\qquad $(\mathbf{A.2})~~ |f(y, t, \xi)|\leq c_2(1+|\xi|),  ~{\rm for}~ (y,t)\in \mathbb{R}_y^{d}\times \mathbb{R}^+,~\xi\in  \mathbb{R}^{d}$.
 
 For the operator $g$ we also assume that $g: H\rightarrow L_2(H; H)$ satisfies the Lipschitz and linear growth conditions, thus there exists two constants  $c_3, c_4>0$ such that

\qquad $(\mathbf{A.3})~~ \|g(\mathbf{u}_1)-g(\mathbf{u}_2)\|^2_{L_2(H; H)}\leq c_3\|\mathbf{u}_1-\mathbf{u}_2\|_{H}^2, ~{\rm for}~ \mathbf{u}_1, \mathbf{u}_2\in H; $\\

\qquad $(\mathbf{A.4})~~ \|g(\mathbf{u})\|^2_{L_2(H; H)}\leq c_4(1+\|\mathbf{u}\|_{H}^2),  ~{\rm for}~ \mathbf{u}\in H,$\\
 where  $L_2(H; H):=\left\{SQ^{\frac{1}{2}}: \, S\in L_{Q}(H_{0}; H)\right\}$, and $L_{Q}(H_{0}; H)$ is the space of all linear operators $S:H_{0}\rightarrow H$ such that $SQ^{\frac{1}{2}}$ is a linear Hilbert-Schmidt
operator from $H$ to itself,  endowed with
  the norm $$\|S\|_{L_{Q}}^{2}=tr(SQS^{\ast}) =\sum\limits_{k\geq 1}\| SQ^{\frac{1}{2}}\mathbf{e}_{k}\|_{H}^{2}.$$

\subsection{Main results} We now turn to a more precise statement of our
results.
\begin{theorem}\label{thm2.1} Assume that the assumptions \((\mathbf{A.i})\), \(\mathbf{i} = 1, 2, 3, 4\) hold and initial data $(\rho_0, \mathbf{u}_0)$ is deterministic with \(\mathbf{u}_0 \in H\), \(0 < m \leq \rho_0 \leq M<\infty\). Then, we have that there exists a subsequence of solutions $\{\rho^\varepsilon, \mathbf{u}^\varepsilon, \rho^\varepsilon\mathbf{u}^\varepsilon\}_{\varepsilon\in (0,1)}$ of equations \eqref{equ1} satisfying the convergence $\mathrm{P}$ a.s.
\begin{eqnarray}\label{2.2}
\left\{\begin{array}{ll}
\!\!\!\mathbf{u}^\varepsilon\rightarrow \mathbf{u},~
{\rm strongly ~in}~ L^2(0,T; H),\\\\
\!\!\!\nabla \mathbf{u}^\varepsilon\rightarrow \nabla_x \mathbf{\mathbf{u}}+\nabla_y \overline{\mathbf{u}}, ~{\rm weak }-\Sigma~{\rm in}~ L^2(0,T; H),\\\\
\!\!\! \rho^\varepsilon\rightarrow \rho, ~{\rm strongly ~in}~ C([0,T]; W^{-\alpha,p}(\mathcal{O})),\\\\
\!\!\! \rho^\varepsilon\mathbf{u}^\varepsilon\rightarrow \rho\mathbf{u}, ~{\rm strongly ~in}~ C([0,T]; W^{-\alpha, 2}(\mathcal{O})),
\end{array}\right.
\end{eqnarray}
for any $\alpha\in (0,1), p\in (1,\infty)$, where the corrector $\overline{\mathbf{u}}$ is given by Lemma \ref{lem4.6}. Moreover, for every $t\in [0,T]$, and $\phi\in H^1(\mathcal{O})$, $\varphi\in V$,  the limit  $(\rho(t), \mathbf{u}(t),  \rho(t)\mathbf{u}(t))$ satisfies the following homogenized Navier-Stokes equations 
\begin{eqnarray*}
\left\{\begin{array}{ll}
\!\!\!(\rho(t), \phi)-(\rho_0, \phi)-\int_{0}^{t}(\rho(s) \mathbf{u}(s), \nabla\phi)ds=0,\\\\
\!\!\!(\rho(t)\mathbf{u}(t), \varphi)=(\rho_0\mathbf{u}_0, \varphi)-\int_0^t(\overline{A }\mathbf{u}(s), \varphi)_{ V'\times V}ds+\int_0^t(\rho(s)\mathbf{u}(s)\otimes \mathbf{u}(s), \nabla\varphi)ds\nonumber\\
\!\!\!\qquad\qquad+\int_{0}^{t}(\overline{f}( \mathbf{u}(s)), \varphi)ds+\int_{0}^{t}(g(\mathbf{u}(s))dW, \varphi),
\end{array}\right.
\end{eqnarray*}
in which the homogenized operator $\overline{A }$ is given in \eqref{4.39}, the function $\overline{f}$ is given by
$$\overline{f}(\mathbf{u})=\int_{D}f(y, s, \mathbf{u})dy.$$
Furthermore, for every $t\in [0,T]$, $( \mathbf{u}(t), \overline{\mathbf{u}}(t),  \rho(t)\mathbf{u}(t))$ satisfies the following energy equation  $\mathrm{P}$ a.s.
\begin{align}\label{1.4}
&\| \sqrt{\rho(t)}\mathbf{u}(t)\|^2_{L^2(\mathcal{O})}- \| \sqrt{\rho_0}\mathbf{u}_0\|^2_{L^2(\mathcal{O})}\nonumber\\
&=-2\sum_{i,j=1}^d\int_{0}^{t}\int_{\mathcal{O}}\int_{D}a_{i,j}(y,s)\left(\frac{ \partial\mathbf{u}(x, s)}{\partial{x_i}}+\frac{ \partial\overline{\mathbf{u}}(x, y, s)}{\partial {y_i}}\right)\cdot \left(\frac{ \partial\mathbf{u}(x, s)}{\partial x_j}+\frac{ \partial\overline{\mathbf{u}}(x, y, s)}{\partial y_j}\right) dxdyds\nonumber\\
&\quad+2\int_{0}^{t}\int_{\mathcal{O}}\int_{D}f( y, s, \mathbf{u}(s)) \mathbf{u}(s)dxdyds+2\int_{0}^{t}(g(\mathbf{u}(s))dW,  \mathbf{u}(s))+
\int_{0}^{t}\|g(\mathbf{u}(s))\|_{L_2(H;H)}^2ds.
\end{align}
\end{theorem}

The Theorem \ref{thm2.1} is proved by combining the two-scale convergence, stochastic compactness method as well as the Jakubowski version of the Skorokhod representation theorem.
From a theoretical perspective,
the homogenization problem is significantly more complicated for non-homogeneous equations \eqref{equ1} compared with  reaction-diffusion equations, homogeneous hydrodynamic equations, etc..
Since the equations \eqref{equ1} contain the continuity equation which is a transport equation, several difficulties arise in proving the homogenization problem, including the establishment of a priori estimates and the passage to the limit in the approximating system.

Now, let us explain why  the establishment of a priori estimates and the passage to the limit in the approximating system are challenging. The papers \cite{moh, raz} established the homogenization for the stochastic homogeneous equations driven by the finite-dimensional Brownian motions, and the homogenized stochastic equations were given in the sense of expectation. Here, our goal is to deal with the  infinite-dimensional $Q$-Wiener process, and build the homogenized Navier-Stokes equations in the pathwise sense. Hence, 
the method used in \cite{moh, raz} is not suitable for our situation. To this end, the martingale representation method is invoked to overcome this difficulty. Following the strategy,  we need to show that for every $t\in [0,T]$, and  P a.s. $\omega$
$$\mathcal{G}_t^\varepsilon(\rho^\varepsilon, \mathbf{u}^{\varepsilon})\rightarrow \mathcal{G}_t(\rho, \mathbf{u}), ~~as~~ \varepsilon\rightarrow 0,$$
where $\mathcal{G}_t^\varepsilon, \mathcal{G}_t$ are two functionals defined in Step 2 of Proposition \ref{pro2.1}, for which we  need to overcome two  difficulties: (i) Due to the adverse effect of the continuity equation and the noise, the first difficulty is how to 
establish the time-regularity estimates of $\mathbf{u}^\varepsilon$; (ii) The  two-scale convergence of the stochastic version (Lemma \ref{lem4.2}) defined in the sense of expectation cannot be used here, hence the second difficulty is how to pass to the limit of the diffusion term.

Firstly, to establish the time-regularity estimates of $\mathbf{u}^\varepsilon$, we need to prove
$$\mathrm{E} \left(\sup_{\theta\in (0,T)}\left(\theta^{-\frac{1}{2}}\int^{T - \theta}_{0} \| \rho^{\varepsilon}(t + \theta) \mathbf{u}^{\varepsilon}(t + \theta) -\rho^{\varepsilon}(t) \mathbf{u}^{\varepsilon}(t) \|^2_{V^{\prime}} \, dt\right)\right)
	\lesssim_{m,M,c_2, c_4,\kappa,T} C.$$
Hence, it is unavoidable to prove the following estimate for noise
\begin{align}\label{e1}\mathrm{E} \left(\sup_{\theta\in (0,T)}\left(\theta^{-\frac{1}{2}}\int_{0}^{T-\theta}\left\|\int^{t + \theta}_{t} g( \mathbf{u}^{\varepsilon}(s)) \, dW\right\|^2_{V'}dt\right)\right)
	\lesssim_{m,M,c_2, c_4,\kappa,T} C.\end{align}
Note that the classical Burkholder-Davis-Gundy inequality cannot be applied directly to deal with it. In order to control the noise, we introduce a new random variable 
$$K(\omega)=\sup_{0\leq s<t\leq T}\frac{\|G(t)-G(s)\|_{V'}}{|t-s|^\gamma}, $$
where $G(t):=\int_0^tg( \mathbf{u}^{\varepsilon}(s)) \, dW$. We can show $K(\omega)<\infty$,~ {\rm P}~ a.s.
As a result,  the inequality $\|G(t)-G(s)\|_{V'}\leq K(\omega)|t-s|^\gamma$ is meaningful. Then, the right-hand side term of \eqref{e1} can be transformed  into a controllable form, see Lemma \ref{lem3.2} for details.

Secondly, in order to pass to the limit of the diffusion term, we use the deterministic two-scale convergence, Lemmas \ref{lem2.1}, \ref{lem4.2*} rather than the stochastic version. Hence we need the uniform estimates for every $s\in [0,T]$, and P a.s. $\omega$
$$\int_{0}^{t}\|\nabla \mathbf{u}^{\varepsilon}(s)\|_{L^2(\mathcal{O})}^2ds\lesssim_{\omega, m,M,c_2, c_4,\kappa,T} C.$$
It is worth  mentioning that the aforementioned uniform estimates are not the direct consequence of uniform estimates $\mathbf{u}^{\varepsilon}\in L^2(\Omega; L^2(0,T;V))$ in $\varepsilon$. In this paper, by fully exploiting the construction of the equations, we  provide a rigorous proof of the estimates, see Lemma \ref{lemapx.1}.

\begin{remark} We emphasize that the convergence result \eqref{2.2} holds only in a new probability space \((\Omega, \mathcal{F}, \mathrm{P})\), not in the original stochastic basis \(\mathcal{S}\), owing to an application of the Skorokhod-Jakubowski representation theorem. Thus, the convergence is weak both in the sense of probability and PDEs.
\end{remark}

\begin{remark} Unlike \cite{cww}, here the oscillation external force $f^\varepsilon$ cannot  depend on the density $\rho^\varepsilon$. In other words, we cannot even deal with the simple case $\rho^\varepsilon f^{\varepsilon}$. The reason is as follows: the weak
convergence inherited from the uniform bounds is not enough to identify the limit, thus, $\rho^\varepsilon\rightarrow \rho$ in $C([0,T]; W^{-\alpha, p}(\mathcal{O}))$ with $\alpha\in (0,1)$ and $f^\varepsilon(\mathbf{u}^\varepsilon)\rightarrow f(\cdot, \cdot, \mathbf{u})$ ~{\rm weak}-$\Sigma$,~ {\rm in}~$L^2( \mathcal{O}_t)$ are very weak, hence, we cannot find a suitable space to pass to the limit in the sense of two-scale convergence. However, the  fact that $f^\varepsilon$ does not depend on  $\rho^\varepsilon$ brings troubles in the a priori $p$-order moment estimates. In order to solve the  problem, we have to assume that the initial density $\rho_0$ is away from the vacuum.
\end{remark}


The following corrector result is proved which improves the weak-$\Sigma$ convergence of $\nabla\mathbf{u}^\varepsilon$ in $L^2(\mathcal{O}_t)$ to strong-$\Sigma$ in $L^2(\mathcal{O}_t)$.
\begin{theorem}\label{thm2} Under the same assumptions as those of in Theorem \ref{thm2.1}, we have
$$\frac{\partial\mathbf{u}^\varepsilon}{\partial x_i}\rightarrow \frac{\partial\mathbf{u}}{\partial x_i}+\frac{\partial\overline{\mathbf{u}}}{\partial y_i}, ~{\rm in }~ L^2(\Omega\times \mathcal{O}_t),~{\rm strong}-\Sigma, ~1\leq i\leq d$$
as $\varepsilon\rightarrow 0$.
\end{theorem}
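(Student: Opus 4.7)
The plan is a standard corrector argument adapted to the stochastic non-homogeneous setting: I will use the uniform ellipticity of $A^\varepsilon$ to convert the desired strong-$\Sigma$ convergence into the vanishing of a quadratic defect, and then handle the resulting expression through energy identities for $\mathbf{u}^\varepsilon$ and $\mathbf{u}$, combined with the norm convergence $\mathrm{E}\|\sqrt{\rho^\varepsilon(t)}\mathbf{u}^\varepsilon(t)\|^2 \to \mathrm{E}\|\sqrt{\rho(t)}\mathbf{u}(t)\|^2$ outlined around \eqref{sup1}.

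First I would introduce the corrector test field
\[
\xi^\varepsilon(x,t) := \nabla_x \mathbf{u}(x,t) + (\nabla_y \overline{\mathbf{u}})\!\left(x,t,\tfrac{x}{\varepsilon},\tfrac{t}{\varepsilon}\right)
\]
and invoke \eqref{1.2} to obtain
\begin{align*}
\kappa\,\mathrm{E}\!\int_{\mathcal{O}_t}\!|\nabla\mathbf{u}^\varepsilon-\xi^\varepsilon|^2 dxdt
&\leq \mathrm{E}\!\int_{\mathcal{O}_t}\!a^\varepsilon_{ij}(\partial_j\mathbf{u}^\varepsilon-\xi^\varepsilon_j)(\partial_i\mathbf{u}^\varepsilon-\xi^\varepsilon_i)\,dxdt \\
&= \mathrm{E}\!\int_0^T\!(A^\varepsilon\mathbf{u}^\varepsilon,\mathbf{u}^\varepsilon)\,ds - 2\mathrm{E}\!\int_{\mathcal{O}_t}\!a^\varepsilon\nabla\mathbf{u}^\varepsilon\!\cdot\xi^\varepsilon\,dxdt + \mathrm{E}\!\int_{\mathcal{O}_t}\!a^\varepsilon\xi^\varepsilon\!\cdot\xi^\varepsilon\,dxdt.
\end{align*}
The cross and pure-corrector terms are treated directly by two-scale analysis: $\xi^\varepsilon$ and $a^\varepsilon\xi^\varepsilon$ admit strong-$\Sigma$ limits (they are built from smooth functions evaluated at the fast scale), while $\nabla\mathbf{u}^\varepsilon$ has the weak-$\Sigma$ limit $\nabla_x\mathbf{u}+\nabla_y\overline{\mathbf{u}}$ from Theorem \ref{thm2.1}, so both terms converge to
\[
\pm\,\mathrm{E}\!\int_0^T\!\!\int_{D_\tau}\!a_{ij}(\partial_j\mathbf{u}+\partial_{y_j}\overline{\mathbf{u}})(\partial_i\mathbf{u}+\partial_{y_i}\overline{\mathbf{u}})\,dyd\tau dxds,
\]
which equals $\pm\,\mathrm{E}\!\int_0^T(\overline{A}\mathbf{u},\mathbf{u})ds$ by the cell-problem definition of $\overline{A}$ in \eqref{4.39}.

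For the remaining first term, I would apply It\^o's formula to $\|\sqrt{\rho^\varepsilon(t)}\mathbf{u}^\varepsilon(t)\|^2$, using the continuity equation to absorb $\partial_t\rho^\varepsilon$ and the weak formulation \eqref{2.1*}, and take expectation to annihilate the stochastic integral, yielding the energy identity
\begin{align*}
\mathrm{E}\|\sqrt{\rho^\varepsilon(T)}\mathbf{u}^\varepsilon(T)\|^2 + 2\mathrm{E}\!\int_0^T\!(A^\varepsilon\mathbf{u}^\varepsilon,\mathbf{u}^\varepsilon)ds
&= \mathrm{E}\|\sqrt{\rho_0}\mathbf{u}_0\|^2 + 2\mathrm{E}\!\int_0^T\!(f^\varepsilon(\mathbf{u}^\varepsilon),\mathbf{u}^\varepsilon)ds \\
&\quad + \mathrm{E}\!\int_0^T\!\|g(\mathbf{u}^\varepsilon)\|_{L_2(H;H)}^2\,ds,
\end{align*}
together with the analogous identity for the limit $(\rho,\mathbf{u},\overline{A},\overline{f})$. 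Subtracting, the required convergence $\mathrm{E}\!\int_0^T(A^\varepsilon\mathbf{u}^\varepsilon,\mathbf{u}^\varepsilon)ds \to \mathrm{E}\!\int_0^T(\overline{A}\mathbf{u},\mathbf{u})ds$ reduces to three independent limits: the norm convergence at $t=T$ flagged in the introduction; convergence of the $f^\varepsilon$-term from strong $L^2(\Omega\times\mathcal{O}_t)$-convergence of $\mathbf{u}^\varepsilon$ combined with the two-scale limit of $f^\varepsilon$ and the growth condition $(\mathbf{A.2})$; and the $g$-term via the Lipschitz assumption $(\mathbf{A.3})$. Plugging back gives $\nabla\mathbf{u}^\varepsilon-\xi^\varepsilon\to 0$ in $L^2(\Omega\times\mathcal{O}_t)$, from which the claimed strong-$\Sigma$ convergence follows by Definition \ref{def4.2} together with the strong-$\Sigma$ convergence of $\xi^\varepsilon$.

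The principal obstacle is the pointwise-in-$t$ norm convergence $\mathrm{E}\|\sqrt{\rho^\varepsilon(t)}\mathbf{u}^\varepsilon(t)\|^2 \to \mathrm{E}\|\sqrt{\rho(t)}\mathbf{u}(t)\|^2$: standard compactness tools such as the Aubin--Lions lemma are unavailable here because $\rho^\varepsilon\mathbf{u}^\varepsilon$ lies only in $L^p(\Omega;L^\infty(0,T;L^2(\mathcal{O})))$ with no useful control on $\partial_t(\rho^\varepsilon\mathbf{u}^\varepsilon)$. Following the introduction, I would first establish a stochastic lower-semicontinuity principle that supplies the $\liminf$ inequality, and then extract the matching $\limsup$ by passing to the limit in the $\varepsilon$-energy identity itself, where weak lower semicontinuity of the $A^\varepsilon$-energy closes the loop. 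Once this two-sided estimate is secured, the corrector expansion above runs without further difficulty.
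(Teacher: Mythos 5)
Your overall architecture coincides with the paper's: the same energy identity obtained from It\^o's formula combined with the continuity equation, the same two-sided $\liminf$/$\limsup$ argument for the pointwise-in-$t$ convergence of $\mathrm{E}\|\sqrt{\rho^\varepsilon(t)}\mathbf{u}^\varepsilon(t)\|^2_{L^2(\mathcal{O})}$ resting on a stochastic lower-semicontinuity lemma (the paper's Lemma \ref{lem5.1}), and the same quadratic-defect expansion controlled by the uniform ellipticity \eqref{1.2}.

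There is, however, one concrete gap. You expand the defect against the exact corrector field $\xi^\varepsilon(x,t)=\nabla_x\mathbf{u}(x,t)+(\nabla_y\overline{\mathbf{u}})(x,t,x/\varepsilon,t/\varepsilon)$ and assert that $\xi^\varepsilon$ and $a^\varepsilon\xi^\varepsilon$ admit strong-$\Sigma$ limits because they are ``built from smooth functions evaluated at the fast scale.'' They are not: $\mathbf{u}$ and $\overline{\mathbf{u}}$ are only $L^2$-regular random fields (by Lemma \ref{lem4.2}, $\overline{\mathbf{u}}\in L^2(\Omega\times\mathcal{O}_t;L^2_{per}(D_\tau))$), so the evaluation of $\nabla_y\overline{\mathbf{u}}$ on the lower-dimensional set $\{y=x/\varepsilon,\ \tau=t/\varepsilon\}$ is not even well defined, let alone strongly two-scale convergent; likewise $a^\varepsilon\xi^\varepsilon$ is not an admissible test function for the weak-$\Sigma$ convergence of $\nabla\mathbf{u}^\varepsilon$, so your cross term does not pass to the limit as written. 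The paper never instantiates the exact corrector: it expands the defect against the smooth oscillating test functions $\Psi^\varepsilon=(\varphi+\varepsilon\psi(x,t,x/\varepsilon,t/\varepsilon))1_{\mathcal{A}}(\omega)$, passes to the limit there (see \eqref{5.4}--\eqref{5.7}), and only then invokes the density of such $(\varphi,\psi)$ together with the ellipticity-weighted smallness \eqref{5.8}--\eqref{5.9} and a three-term triangle inequality to obtain convergence of $\|\partial\mathbf{u}^\varepsilon/\partial x_i\|_{L^2(\Omega\times\mathcal{O}_t)}$ to the norm of the two-scale limit, whence strong-$\Sigma$ convergence by Lemma \ref{lem4}. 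You need to insert this approximation/density layer; once it is in place, the rest of your argument goes through and matches the paper's proof.
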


 We already established the strong-$\Sigma$ convergence for stochastic homogeneous  hydrodynamic  models, Allen-Cahn equations, etc. in \cite{chen}. For the stochastic homogeneous  hydrodynamic  models, for every $t \in [0,T]$ the required convergence $\mathrm{E}\left(\|\mathbf{u}^\varepsilon(t)\|_{H}^2\right)\rightarrow \mathrm{E}\left(\|\mathbf{u}(t)\|_{H}^2\right)$ can be directly deduced from regularity estimates and compactness criterion. The proof of non-homogeneous case is non-trivial compared with \cite{chen}.
  Specifically, the proof relies on the  convergence \begin{align}\label{e2}\mathrm{E}\left(\|\sqrt{\rho^\varepsilon(t)}\mathbf{u}^\varepsilon(t)\|^2_{L^2(\mathcal{O})}\right)\rightarrow \mathrm{E}\left(\|\sqrt{\rho(t)}\mathbf{u}(t)\|^2_{L^2(\mathcal{O})} \right),\end{align} for every $t\in [0,T]$.
  Note that the continuity equation is of transport type and enforces mass conservation. As a transport equation, it is inherently inviscid, and thus one cannot expect the density function to confer any regularizing effect, especially under the physical assumption that the initial density is bounded only in 
$L^\infty(\mathcal{O})$. By exploiting properties of the transport equation, we can only obtain $\rho^\varepsilon(x,t,\omega)\in [m,M]$, for all $(x,t) \in \mathcal{O}_t$ and $\omega\in \Omega$. Then, by the lower and upper-bounds of density, we can derive the estimates $\rho^\varepsilon\mathbf{u}^\varepsilon\in L^p(\Omega; L^\infty(0,T;L^2(\mathcal{O})))$. Hence, unlike \cite{chen} we cannot use the standard compactness criterion such as the Aubin-Lions Lemma to achieve the convergence  due to the limited regularity of $\rho^\varepsilon\mathbf{u}^\varepsilon$. 
  
  We solve the problem by the idea of energy equation. A stochastic version of lower semicontinuity is established firstly, see Lemma \ref{lem5.1}. Based on the result and the energy equation, we show that \begin{align}\label{sup1}\limsup_{\varepsilon\rightarrow 0}\mathrm{E}\left(\|\sqrt{\rho^\varepsilon(t)}\mathbf{u}^\varepsilon(t)\|^2_{L^2(\mathcal{O})}\right)\leq  \mathrm{E}\left(\|\sqrt{\rho(t)}\mathbf{u}(t)\|^2_{L^2(\mathcal{O})}\right).\end{align}
In addition, from the boundedness of $\rho^\varepsilon\mathbf{u}^\varepsilon$ we derive
 $$\liminf_{\varepsilon\rightarrow 0}\mathrm{E}\left(\|\sqrt{\rho^\varepsilon(t)}\mathbf{u}^\varepsilon(t)\|^2_{L^2(\mathcal{O})}\right)\geq  \mathrm{E}\left(\|\sqrt{\rho(t)}\mathbf{u}(t)\|^2_{L^2(\mathcal{O})}\right), $$ which  together with \eqref{sup1} {leads to the desired convergence \eqref{e2}.}
Furthermore, applying the convergence \eqref{e2} and stochastic two-scale convergence, we could strengthen the weak convergence result to strong convergence, as presented in Theorem \ref{thm2}.

The rest of the paper is organized as follows. We introduce some preliminaries including functional spaces and operators, then establish the   a priori uniform estimates and the stochastic compactness in section 2.   In section 3, using two-scale convergence and stochastic compactness we prove the homogenization result by passing to $\varepsilon\rightarrow 0$.
We improve the convergence of $\nabla \mathbf{u}^\varepsilon$ in $L^2( \mathcal{O}_t)$,~weak-$\Sigma$ to the $L^2(\mathcal{O}_t)$, strong-$\Sigma$ in section 4, while a stochastic version of lower semicontinuity is established.

\section{Uniform estimates and stochastic compactness}

In this section, we establish the a priori uniform estimates and the stochastic compactness used for studying the homogenization property. We begin by recalling some preliminaries, including functional spaces, operators, and stochastic setting, which will be used in the sequel.

{\bf Functional spaces and operators}. For any $k\in \mathbb{N}^+, p\geq 1$, denote by $W^{k, p}(\mathcal{O})$ the Sobolev spaces of functions having distributional derivatives up to order $k\in \mathbb{N}^{+}$, which is integrable in $L^{p}(\mathcal{O})$. We denote by $W^{-k, p'}(\mathcal{O})$ the dual of $W^{k, p}(\mathcal{O})$, $p'$ is the conjugate index of $p$, and $H^1(\mathcal{O})=W^{1,2}(\mathcal{O})$.
Denote by $C_0^\infty(\mathcal{O})$ the space of all $\mathbb{R}^d$-valued functions of class $C^\infty(\mathcal{O})$ with compact supports contained in $\mathcal{O}$.
Let
$$C_{0, div}^\infty(\mathcal{O})=\{\mathbf{u}\in C_0^\infty(\mathcal{O}); {\rm div} \mathbf{u}=0\}.$$
Define by $H$ the closure of $C_{0, div}^\infty(\mathcal{O})$ in $L^2(\mathcal{O})$-norm, $V$ the closure of $C_{0, div}^\infty(\mathcal{O})$ in $H^1(\mathcal{O})$-norm, endowed with the $L^2(\mathcal{O})$-norm, $H^1(\mathcal{O})$-norm respectively.
Denote by $V'$ the dual of space $V$, then these spaces satisfy
the Gelfand inclusions $V\subset H\subset V'$. We denote by $(\cdot, \cdot), \|\cdot\|_{H}, \|\cdot\|_{V}$ the inner product of $L^2(\mathcal{O})$ and the norms $H, V$. The duality product between $V, V'$ is denoted by $(\cdot, \cdot)_{V\times V'}$. 

For the oscillation diffusion term,  we understand $A^\varepsilon$ as the bounded operator from $V$ into $V'$ with the duality product
$$(A^\varepsilon \mathbf{u}, \mathbf{v})_{V'\times V}=\sum_{i,j=1}^{d}\left(a^\varepsilon_{i,j}\frac{\partial \mathbf{u}}{\partial x_j}, \frac{\partial \mathbf{v}}{\partial x_i}\right), ~{\rm for}~\mathbf{u}, \mathbf{v}\in V.$$
Since the embedding $V$ into $H$ is compact, it follows that for every $\varepsilon\in (0,1)$, $(A^\varepsilon)^{-1}$ as a map from $H$ into $V$ is compact on $H$. From the symmetry and the compactness of the operator, we have the existence of a complete orthonormal basis $\{\mathbf{e}_{k}\}_{k\geq 1}$ for $H$ of eigenfunctions of $A^\varepsilon$. 

Let $X$ be a Banach space, for any $0<\alpha<1, p\geq 1$, denote by
 \begin{equation*}
W^{\alpha,p}(0,T;X):=\left\{\mathbf{u}\in L^{p}(0,T;X):\int_{0}^{T}\int_{0}^{T}\frac{\|\mathbf{u}(t)-\mathbf{u}(s)\|^p_{X}}{|t-s|^{\alpha p+1}}dsdt<\infty\right\},
\end{equation*}
where the norm is defined 
by $$\|\mathbf{u}\|^p_{W^{\alpha,p}(0,T;X)}=\int_{0}^{T}\|\mathbf{u}(t)\|_{X}^pdt+\int_{0}^{T}\int_{0}^{T}\frac{\|\mathbf{u}(t)-\mathbf{u}(s)\|^p_{X}}{|t-s|^{\alpha p+1}}dsdt.$$
If $\alpha=1$, denote by
 \begin{equation*}
W^{1,p}(0,T;X):=\left\{\mathbf{u}\in L^{p}(0,T;X):\frac{d\mathbf{u}}{dt}\in L^{p}(0,T;X)\right\},
\end{equation*}
which is the classical Sobolev space with its usual norm
\begin{equation*}
\|\mathbf{u}\|_{W^{1,p}(0,T;X)}^{p}=\int_{0}^{T}\left(\|\mathbf{u}(t)\|_{X}^{p}+\left\|\frac{d\mathbf{u}(t)}{dt}\right\|_{X}^{p}\right)dt.
\end{equation*}

{\bf Stochastic setting}. Denote by $L^p(\Omega; L^q(0,T;X)), p\in [1,\infty], q\in [1, \infty]$ the space of processes with values in $X$ defined on $\Omega\times [0,T]$ such that

i. $\mathbf{u}$ is measurable with respect to $(\omega, t)$, and for each $t\geq 0$, $\mathbf{u}(t)$ is $\mathcal{F}_{t}$-measurable;

ii. For almost all $(\omega, t)\in \Omega\times [0,T]$, $\mathbf{u}\in X$ and
\begin{align*}
\|\mathbf{u}\|^p_{L^p(\Omega; L^q(0,T;X))}\!=\!
\begin{cases}
\mathrm{E}\left(\int_{0}^{T}\|\mathbf{u}(t)\|_{X}^qdt\right)^\frac{p}{q},~& {\rm if}~q\in [1,\infty),\\\\
\mathrm{E}\left(\sup_{t\in[0,T]}\|\mathbf{u}(t)\|^p_X\right),~ & {\rm if}~q=\infty.
\end{cases}
\end{align*}
 If $p=\infty$, denote
$$L^\infty(\Omega; L^q(0,T; X)):=\inf\left\{\zeta; \mathrm{P}(L^q(0,T;X)> \zeta)=0\right\}.$$
Here,
\[
\mathrm{P}(L^q(0,T; X) > \zeta) = 0
\]
means that \( \rho: \Omega \to L^q(0,T; X) \) is essentially bounded.

We recall the following well-known Burkholder-Davis-Gundy inequality to control the martingale part: for any $g\in  L^{p}(\Omega;L^{2}_{loc}([0,\infty); L_{2}(H; X)))$, there exists a constant $c_p>0$ such that
\begin{eqnarray*}
\mathrm{E}\left(\sup_{t\in [0,T]}\left\|\int_{0}^{t}g(s) dW(s)\right\|_{X}^{p}\right)\leq c_{p}\mathrm{E}\left(\int_{0}^{T}\sum_{k\geq 1}\|g(t)Q^\frac{1}{2}\mathbf{e}_k\|_{X}^{2}dt\right)^{\frac{p}{2}},
\end{eqnarray*}
for any $p\in[1,\infty)$, see also \cite[Theorem 4.36]{Zabczyk}.

	The existence of a martingale weak solution is given by \cite{cww} for \( d = 3 \). Here, using the Galerkin approximate method, we also have the following existence result for \( d = 2, 3 \).
\begin{proposition}\label{lem2.1*}Assume that the assumptions \((\mathbf{A.i})\), \(\mathbf{i} = 1, 2, 3, 4\) hold and initial data \(\mathbf{u}_0 \in H\), \(0 < m \leq \rho_0 \leq M<\infty\). Then, for every \(\varepsilon\in (0,1)\), \(T > 0\), and \(d = 2, 3\), there exists a global martingale weak solution of the equations \eqref{equ1} in the following sense:
	
i. $(\Omega,\mathcal{F},\{\mathcal{F}_{t}\}_{t\geq0},\mathrm{P}, W)$ is a filtered probability space with a filtration $\{\mathcal{F}_{t}\}_{t\geq0}$, $W$ is a $Q$-Wiener process adapted to filtration $\{\mathcal{F}_{t}\}_{t\geq0}$.

ii. $\mathbf{u}^\varepsilon$ is $H$-valued $\mathcal{F}_t$-progressively measurable process with the regularity
$$\mathbf{u}^\varepsilon\in L^p(\Omega; L^\infty(0,T;H)\cap L^2(0,T;V))$$
for any $p\geq 2$, $\rho^\varepsilon$ is $L^\infty(\mathcal{O})$-valued $\mathcal{F}_t$-progressively measurable process with the regularity
$$\rho^\varepsilon\in L^\infty(\Omega\times \mathcal{O}_t).$$
Moreover, we have that $\rho^\varepsilon\mathbf{u}^\varepsilon$ is $L^2(\mathcal{O})$-valued $\mathcal{F}_t$-progressively measurable process with the regularity
$$\rho^\varepsilon\mathbf{u}^\varepsilon\in L^p(\Omega; L^\infty(0,T;L^2(\mathcal{O}))).$$

iii. For any $t\in [0,T]$, $\phi\in H^1(\mathcal{O}), \varphi\in V$, it holds $\mathrm{P}$ a.s.
$$(\rho^{\varepsilon}(t), \phi)-(\rho_0, \phi)-\int_{0}^{t}(\rho^\varepsilon(s) \mathbf{u}^{\varepsilon}(s), \nabla\phi)ds=0,$$
and
\begin{align}\label{2.1*}
&(\rho^\varepsilon(t)\mathbf{u}^\varepsilon(t), \varphi)=(\rho_0\mathbf{u}_0, \varphi)-\int_0^t(A^\varepsilon \mathbf{u}^\varepsilon(s), \varphi)_{ V'\times V}ds+\int_0^t(\rho^\varepsilon(s)\mathbf{u}^\varepsilon(s)\otimes \mathbf{u}^\varepsilon(s), \nabla\varphi)ds\nonumber\\
&\qquad\qquad\qquad+\int_{0}^{t}(f^\varepsilon( \mathbf{u}^{\varepsilon}(s)), \varphi)ds+\int_{0}^{t}(g(\mathbf{u}^{\varepsilon}(s))dW, \varphi). 
\end{align}

\end{proposition}

\subsection{A priori unform estimates}
	
In this subsection, we establish the uniform temporal and spatial a priori regularity estimates of solutions $\rho^\varepsilon, \mathbf{u}^\varepsilon, \rho^\varepsilon\mathbf{u}^\varepsilon$ in $\varepsilon$. Then, using the a priori regularity estimates, we will derive the tightness of a sequence of measures induced by the distributions of these solutions. 	

\begin{lemma}\label{lem3.1*} If the initial density $\rho_0$ satisfies $0<m\leq \rho_0\leq M<\infty$, then the sequence of solutions $\{\rho^\varepsilon\}_{\varepsilon\in (0,1)}$ in  equations \eqref{equ1}  has the following uniform estimates of $\varepsilon$
$$0<m\leq \rho^\varepsilon(x,t,\omega)\leq M<\infty,$$
for any $(x,t)\in \mathcal{O}_t, \omega\in \Omega$.
\end{lemma}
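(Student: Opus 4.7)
My plan is to reduce the claim to a pathwise deterministic statement and then apply the renormalization technique of DiPerna--Lions to the continuity equation. Since the mass equation $\partial_t\rho^\varepsilon+\Dv(\rho^\varepsilon\u^\varepsilon)=0$ carries no stochastic forcing, for almost every $\omega\in\Omega$ (outside the null set on which Proposition \ref{lem2.1*} fails) I view it as a deterministic linear transport equation with the velocity field $\u^\varepsilon(\cdot,\cdot,\omega)\in L^2(0,T;V)$. Because $\Dv\u^\varepsilon=0$, it may be rewritten in non-conservative form $\partial_t\rho^\varepsilon+\u^\varepsilon\cdot\na\rho^\varepsilon=0$, which is the starting point for a maximum-principle argument.

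The key step is to show that the equation is renormalizable, i.e., that for any $\beta\in C^1(\R)\cap W^{1,\infty}(\R)$ one has
\begin{equation*}
\partial_t\beta(\rho^\varepsilon)+\Dv\bigl(\beta(\rho^\varepsilon)\u^\varepsilon\bigr)=0
\qquad\text{in } \mathcal{D}'(\mathcal{O}_t).
\end{equation*}
For this I would regularize by a spatial mollification $\rho^\varepsilon_\delta=\rho^\varepsilon\ast\eta_\delta$, obtaining a commutator $r_\delta=\u^\varepsilon\cdot\na\rho^\varepsilon_\delta-(\u^\varepsilon\cdot\na\rho^\varepsilon)\ast\eta_\delta$, and invoke the standard DiPerna--Lions commutator lemma (applicable because $\u^\varepsilon\in L^2(0,T;H^1)$ and $\rho^\varepsilon\in L^\infty$) to conclude $r_\delta\to 0$ in $L^1_{loc}$ as $\delta\to 0$. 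Passing to the limit yields the renormalized identity.

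Next I would choose the two convex test functions $\beta_M(r)=(r-M)_+^2$ and $\beta_m(r)=(m-r)_+^2$. Integrating the renormalized equation over $\mathcal{O}$ and exploiting $\u^\varepsilon|_{\partial\mathcal{O}}=0$ together with $\Dv\u^\varepsilon=0$ kills the convective term:
\begin{equation*}
\f{d}{dt}\int_{\mathcal{O}}\beta_{*}(\rho^\varepsilon(t,x))\,dx=0, \qquad *\in\{m,M\}.
\end{equation*}
Since $m\le\rho_0\le M$ implies $\beta_m(\rho_0)=\beta_M(\rho_0)=0$, I conclude
$\int_{\mathcal{O}}(\rho^\varepsilon(t)-M)_+^2\,dx=\int_{\mathcal{O}}(m-\rho^\varepsilon(t))_+^2\,dx=0$
for every $t\in[0,T]$, which gives $m\le\rho^\varepsilon\le M$ almost everywhere on $\mathcal{O}_t$; selecting the canonical representative yields the pointwise statement in the lemma, and the bound is independent of $\varepsilon$ because $m,M$ only depend on $\rho_0$.

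The main obstacle I anticipate is the renormalization step, because $\u^\varepsilon$ has only $L^2_tH^1_x$ regularity rather than the $W^{1,1}_{loc}$ needed for classical characteristics; the commutator estimate and the verification that the boundary term vanishes despite the limited Sobolev regularity of $\rho^\varepsilon\u^\varepsilon$ are the delicate points. The stochastic framework itself contributes no additional difficulty, since everything can be handled $\omega$-by-$\omega$ once a full-measure set on which $\u^\varepsilon(\cdot,\cdot,\omega)\in L^2(0,T;V)$ and $\rho^\varepsilon(\cdot,\cdot,\omega)\in L^\infty(\mathcal{O}_t)$ has been fixed from Proposition \ref{lem2.1*}.
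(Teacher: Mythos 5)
Your proposal is correct and follows essentially the same route as the paper: the paper simply observes that the mass equation is a divergence-free transport equation and cites the DiPerna--Lions theory \cite{Lions1989} for the preservation of the bounds $m\leq\rho^\varepsilon\leq M$, which is exactly the renormalization-plus-convex-test-function argument you spell out (modulo the harmless point that $(r-M)_+^2$ must be truncated outside a large ball to lie in $W^{1,\infty}(\R)$, which is legitimate since $\rho^\varepsilon\in L^\infty(\mathcal{O}_t)$ a priori).
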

\begin{proof}
	Since the continuity equation is a type of transport equation, hence the solution $\rho^\varepsilon$ shares the same regularity with initial density $\rho_0$ from \cite{Lions1989},
thus for any $(x,t)\in \mathcal{O}_t, \omega\in \Omega,$ we obtain
	$$
	0<m\leq \rho^\varepsilon(x,t, \omega)\leq M<\infty,$$
 uniformly in $\varepsilon$.
\end{proof}

\begin{lemma}\label{lem3.1} If $(\mathbf{A.2})$, $(\mathbf{A.4})$ hold and \(\mathbf{u}_0 \in H\), \(0 < m \leq \rho_0 \leq M<\infty\), then for any $T>0$, the sequence of solutions $\{\mathbf{u}^\varepsilon, \rho^\varepsilon\mathbf{u}^\varepsilon\}_{\varepsilon\in (0,1)}$ has the following uniform estimates of $\varepsilon$
	\begin{align}\label{3.1}
		\mathrm{E}\left(\sup_{0\leq t\leq T}\|\sqrt{\rho^{\varepsilon}(t)}\mathbf{u}^\varepsilon(t)\|_{L^2(\mathcal{O})}^2\right)
		+\mathrm{E}\left(\int_{0}^{T}\|\nabla \mathbf{u}^\varepsilon(t)\|_{L^2(\mathcal{O})}^2dt\right)\lesssim_{m, c_2, c_4,T, \kappa, \rho_0, \mathbf{u}_0}  C,
	\end{align}
	and for any $p\geq 2$
	\begin{align}\label{3.2}
		\mathrm{E}\left(\sup_{0\leq t\leq T}\|\sqrt{\rho^{\varepsilon}(t)}\mathbf{u}^\varepsilon(t)\|_{L^2(\mathcal{O})}^p\right)+\mathrm{E}\left(\int_{0}^{T}\|\mathbf{u}^\varepsilon(t)\|_H^{p-2}\|\nabla \mathbf{u}^\varepsilon(t)\|_{L^2(\mathcal{O})}^2dt\right)\lesssim_{m, p, c_2, c_4, T, \kappa, \rho_0, \mathbf{u}_0}  C,
	\end{align}
and
\begin{equation}\label{prior3.45}
	\mathrm{E} \left( \int_0^{T} \| \nabla \mathbf{u}^{\varepsilon}(t) \|^2_{L^2(\mathcal{O})} \, dt \right)^p\lesssim_{m,p, c_2, c_4, T, \kappa, \rho_0, \mathbf{u}_0}  C,
	\end{equation}
	where the positive constant $C$ is independent of  $\varepsilon$. Furthermore, we have

$$
\mathrm{E}\left(\sup_{0\leq t\leq T}\|\rho^{\varepsilon}(t)\mathbf{u}^\varepsilon(t)\|_{L^2(\mathcal{O})}^p\right)+\mathrm{E}\left(\sup_{0\leq t\leq T}\|\mathbf{u}^\varepsilon(t)\|_{H}^p\right)\lesssim_{m, p, c_2, c_4, T, \kappa, \rho_0, \mathbf{u}_0}  C,
$$
where the positive constant $C$ is independent of  $\varepsilon$.
\end{lemma}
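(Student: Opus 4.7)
The plan is to run a stochastic energy argument on $\|\sqrt{\rho^\varepsilon}\mathbf{u}^\varepsilon\|_{L^2(\mathcal{O})}^2$, exploiting the transport structure of the continuity equation to kill the nonlinear advection term, then promote the first-moment bound to the $p$-th moment bound by applying It\^o's formula to $x\mapsto x^{p/2}$. All estimates would be justified at the Galerkin level used in Proposition~\ref{lem2.1*} and then passed to the limit; Lemma~\ref{lem3.1*} supplies the crucial two-sided bound $m\le \rho^\varepsilon\le M$ that lets us pass freely between $\|\sqrt{\rho^\varepsilon}\mathbf{u}^\varepsilon\|_{L^2(\mathcal{O})}$ and $\|\mathbf{u}^\varepsilon\|_H$.

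First I would combine the continuity equation with \eqref{2.1*} to derive the energy identity
\begin{align*}
\|\sqrt{\rho^\varepsilon(t)}\mathbf{u}^\varepsilon(t)\|_{L^2(\mathcal{O})}^2
&=\|\sqrt{\rho_0}\mathbf{u}_0\|_{L^2(\mathcal{O})}^2
-2\int_0^t (A^\varepsilon\mathbf{u}^\varepsilon,\mathbf{u}^\varepsilon)_{V'\times V}\,ds\\
&\quad+2\int_0^t(f^\varepsilon(\mathbf{u}^\varepsilon),\mathbf{u}^\varepsilon)\,ds
+\int_0^t\|g(\mathbf{u}^\varepsilon)\|_{L_2(H;H)}^2\,ds
+2\int_0^t(g(\mathbf{u}^\varepsilon)\,dW,\mathbf{u}^\varepsilon),
\end{align*}
the cancellation of $\int\partial_t\rho^\varepsilon|\mathbf{u}^\varepsilon|^2/2$ against the transported kinetic energy being the standard computation for non-homogeneous fluids. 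The ellipticity \eqref{1.2} gives $(A^\varepsilon\mathbf{u}^\varepsilon,\mathbf{u}^\varepsilon)_{V'\times V}\ge \kappa\|\nabla\mathbf{u}^\varepsilon\|_{L^2(\mathcal{O})}^2$, the assumptions $(\mathbf{A.2})$ and $(\mathbf{A.4})$ control the drift and diffusion by $C(1+\|\mathbf{u}^\varepsilon\|_H^2)$, and the lower bound $\rho^\varepsilon\ge m$ converts $\|\mathbf{u}^\varepsilon\|_H^2\le \tfrac{1}{m}\|\sqrt{\rho^\varepsilon}\mathbf{u}^\varepsilon\|_{L^2(\mathcal{O})}^2$. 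Taking $\sup_{t\le T}$ and then $\mathrm{E}$, handling the stochastic integral by the Burkholder--Davis--Gundy inequality and absorbing the resulting $\tfrac12\mathrm{E}\sup_{s\le t}\|\sqrt{\rho^\varepsilon}\mathbf{u}^\varepsilon\|_{L^2(\mathcal{O})}^2$ term on the left, Gronwall's inequality produces \eqref{3.1}.

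For \eqref{3.2}, I would apply It\^o's formula to $\Phi(x)=x^{p/2}$ with $x=\|\sqrt{\rho^\varepsilon(t)}\mathbf{u}^\varepsilon(t)\|_{L^2(\mathcal{O})}^2$. The deterministic terms produce a factor $\tfrac{p}{2}x^{p/2-1}$ in front of $-2\kappa\|\nabla\mathbf{u}^\varepsilon\|_{L^2(\mathcal{O})}^2$, which via $\rho^\varepsilon\le M$ reads as a lower bound of the form $c_{p,M,\kappa}\|\mathbf{u}^\varepsilon\|_H^{p-2}\|\nabla\mathbf{u}^\varepsilon\|_{L^2(\mathcal{O})}^2$, giving the desired integral term in \eqref{3.2}. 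The It\^o correction $\tfrac{p}{2}(\tfrac{p}{2}-1)x^{p/2-2}\|g(\mathbf{u}^\varepsilon)^*(\sqrt{\rho^\varepsilon}\mathbf{u}^\varepsilon)\|^2$ is bounded by $C x^{p/2-1}(1+\|\mathbf{u}^\varepsilon\|_H^2)$, hence by $C(1+x^{p/2})$; after BDG and another absorption argument, Gronwall gives the $p$-th moment bound. Finally, for \eqref{prior3.45} I would not iterate but instead rewrite the energy identity as
\begin{equation*}
2\kappa\int_0^{T}\!\|\nabla\mathbf{u}^\varepsilon\|_{L^2(\mathcal{O})}^2ds
\le \|\sqrt{\rho_0}\mathbf{u}_0\|_{L^2(\mathcal{O})}^2
+C\int_0^T(1+\|\mathbf{u}^\varepsilon\|_H^2)ds
+2\sup_{t\le T}\Big|\int_0^t(g(\mathbf{u}^\varepsilon)\,dW,\mathbf{u}^\varepsilon)\Big|,
\end{equation*}
raise to the $p$-th power, take expectation, and bound the stochastic supremum by BDG and the already proved \eqref{3.2}. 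The two regularity statements $\rho^\varepsilon\mathbf{u}^\varepsilon\in L^p(\Omega;L^\infty(0,T;L^2(\mathcal{O})))$ and $\mathbf{u}^\varepsilon\in L^p(\Omega;L^\infty(0,T;H))$ then follow at once from Lemma~\ref{lem3.1*} combined with \eqref{3.2}.

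The main obstacle I expect is the rigorous derivation of the energy identity, since $\rho^\varepsilon$ is only $L^\infty$ and $\mathbf{u}^\varepsilon$ has only $V$-regularity, so a direct application of the infinite-dimensional It\^o formula to $\|\sqrt{\rho^\varepsilon}\mathbf{u}^\varepsilon\|_{L^2(\mathcal{O})}^2$ is not immediate. I would circumvent this by performing the computation on the finite-dimensional Galerkin system of \cite{cww} where $\rho^\varepsilon$ is smooth along the flow of the approximate velocity, obtaining all bounds uniformly in the Galerkin parameter, and then passing to the limit in $\varepsilon$ after invoking Proposition~\ref{lem2.1*}. The cancellation of the advection term, which is purely algebraic at the Galerkin level, is preserved in the limit and is the mechanism that makes the estimates $\varepsilon$-independent.
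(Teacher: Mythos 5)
Your proposal follows essentially the same route as the paper: the energy identity for $\|\sqrt{\rho^\varepsilon}\mathbf{u}^\varepsilon\|_{L^2(\mathcal{O})}^2$ with the advection term cancelled against the transport term via the continuity equation and incompressibility, then ellipticity, $(\mathbf{A.2})$, $(\mathbf{A.4})$, Burkholder--Davis--Gundy, absorption and Gronwall for \eqref{3.1}; It\^o's formula applied to the $p/2$-power for \eqref{3.2}; and raising the first-order energy inequality to the $p$-th power for \eqref{prior3.45}. The only addition is your explicit remark that the identity should be justified at the Galerkin level, which the paper leaves implicit; this is a sound refinement rather than a different argument.
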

\begin{proof}
	Since
$$
\frac{d}{dt} \int_\mathcal{O} |\sqrt{\rho^{\varepsilon}(t)} \mathbf{u}^{\varepsilon}(t)|^2 \, dx=\int_\mathcal{O} \mathbf{u}^{\varepsilon}(t) \frac{d(\rho^{\varepsilon}(t) \mathbf{u}^{\varepsilon}(t))}{dt} \, dx+\int_\mathcal{O} \rho^{\varepsilon}(t) \mathbf{u}^{\varepsilon}(t)\frac{d\mathbf{u}^{\varepsilon}(t)}{dt} \, dx
$$
$$
=\int_\mathcal{O} \mathbf{u}^{\varepsilon}(t) \frac{d(\rho^{\varepsilon}(t) \mathbf{u}^{\varepsilon}(t))}{dt} \, dx+\frac{1}{2}\int_\mathcal{O} \rho^{\varepsilon}(t) \frac{d|\mathbf{u}^{\varepsilon}(t)|^2}{dt} \, dx
$$
$$
=\int_\mathcal{O} \mathbf{u}^{\varepsilon}(t) \frac{d(\rho^{\varepsilon}(t) \mathbf{u}^{\varepsilon}(t))}{dt}\, dx+\frac{1}{2}\frac{d}{dt}\int_\mathcal{O} |\sqrt{\rho^{\varepsilon}(t)} \mathbf{u}^{\varepsilon}(t)|^2 \, dx-\frac{1}{2}\int_{\mathcal{O}}|\mathbf{u}^{\varepsilon}(t)|^2 \frac{d\rho^\varepsilon(t)}{dt} dx.
$$
Hence, we obtain
$$
\frac{d}{dt} \int_\mathcal{O} |\sqrt{\rho^{\varepsilon}(t)} \mathbf{u}^{\varepsilon}(t)|^2 \, dx=2\int_\mathcal{O} \mathbf{u}^{\varepsilon}(t) \frac{d(\rho^{\varepsilon}(t) \mathbf{u}^{\varepsilon}(t))}{dt} \, dx-\int_{\mathcal{O}}|\mathbf{u}^{\varepsilon}(t)|^2 \frac{d\rho^\varepsilon(t)}{dt} dx,
$$
which along with It\^ o's formula gives
	\begin{align}\label{priori2.12}
		&d \int_\mathcal{O} |\sqrt{\rho^{\varepsilon}(s)} \mathbf{u}^{\varepsilon}(s)|^2 \, dx\nonumber\\
 &= -\int_\mathcal{O}\mathbf{u}^{\varepsilon}(s) \mathbf{u}^{\varepsilon}(s) \frac{\partial \rho^{\varepsilon}(s)}{\partial s} \, dx  ds - 2(A^\varepsilon \mathbf{u}^\varepsilon(s), \mathbf{u}^\varepsilon(s))_{V'\times V}ds \nonumber \\
		&\quad + 2 \int_\mathcal{O} (\rho^{\varepsilon}(s) \mathbf{u}^{\varepsilon}(s) \otimes \mathbf{u}^{\varepsilon}(s)) : \nabla \mathbf{u}^{\varepsilon}(s) \, dx ds  + 2 \int_\mathcal{O}  \mathbf{u}^{\varepsilon}(s) f\left(\frac{x}{\varepsilon}, s, \mathbf{u}^{\varepsilon}(s)\right) \, dxds\nonumber \\
		&\quad + 2\sum_{k\geq 1}\int_\mathcal{O}  \mathbf{u}^{\varepsilon}(s) g(\mathbf{u}^{\varepsilon}(s))Q^\frac{1}{2}\mathbf{e}_k \, dx dW_k+ \| g(\mathbf{u}^{\varepsilon}(s))\|^2_{L_2(H;H)} ds.
	\end{align}
	  We can infer that
	\begin{equation}\label{3.4}
		0 = \int_\mathcal{O} \text{div}(\mathbf{u}^{\varepsilon}(s) \mathbf{u}^{\varepsilon} (s)\rho^{\varepsilon} (s)\mathbf{u}^{\varepsilon}(s)) \, dx = \int_\mathcal{O} \left[\mathbf{u}^{\varepsilon}(s) \mathbf{u}^{\varepsilon}(s) \text{div}(\rho^{\varepsilon}(s) \mathbf{u}^{\varepsilon}(s)) + \rho^{\varepsilon} (s)\mathbf{u}^{\varepsilon} (s) \nabla(\mathbf{u}^{\varepsilon}(s))^2\right] \, dx.
	\end{equation}
The first equality follows from the Dirichlet boundary condition of $\mathbf{u}^{\varepsilon}$. By the incompressible condition \( \text{div} \, \mathbf{u}^{\varepsilon} = 0 \), it follows from the continuity equation and \eqref{3.4} that
		$$-\int_\mathcal{O} \mathbf{u}^{\varepsilon}(s) \mathbf{u}^{\varepsilon}(s) \frac{\partial \rho^{\varepsilon}(s)}{\partial s} \, dx 
= \int_\mathcal{O} \mathbf{u}^{\varepsilon}(s) \mathbf{u}^{\varepsilon}(s) (\mathbf{u}^{\varepsilon}(s) \cdot \nabla) \rho^{\varepsilon}(s) \, dx $$
\begin{equation}\label{priori2.14}
= -2 \int_\mathcal{O} \rho^{\varepsilon}(s) \mathbf{u}^{\varepsilon}(s) (\mathbf{u}^{\varepsilon}(s) \cdot \nabla) \mathbf{u}^{\varepsilon}(s)dx = -2 \int_\mathcal{O} (\rho^{\varepsilon}(s) \mathbf{u}^{\varepsilon}(s) \otimes \mathbf{u}^{\varepsilon}(s)) : \nabla \mathbf{u}^{\varepsilon}(s) \, dx.
	\end{equation}
The uniform ellipticity condition  \eqref{1.2} of operator \( A^\varepsilon \)
 leads to
\begin{align}\label{1.6}
	-2(A^\varepsilon \mathbf{u}^\varepsilon(s), \mathbf{u}^\varepsilon(s))_{V'\times V}=-2\sum_{i,j=1}^d\int_{\mathcal{O}}a^\varepsilon_{i,j}\partial_{x_i}\mathbf{u}^\varepsilon (s) \partial_{x_j}\mathbf{u}^\varepsilon (s)dx
\leq -2\kappa \|\nabla \mathbf{u}^\varepsilon(s)\|_{L^2(\mathcal{O})}^2.
\end{align}
Using  $(\mathbf{A.2})$, $(\mathbf{A.4})$, we see
\begin{align}\label{3.11}
2\int_\mathcal{O}  \mathbf{u}^{\varepsilon}(s) f\left(\frac{x}{\varepsilon}, s, \mathbf{u}^{\varepsilon}(s)\right) \, dx \leq c_2(1+3\|\mathbf{u}^{\varepsilon}(s)\|^2_{H})\leq c_2\left(1+\frac{3\|\sqrt{\rho^{\varepsilon}(s)}\mathbf{u}^{\varepsilon}(s)\|^2_{L^2(\mathcal{O})}}{m}\right),
\end{align}
and
\begin{align}\label{3.12}
\| g(\mathbf{u}^{\varepsilon}(s))\|^2_{L_2(H;H)}\leq c_4(1+\|\mathbf{u}^\varepsilon(s)\|_{H}^2)\leq c_4\left(1+\frac{\|\sqrt{\rho^{\varepsilon}(s)}\mathbf{u}^{\varepsilon}(s)\|^2_{L^2(\mathcal{O})}}{m}\right).
\end{align}
By \eqref{priori2.12} and \eqref{priori2.14}-\eqref{3.12}, we obtain  for all \( s \in [0, t] \)
	$$
		 \| \sqrt{\rho^{\varepsilon}(s)} \mathbf{u}^{\varepsilon}(s) \|^2_{L^2(\mathcal{O})} + 2{\kappa} \int_0^s \| \nabla \mathbf{u}^{\varepsilon}(r) \|^2_{L^2(\mathcal{O})} \, dr
		\leq  \| \sqrt{\rho_0} \mathbf{u}_0 \|^2_{L^2(\mathcal{O})}  $$
\begin{eqnarray}\label{prior3.15}
		+ C\int_{0}^{s}\left(1+\|\sqrt{\rho^{\varepsilon}(r)}\mathbf{u}^{\varepsilon}(r)\|^2_{L^2(\mathcal{O})}\right)dr+ 2\int_{0}^{s} (g(\mathbf{u}^{\varepsilon}(r)) \,  dW, \mathbf{u}^{\varepsilon}(r)),
	\end{eqnarray}
where $C=C(m)>0$. Taking the supremum of time over the interval \( [0, t] \) on both sides of \eqref{prior3.15}, and then applying the expectation, we arrive at
	\begin{align}\label{priori3.16}
		&\mathrm{E} \left(\sup_{0 \leq s \leq t} \| \sqrt{\rho^{\varepsilon}(s)} \mathbf{u}^{\varepsilon}(s) \|^2_{L^2(\mathcal{O})}\right) + {2}{\kappa} \mathrm{E}\left( \int_0^{t} \| \nabla \mathbf{u}^{\varepsilon}(s) \|^2_{L^2(\mathcal{O})} \, ds\right) \nonumber \\
		&\leq \| \sqrt{\rho_0} \mathbf{u}_0 \|^2_{L^2(\mathcal{O})} +
 C\mathrm{E}\left(\int_{0}^{t}\left(1+\|\sqrt{\rho^{\varepsilon}(s)}\mathbf{u}^{\varepsilon}(s)\|^2_{L^2(\mathcal{O})}\right)ds\right) \nonumber \\
		&\quad + 2 \mathrm{E} \left(\sup_{0 \leq s \leq t} \left|\int_{0}^{s} (g(\mathbf{u}^{\varepsilon}(r)) \,  dW, \mathbf{u}^{\varepsilon}(r))\right|\right).
	\end{align}
 The $(\mathbf{A.4})$ combined with the Burkholder-Davis-Gundy inequality imply
	$$
		2 \mathrm{E} \left(\sup_{0 \leq s \leq t} \left|\int_{0}^{s} (g(\mathbf{u}^{\varepsilon}(r)) \,  dW, \mathbf{u}^{\varepsilon}(r))\right|\right)$$
$$
 \leq C\mathrm{E} \left( \int_0^{t}  \sum_{k\geq 1}(g( \mathbf{u}^{\varepsilon}(s))Q^\frac{1}{2}\mathbf{e}_k, \mathbf{u}^{\varepsilon}(s) )^2 \, ds \right)^{\frac{1}{2}} $$
		$$\leq C\mathrm{E} \left( \int_0^{t}  c_4(1 + \| \mathbf{u}^{\varepsilon}(s) \|^2_{H}) \|  \mathbf{u}^{\varepsilon}(s) \|^2_{H} \, ds \right)^{\frac{1}{2}} $$
		$$\leq C\sqrt{c_4}\mathrm{E}  \left( \sup_{0 \leq s \leq t} \|  \mathbf{u}^{\varepsilon}(s) \|_{H}\int_0^{t}  (1 + \| \mathbf{u}^{\varepsilon}(s) \|^2_{H}) \, ds \right)^{\frac{1}{2}}$$
\begin{align}\label{3.13}
		&\leq \frac{1}{2}\mathrm{E}\left(\sup_{0 \leq s \leq t} \|  \sqrt{\rho^\varepsilon(s)} \mathbf{u}^{\varepsilon}(s) \|^2_{L^2(\mathcal{O})}\right) + C(m, c_4)\mathrm{E} \left(\int_0^{t} \left(1 + \| \sqrt{\rho^\varepsilon(s)} \mathbf{u}^{\varepsilon}(s) \|^2_{L^2(\mathcal{O})}\right) \, ds\right).
	\end{align}
	Substituting estimate \eqref{3.13} into inequality \eqref{priori3.16}, we have for every $t\in [0,T]$
	\begin{align}
		\mathrm{E}\left( \sup_{0 \leq s \leq t} \left\| \sqrt{\rho^{\varepsilon}(s)} \mathbf{u}^{\varepsilon}(s) \right\|^2_{L^2(\mathcal{O})} \right)+ {4}{\kappa} \mathrm{E} \left(\int_0^{t} \|\nabla \mathbf{u}^{\varepsilon}(s)\|^2_{L^2(\mathcal{O})} \, ds\right) \nonumber\\
		\leq  2\left\| \sqrt{\rho_0} \mathbf{u}_0 \right\|^2_{L^2(\mathcal{O})} + C(m,c_2,c_4)\mathrm{E} \left(\int_0^{t} \left(1 + \| \sqrt{\rho^\varepsilon(s)} \mathbf{u}^{\varepsilon}(s) \|^2_{L^2(\mathcal{O})}\right) \, ds\right).
	\end{align}
	By Gronwall's inequality, we have for every $t\in [0,T]$
	\begin{equation}\label{prior3.28}
		\mathrm{E} \left(\sup_{0 \leq s \leq t} \left\| \sqrt{\rho^{\varepsilon}(s)} \mathbf{u}^{\varepsilon}(s) \right\|^2_{L^2(\mathcal{O})}\right) +  \mathrm{E} \left(\int_0^{t} \|\nabla \mathbf{u}^{\varepsilon}(s)\|^2_{L^2(\mathcal{O})} \, ds\right) \lesssim_{m,\kappa, c_2, c_4, T, \rho_0,\mathbf{u}_0 } C.
	\end{equation}

	Applying It\^ o's formula for \( p \geq 2 \), integrating of time over \( [0, s] \)  we have
	\begin{eqnarray}\label{prior3.30}
		&& \| \sqrt{\rho^{\varepsilon}(s)} \mathbf{u}^{\varepsilon}(s) \|^p_{L^2(\mathcal{O})} + p\int_0^s \| \sqrt{\rho^{\varepsilon}(r)} \mathbf{u}^{\varepsilon}(r) \|^{p-2}_{L^2(\mathcal{O})} (A^\varepsilon \mathbf{u}^\varepsilon(r), \mathbf{u}^\varepsilon(r))_{V'\times V} \, dr \nonumber\\
		&&= \| \sqrt{\rho_0} \mathbf{u}_0\|^p_{L^2(\mathcal{O})} + \frac{p}{2} \int_0^s \|  \sqrt{\rho^{\varepsilon}(r)}\mathbf{u}^{\varepsilon}(r) \|^{p-2}_{L^2(\mathcal{O})} \| g(\mathbf{u}^{\varepsilon}(r))\|^2_{L_2(H;H)}dr \nonumber\\
		&&\quad+ p \int_0^s \| \sqrt{\rho^{\varepsilon}(r)} \mathbf{u}^{\varepsilon}(r) \|^{p-2}_{L^2(\mathcal{O})} \int_\mathcal{O}  \mathbf{u}^{\varepsilon}(r) f\left(\frac{x}{\varepsilon}, r, \mathbf{u}^{\varepsilon}(r)\right) \, dx dr \nonumber\\
&&\quad
+p\int_0^s \|  \sqrt{\rho^{\varepsilon}(r)}\mathbf{u}^{\varepsilon}(r) \|^{p-2}_{L^2(\mathcal{O})}\sum_{k\geq 1}\int_\mathcal{O}  \mathbf{u}^{\varepsilon}(r)g(\mathbf{u}^{\varepsilon}(r))Q^\frac{1}{2}\mathbf{e}_k  \, dx dW_k
\nonumber\\
		&&\quad+ \frac{p(p-2)}{4} \int_0^s \| \sqrt{\rho^{\varepsilon}(r)} \mathbf{u}^{\varepsilon}(r) \|^{p-4}_{L^2(\mathcal{O})} \left(\sum_{k\geq 1}\int_\mathcal{O}  \mathbf{u}^{\varepsilon}(r)g(\mathbf{u}^{\varepsilon}(r))Q^\frac{1}{2}\mathbf{e}_k \, dx\right)^2  \, dr.
	\end{eqnarray}
We shall estimate each term of the equation \eqref{prior3.30} after taking the supremum up to time \( t \) and applying the expectation on both sides.
For the second term on the left-hand side of \eqref{prior3.30}, by the uniform ellipticity condition  \eqref{1.2} of operator \( A^\varepsilon \)
 we have
$$p\int_0^s \| \sqrt{\rho^{\varepsilon}(r)} \mathbf{u}^{\varepsilon}(r) \|^{p-2}_{L^2(\mathcal{O})} (A^\varepsilon \mathbf{u}^\varepsilon(r), \mathbf{u}^\varepsilon(r))_{V'\times V} \, dr
$$
\begin{align}\label{3.17*}
\geq p\kappa\int_0^s \| \sqrt{\rho^{\varepsilon}(r)} \mathbf{u}^{\varepsilon}(r) \|^{p-2}_{L^2(\mathcal{O})}\|\nabla \mathbf{u}^{\varepsilon}(r)\|^2_{L^2(\mathcal{O})} \, dr.
\end{align}
For the second and fifth terms on the right-hand side of \eqref{prior3.30}, using $(\mathbf{A.4})$ we see
$$
\frac{p}{2} \int_0^s \|  \sqrt{\rho^{\varepsilon}(r)}\mathbf{u}^{\varepsilon}(r) \|^{p-2}_{L^2(\mathcal{O})}  \| g(\mathbf{u}^{\varepsilon}(r))\|^2_{L_2(H;H)}dr
$$
$$
\leq \frac{pc_4}{2} \int_0^s \|  \sqrt{\rho^{\varepsilon}(r)}\mathbf{u}^{\varepsilon}(r) \|^{p-2}_{L^2(\mathcal{O})} (1+\|\mathbf{u}^{\varepsilon}(r)\|_{H}^2 )\, dr
$$
$$
\leq \frac{pc_4}{2}\int_0^s \|  \sqrt{\rho^{\varepsilon}(r)}\mathbf{u}^{\varepsilon}(r) \|^{p-2}_{L^2(\mathcal{O})} \left(1+\frac{\|\sqrt{\rho^{\varepsilon}(r)}\mathbf{u}^{\varepsilon}(r)\|_{L^2(\mathcal{O})}^2}{m} \right)\, dr
$$
\begin{align}
\lesssim_{m,p, c_4} C\int_0^s \left(1+\|  \sqrt{\rho^{\varepsilon}(r)}\mathbf{u}^{\varepsilon}(r) \|^{p}_{L^2(\mathcal{O})}\right)\, dr,
\end{align}
and
$$
\frac{p(p-2)}{4} \int_0^s \| \sqrt{\rho^{\varepsilon}(r)} \mathbf{u}^{\varepsilon}(r) \|^{p-4}_{L^2(\mathcal{O})} \left(\sum_{k\geq 1}\int_\mathcal{O}  \mathbf{u}^{\varepsilon}(r)g(\mathbf{u}^{\varepsilon}(r))Q^\frac{1}{2}\mathbf{e}_k \, dx\right)^2  \, dr
$$
$$
\leq \frac{p(p-2)c_4}{4}\int_0^s \| \sqrt{\rho^{\varepsilon}(r)} \mathbf{u}^{\varepsilon}(r) \|^{p-4}_{L^2(\mathcal{O})} \|\mathbf{u}^{\varepsilon}(r)\|_{H}^2(1+
\|\mathbf{u}^{\varepsilon}(r)\|_{H}^2)  \, dr
$$
$$
\lesssim_{m,p, c_4} C \int_0^s \| \sqrt{\rho^{\varepsilon}(r)} \mathbf{u}^{\varepsilon}(r) \|^{p-2}_{L^2(\mathcal{O})}\left(1+\|\sqrt{\rho^{\varepsilon}(r)}\mathbf{u}^{\varepsilon}(r)\|_{L^2(\mathcal{O})}^2\right)  \, dr
$$
\begin{align}\label{3.18}
\lesssim_{m,p,c_4} C\int_0^s \left(1+\|  \sqrt{\rho^{\varepsilon}(r)}\mathbf{u}^{\varepsilon}(r) \|^{p}_{L^2(\mathcal{O})}\right)\, dr.
\end{align}
For the third term on the right-hand side of \eqref{prior3.30}, using $(\mathbf{A.2})$ we see
$$
  p\int_0^s \| \sqrt{\rho^{\varepsilon}(r)} \mathbf{u}^{\varepsilon}(r) \|^{p-2}_{L^2(\mathcal{O})} \int_\mathcal{O}  \mathbf{u}^{\varepsilon}(r) f\left(\frac{x}{\varepsilon}, r, \mathbf{u}^{\varepsilon}(r)\right) \, dx \, dr
$$
$$
\leq  pc_2\int_0^s \| \sqrt{\rho^{\varepsilon}(r)} \mathbf{u}^{\varepsilon}(r) \|^{p-2}_{L^2(\mathcal{O})}\int_{\mathcal{O} } |\mathbf{u}^{\varepsilon}(r)|\left(1+ |\mathbf{u}^{\varepsilon}(r)|\right) dx\, dr
$$
$$
\leq  pc_2\int_0^s \| \sqrt{\rho^{\varepsilon}(r)} \mathbf{u}^{\varepsilon}(r) \|^{p-2}_{L^2(\mathcal{O})}\left(\frac{|\mathcal{O}|}{2}+\frac{3\|\mathbf{u}^{\varepsilon}(r)\|^2_{H}}{2}\right)\, dr
$$
\begin{align}
\lesssim_{m,p,c_2, T} C\int_0^s \| \sqrt{\rho^{\varepsilon}(r)} \mathbf{u}^{\varepsilon}(r) \|^{p}_{L^2(\mathcal{O})} \, dr.
\end{align}
For the fourth term on the right-hand side of \eqref{prior3.30}, using the Burkholder-Davis-Gundy inequality, we have
	$$
		p\mathrm{E}\left( \sup_{0 \leq s \leq t} \left| \int_0^s \| \sqrt{\rho^{\varepsilon}(r)} \mathbf{u}^{\varepsilon}(r) \|^{p-2}_{_{L^2(\mathcal{O})}} \sum_{k\geq 1}\int_\mathcal{O}  \mathbf{u}^{\varepsilon}(r)g(\mathbf{u}^{\varepsilon}(r))Q^\frac{1}{2}\mathbf{e}_k  \, dx dW_k\right|\right)
$$
		$$\leq C(p)\mathrm{E}\left[ \sup_{0 \leq s \leq t} \| \sqrt{\rho^{\varepsilon}(s)} \mathbf{u}^{\varepsilon}(s) \|^{p-2}_{_{L^2(\mathcal{O})}} \left( \int_0^{t}\sum_{k\geq 1}(g( \mathbf{u}^{\varepsilon}(r))Q^\frac{1}{2}\mathbf{e}_k, \mathbf{u}^{\varepsilon}(r) )^2  \, dr \right)^{1/2} \right]
	$$
		$$\leq C(p,c_4) \mathrm{E} \left[ \sup_{0 \leq s \leq t} \| \sqrt{\rho^{\varepsilon}(s)} \mathbf{u}^{\varepsilon}(s) \|^{p-2}_{_{L^2(\mathcal{O})}} \left( \int_0^{t} \|\mathbf{u}^{\varepsilon}(r) \|^2_{H} (1+\|\mathbf{u}^{\varepsilon}(r) \|^2_{H})\, dr \right)^{1/2} \right]
	$$
$$
	\leq \frac{1}{2}\mathrm{E}\left( \sup_{0 \leq s \leq t} \| \sqrt{\rho^{\varepsilon}(s)} \mathbf{u}^{\varepsilon}(s) \|^{p}_{_{L^2(\mathcal{O})}}\right)  + C(p,c_4)\mathrm{E} \left( \int_0^{t} \|\mathbf{u}^{\varepsilon}(r) \|^2_{H} (1+\|\mathbf{u}^{\varepsilon}(r) \|^2_{H}) \, dr \right)^\frac{p}{4}$$
\begin{align}\label{prior3.34}
\quad\leq \frac{1}{2}\mathrm{E}\left( \sup_{0 \leq s \leq t} \| \sqrt{\rho^{\varepsilon}(s)} \mathbf{u}^{\varepsilon}(s) \|^{p}_{_{L^2(\mathcal{O})}}\right)  + C(p, c_4, m)\mathrm{E} \left(\int_{0}^{t} \left(1+\| \sqrt{\rho^{\varepsilon}(r)}\mathbf{u}^{\varepsilon}(r) \|^p_{L^2(\mathcal{O})}\right) \, dr \right).
	\end{align}
Using \eqref{prior3.30}-\eqref{prior3.34}, we arrive at
$$
\mathrm{E} \left(\sup_{0 \leq s \leq t} \| \sqrt{\rho^{\varepsilon}(s)} \mathbf{u}^{\varepsilon}(s) \|^p_{L^2(\mathcal{O})}\right) + 2p\kappa \mathrm{E} \left(\int_0^{t} \| \sqrt{\rho^{\varepsilon}(s)} \mathbf{u}^{\varepsilon}(s) \|^{p-2}_{L^2(\mathcal{O})} \| \nabla \mathbf{u}^{\varepsilon}(s) \|^2_{L^2(\mathcal{O})} \, ds\right)$$
\begin{eqnarray*}
	\leq\| \sqrt{\rho_{0}} \mathbf{u}_0 \|^p_{L^2(\mathcal{O})} + C(p, c_2, c_4, m,\kappa, T)\mathrm{E} \left(\int_{0}^{t} \left(1+\| \sqrt{\rho^{\varepsilon}(s)}\mathbf{u}^{\varepsilon}(s) \|^p_{L^2(\mathcal{O})}\right) \, ds \right).
	\end{eqnarray*}
Using Gronwall's lemma, we have for any $t\in [0,T]$	
\begin{align}\label{prior3.41}
&\mathrm{E} \left(\sup_{0 \leq s \leq t} \| \sqrt{\rho^{\varepsilon}(s)} \mathbf{u}^{\varepsilon}(s) \|^p_{L^2(\mathcal{O})}\right) + \mathrm{E} \left(\int_0^{t} \| \sqrt{\rho^{\varepsilon}(s)} \mathbf{u}^{\varepsilon}(s) \|^{p-2}_{L^2(\mathcal{O})} \| \nabla \mathbf{u}^{\varepsilon}(s) \|^2_{L^2(\mathcal{O})} \, ds\right)\nonumber\\
&\lesssim_{m,c_2, c_4,\kappa,p, T, \rho_{0}, \mathbf{u}_0} C.
\end{align}

 If  we take the power \( p \geq 1 \) in \eqref{prior3.15}, by a same way we could have
	\begin{equation*}
	\mathrm{E} \left( \int_0^{t} \| \nabla \mathbf{u}^{\varepsilon}(s) \|^2_{L^2(\mathcal{O})} \, ds \right)^p \lesssim_{m, c_2, c_4, \kappa,p, T, \rho_{0}, \mathbf{u}_0 }  C.
	\end{equation*}
It follows from \eqref{prior3.41} and Lemma \ref{lem3.1*} that for any $p\geq 2$

\begin{align}\label{3.22}
\rho^{\varepsilon} \mathbf{u}^{\varepsilon} \in L^p(\Omega; L^\infty(0, T; L^2(\mathcal{O}))),
\end{align}
and
$$ \mathbf{u}^{\varepsilon} \in L^p(\Omega; L^\infty(0, T; H)),$$
uniformly in $\varepsilon$ as desired.
\end{proof}

	Next we focus on the temporal regularity of $\rho^\varepsilon, \mathbf{u}^\varepsilon$. 

	

\begin{lemma}\label{lem3.2}If $\mathbf{(A.2)}$, $\mathbf{(A.4)}$ hold,  then there exists a constant $C>0$ which is independent of $\varepsilon, \theta$ such that the time increment of $\mathbf{u}^\varepsilon$ satisfies
	\[
	\mathrm{E} \left(\sup_{\theta\in (0,T)}\left(\theta^{-\frac{1}{4}}\int^{T - \theta}_{0} \| \mathbf{u}^{\varepsilon}(t + \theta) - \mathbf{u}^{\varepsilon}(t) \|^2_{H} \, dt\right)\right) \lesssim_{m, M, c_2, c_4, \kappa, T}  C.
	\]
Moreover, we have for  any $\beta\in  [0,\frac{1}{2})$ and $p>2$, $$\rho^{\varepsilon} \mathbf{u}^{\varepsilon}\in L^2(\Omega; W^{\beta,p}(0,T; W^{-3,2}(\mathcal{O})))$$
and
$$ \rho^{\varepsilon} \in L^p(\Omega; W^{1,\infty}(0, T; W^{-1,2}(\mathcal{O}))),$$
uniformly in $\varepsilon$.
	\end{lemma}
	\begin{proof} According to \eqref{3.22}, we have
	\[
	\nabla(\rho^{\varepsilon} \mathbf{u}^{\varepsilon}) \in L^p(\Omega; L^\infty(0, T; W^{-1,2}(\mathcal{O}))),
	\]
uniformly in $\varepsilon$. Then, from the continuity equation
$
\partial_t\rho^\varepsilon(t)+{\rm div }(\rho^\varepsilon(t) \mathbf{u}^\varepsilon(t))=0,
$
 we see
\begin{align*}\partial_t \rho^{\varepsilon} \in L^p(\Omega; L^\infty(0, T; W^{-1,2}(\mathcal{O}))),\end{align*}
which together with Lemma \ref{lem3.1*} leads to 
$$ \rho^{\varepsilon} \in L^p(\Omega; W^{1,\infty}(0, T; W^{-1,2}(\mathcal{O}))),$$
uniformly in $\varepsilon$.

We next establish the temporal regularity of $\mathbf{u}^\varepsilon$. Note that from the momentum equation, we get
	\begin{eqnarray}
		&&\mathrm{E} \left(\sup_{\theta\in (0,T)}\left(\theta^{-\frac{1}{2}}\int_{0}^{T-\theta} \|\rho^{\varepsilon}(t+\theta) \mathbf{u}^{\varepsilon}(t+\theta) - \rho^{\varepsilon}(t) \mathbf{u}^{\varepsilon}(t)\|^2_{V'} \, dt\right)\right) \nonumber\\
&&= \mathrm{E} \left(\sup_{\theta\in (0,T)}\left(\theta^{-\frac{1}{2}}\int^{T - \theta}_{0} \left\|\int^{t + \theta}_{t} \frac{d(\rho^{\varepsilon}(s) \mathbf{u}^{\varepsilon}(s))}{ds}\, ds\right\|^2_{V'} \, dt\right)\right) \nonumber\\
		&&\leq  c \mathrm{E}  \left(\sup_{\theta\in (0,T)}\theta^{-\frac{1}{2}}\int_{ 0}^{T-\theta}
		\Bigg( \Bigg\|-\int^{t + \theta}_{t} \text{div}(\rho^{\varepsilon}(s) \mathbf{u}^{\varepsilon}(s) \otimes \mathbf{u}^{\varepsilon}(s)) \, ds\right\|^2_{V'}+ \left\|-\int^{t + \theta}_{t} A^\varepsilon\mathbf{u}^{\varepsilon}(s) \, ds\right\|^2_{V'}
		\nonumber\\
		&&\quad+  \left\|\int^{t + \theta}_{t} f\left(\frac{x}{\varepsilon}, s, \mathbf{u}^{\varepsilon}(s)\right) \, ds\right\|^2_{V'}  +\left\|\int^{t + \theta}_{t} g( \mathbf{u}^{\varepsilon}(s)) \, dW\right\|^2_{V'} \Bigg)dt\Bigg).
	\end{eqnarray}	
For the advection term, using the Gagliardo-Nirenberg inequality
$$\|\mathbf{u}^{\varepsilon}\|_{L^4(\mathcal{O})}^2 \leq C\|\mathbf{u}^{\varepsilon}\|^{\frac{4-d}{2}}_{H}\|\nabla\mathbf{u}^{\varepsilon}\|_{L^2(\mathcal{O})}^\frac{d}{2},
$$
we see
	$$
\left\|-\int^{t + \theta}_{t} \text{div}(\rho^{\varepsilon}(s) \mathbf{u}^{\varepsilon}(s) \otimes \mathbf{u}^{\varepsilon}(s)) \, ds\right\|^2_{V'}
$$
	\[
	 = \left(\sup_{\phi \in V; \|\phi\|_V = 1}  \left( \int^{t + \theta}_{t}- (\text{div}(\rho^{\varepsilon}(s) \mathbf{u}^{\varepsilon}(s) \otimes \mathbf{u}^{\varepsilon}(s)) ,  \phi)\, ds \right)\right)^2
	\]
$$
\leq \left(\sup_{\phi \in V; \|\phi\|_V = 1}  \left( \int^{t + \theta}_{t} \|\rho^{\varepsilon}(s) \mathbf{u}^{\varepsilon}(s) \otimes \mathbf{u}^{\varepsilon}(s)\|_{L^2(\mathcal{O})} \|\phi\|_{V} ds \right)\right)^2
$$

$$\leq  \left(\int^{t + \theta}_{t} \|\rho^{\varepsilon}(s) \mathbf{u}^{\varepsilon}(s) \otimes \mathbf{u}^{\varepsilon}(s)\|_{L^2(\mathcal{O})}  \, ds\right)^2
$$
$$
\leq \|\rho^\varepsilon\|^2_{L^\infty(\mathcal{O}_t)}\left(\int^{t + \theta}_{t} \|\mathbf{u}^{\varepsilon}(s)\|_{L^4(\mathcal{O})}^2  \, ds\right)^2
$$
$$
\leq C\|\rho^\varepsilon\|^2_{L^\infty(\mathcal{O}_t)}\left(\int^{t + \theta}_{t} \|\mathbf{u}^{\varepsilon}(s)\|^{\frac{4-d}{2}}_{H}\|\nabla\mathbf{u}^{\varepsilon}(s)\|_{L^2(\mathcal{O})}^\frac{d}{2}  \, ds\right)^2
$$
$$
\leq C\|\rho^\varepsilon\|^2_{L^\infty(\mathcal{O}_t)}\|\mathbf{u}^{\varepsilon}\|^{4-d}_{L^\infty(0,T; H)}\left(\int^{t + \theta}_{t} \|\nabla\mathbf{u}^{\varepsilon}(s)\|_{L^2(\mathcal{O})}^\frac{d}{2}  \, ds\right)^2
$$
$$
\leq C\theta^{\frac{4-d}{2}}\|\rho^\varepsilon\|^2_{L^\infty(\mathcal{O}_t)}\|\mathbf{u}^{\varepsilon}\|^{4-d}_{L^\infty(0,T; H)}\left(\int^{t + \theta}_{t} \|\nabla\mathbf{u}^{\varepsilon}(s)\|_{L^2(\mathcal{O})}^2  \, ds\right)^2,
$$
$$
\leq CM^2\theta^{\frac{4-d}{2}}\|\mathbf{u}^{\varepsilon}\|^{4-d}_{L^\infty(0,T; H)}\left(\int^{t + \theta}_{t} \|\nabla\mathbf{u}^{\varepsilon}(s)\|_{L^2(\mathcal{O})}^2  \, ds\right)^2,
$$
which together with \eqref{prior3.45} leads to
$$
\mathrm{E} \left(\sup_{\theta\in (0,T)}\left(\theta^{-\frac{1}{2}}\int_{0}^{T-\theta}\left\|-\int^{t + \theta}_{t} \text{div}(\rho^{\varepsilon}(s) \mathbf{u}^{\varepsilon}(s) \otimes \mathbf{u}^{\varepsilon}(s)) \, ds\right\|^2_{V'}dt\right)\right)
$$
$$
\leq CM^2\mathrm{E} \left(\sup_{\theta\in (0,T)}\left(\theta^{\frac{3-d}{2}}\int_{0}^{T-\theta}\|\mathbf{u}^{\varepsilon}\|^{4-d}_{L^\infty(0,T; H)}\left(\int^{t + \theta}_{t} \|\nabla\mathbf{u}^{\varepsilon}(s)\|_{L^2(\mathcal{O})}^2  \, ds\right)^2dt\right)\right)
$$
\begin{align}\label{3.25}
\qquad\leq CM^2\left(\mathrm{E}\left(\|\mathbf{u}^{\varepsilon}\|^{2(4-d)}_{L^\infty(0,T; H)}\right)\right)^\frac{1}{2}\left(\mathrm{E}\left(\int^{T}_{0} \|\nabla\mathbf{u}^{\varepsilon}(s)\|_{L^2(\mathcal{O})}^2  \, ds\right)^4\right)^\frac{1}{2} \lesssim_{m,M,c_2,c_4,\kappa,T} C.
\end{align}
	For the diffusion  term, we get
	$$
	\mathrm{E}  \left(\sup_{\theta\in (0,T)}\left(\theta^{-\frac{1}{2}}\int^{T - \theta}_{0} \left\|\int^{t + \theta}_{t} A^\varepsilon \mathbf{u}^{\varepsilon}(s) \, ds\right\|^2_{V'}\, dt\right)\right)$$
$$=\mathrm{E} \left(\sup_{\theta\in (0,T)}\left(\theta^{-\frac{1}{2}}\int^{T - \theta}_{0} \sup_{\phi \in V; \|\phi\|_V = 1} \left(\int^{t + \theta}_{t}( A^\varepsilon \mathbf{u}^{\varepsilon}(s), \varphi)_{V'\times V} \, ds\right)^2\, dt\right)\right)
$$
$$
\leq  \mathrm{E} \left(\sup_{\theta\in (0,T)}\left(\theta^{-\frac{1}{2}}\int^{T - \theta}_{0} \left(\int^{t + \theta}_{t}\sum_{i,j=1}^d\|a_{i,j}^\varepsilon \nabla\mathbf{u}^{\varepsilon}(s)\|_{L^2(\mathcal{O})}\, ds\right)^2\, dt\right)\right)
$$
$$
\leq \sum_{i,j=1}^d\|a_{i,j}^\varepsilon \|^2_{L^\infty(\mathcal{O}_t)} \mathrm{E} \left(\sup_{\theta\in (0,T)}\left(\theta^{-\frac{1}{2}}\theta\int^{T - \theta}_{0} \left(\int^{t + \theta}_{t}\| \nabla\mathbf{u}^{\varepsilon}(s)\|^2_{L^2(\mathcal{O})}\, ds\right)\, dt\right)\right)
$$
\begin{align}
	\leq CT^\frac{3}{2}\mathrm{E} \left(\int^{T}_{0} \|\nabla \mathbf{u}^{\varepsilon}(s)\|^2_{L^2(\mathcal{O})} \, ds \right) \lesssim_{m,M,c_2,c_4,\kappa,T} C.
	\end{align}
	For the external force term, using $(\mathbf{A.2})$ we see
	$$
		\mathrm{E} \left(\sup_{\theta\in (0,T)}\left(\theta^{-\frac{1}{2}}\int^{T - \theta}_{0}\left\|\int^{t + \theta}_{t} f\left(\frac{x}{\varepsilon}, s, \mathbf{u}^{\varepsilon}(s)\right) \, ds\right\|^2_{V'}  \, dt\right)\right) $$
$$ \leq  c^2_2 \mathrm{E} \left(\sup_{\theta\in (0,T)}\left(\theta^{-\frac{1}{2}}\int^{T - \theta}_{0} \left( \int^{t + \theta}_{t}  \left( 1 + \|\mathbf{u}^{\varepsilon}(s)\|_{H} \right) \, ds \right)^{2} \, dt\right)\right) $$
\begin{align}\label{3.27}
	\leq  c^2_2T^\frac{3}{2} \mathrm{E} \left(\int_{0}^{T } \left( 1 + \|\mathbf{u}^{\varepsilon}(s)\|^2_{H} \right) \, ds \right)\lesssim_{m, M, c_2, c_4, \kappa,T} C.
	\end{align}
We next deal with the stochastic integral part. Let $G(t)=\int_{0}^{t}g( \mathbf{u}^{\varepsilon}(s)) \, dW$, then $G(t)-G(s)=\int^{t }_{s} g( \mathbf{u}^{\varepsilon}(r)) \, dW$. By the Burkholder-Davis-Gundy inequality and  $(\mathbf{A.4})$ we have for $p> 2$ and $\alpha\in [0, \frac{1}{2})$
\begin{align}\label{sto}
&\mathrm{E}\left(\int_{0}^{T}\int_{0}^{T}\frac{\|G(t)-G(s)\|_{V'}^p}{|t-s|^{\alpha p+1}}dsdt\right)\nonumber\\
&=\mathrm{E}\left(\int_{0}^{T}\int_{0}^{T}\frac{\left(\sup_{\varphi\in V, \|\varphi\|_{V}=1}\left|\left\langle \int_{s}^{t}g(\mathbf{u}^{\varepsilon}(r)) dW, \varphi \right\rangle\right|\right)^p}{|t-s|^{\alpha p+1}}dsdt\right)\nonumber\\
&\leq \mathrm{E}\left(\int_{0}^{T}\int_{0}^{T}\frac{\left(\sup_{\varphi\in V, \|\varphi\|_{V}=1}\left(\left\| \int_{s}^{t}g(\mathbf{u}^{\varepsilon}(r)) dW\right\|_{H} \|\varphi\|_{V} \right)\right)^p}{|t-s|^{\alpha p+1}}dsdt\right)\nonumber\\
&\leq C\mathrm{E}\left(\int_{0}^{T}\int_{0}^{T}\frac{\left(\int_{s}^{t}\|g(\mathbf{u}^\varepsilon(r))\|^2_{L_2(H; H)}dr\right)^\frac{p}{2}}{|t-s|^{\alpha p+1}}dsdt\right)\nonumber\\
&\leq C\mathrm{E}\left(\int_{0}^{T}\int_{0}^{T}\frac{\left(\int_{s}^{t}c_4(1+\|\mathbf{u}^\varepsilon(r)\|^2_{H})dr\right)^\frac{p}{2}}{|t-s|^{\alpha p+1}}dsdt\right)\nonumber\\
&\leq C\mathrm{E}\left(\sup_{r\in [0,T]}(1+\|\mathbf{u}^\varepsilon(r)\|^p_{H})\right)\int_{0}^{T}\int_{0}^{T}\frac{1}{|t-s|^{(\alpha-\frac{1}{2}) p+1}}dsdt\nonumber\\
&\lesssim_{m,M, c_2, c_4, \kappa,T,p} C.
\end{align}
Then we have for any $\gamma<\alpha-\frac{1}{p}$
\begin{align}\label{K1}
\mathrm{E}\left(\sup_{0\leq s<t\leq T}\frac{\|G(t)-G(s)\|_{V'}^p}{|t-s|^{\gamma p}}\right)\leq C\mathrm{E}\left(\int_{0}^{T}\int_{0}^{T}\frac{\|G(t)-G(s)\|_{V'}^p}{|t-s|^{\alpha p+1}}dsdt\right) \lesssim_{m,M,c_2, c_4, \kappa,T,p} C.
\end{align}
Note that
\begin{align}\label{n1}\|G(t)-G(s)\|_{V'}\leq K(\omega)|t-s|^\gamma,\end{align}
where $K(\omega)=\sup_{0\leq s<t\leq T}\frac{\|G(t)-G(s)\|_{V'}}{|t-s|^\gamma}$ is a random variable. From \eqref{K1} we have for any $p>2$, $\mathrm{E}(K^p(\omega)) \lesssim_{m,M, c_2, c_4,\kappa,T,p} C$, then  $K(\omega)<\infty$, $\mathrm{P}$ a.s. 
Consequently, choose $\alpha, p$ such that $\alpha>\frac{1}{4}+\frac{1}{p}$, we  have for $\gamma\in [\frac{1}{4}, \alpha-\frac{1}{p})$
\begin{align}\label{3.28}	&\mathrm{E} \left(\sup_{\theta\in (0,T)}\left(\theta^{-\frac{1}{2}}\int_{0}^{T-\theta}\left\|\int^{t + \theta}_{t} g( \mathbf{u}^{\varepsilon}(s)) \, dW\right\|^2_{V'}dt\right)\right)\nonumber\\
&\leq \mathrm{E} \left(\sup_{\theta\in (0,T)}\left(\theta^{-\frac{1}{2}}\int_{0}^{T-\theta}K^2(\omega)\theta^{2\gamma} dt\right)\right)\nonumber\\
&\leq \mathrm{E} \left(\sup_{\theta\in (0,T)}\theta^{2\gamma-\frac{1}{2}}\int_{0}^{T}K^2(\omega) dt\right)\nonumber\\
&\leq T^{2\gamma-\frac{1}{2}} \left(\int_{0}^{T}\mathrm{E}(K^2(\omega)) dt\right)
\lesssim_{m,M,c_2, c_4,\kappa,T} C.
\end{align}

Combined the above estimates \eqref{3.25}-\eqref{3.27} and \eqref{3.28}, we obtain
	\begin{align}\label{3.28*}
	\mathrm{E} \left(\sup_{\theta\in (0,T)}\left(\theta^{-\frac{1}{2}}\int^{T - \theta}_{0} \| \rho^{\varepsilon}(t + \theta) \mathbf{u}^{\varepsilon}(t + \theta) -\rho^{\varepsilon}(t) \mathbf{u}^{\varepsilon}(t) \|^2_{V^{\prime}} \, dt\right)\right)
	\lesssim_{m,M,c_2, c_4,\kappa,T} C.
	\end{align}
By \eqref{3.28*} we obtain
\begin{align}\label{3.31}
&	\mathrm{E} \left(\sup_{\theta\in (0,T)}\left(\theta^{-\frac{1}{4}}\int^{T - \theta}_{0} \int_{\mathcal{O}} (\rho^{\varepsilon}(t + \theta) \mathbf{u}^{\varepsilon}(t + \theta) -\rho^{\varepsilon}(t) \mathbf{u}^{\varepsilon}(t))(\mathbf{u}^{\varepsilon}(t + \theta)-\mathbf{u}^{\varepsilon}(t))dx \, dt\right)\right)\nonumber\\
&\leq \mathrm{E} \left(\sup_{\theta\in (0,T)}\left(\theta^{-\frac{1}{4}}\int^{T - \theta}_{0} \|\rho^{\varepsilon}(t + \theta) \mathbf{u}^{\varepsilon}(t + \theta) -\rho^{\varepsilon}(t) \mathbf{u}^{\varepsilon}(t)\|_{V'}\|\mathbf{u}^{\varepsilon}(t + \theta)-\mathbf{u}^{\varepsilon}(t)\|_{V}\, dt\right)\right)\nonumber\\
&\leq  C\mathrm{E} \left(\left(\sup_{\theta\in (0,T)}\left(\theta^{-\frac{1}{2}}\int^{T - \theta}_{0} \|\rho^{\varepsilon}(t + \theta) \mathbf{u}^{\varepsilon}(t + \theta) -\rho^{\varepsilon}(t) \mathbf{u}^{\varepsilon}(t)\|^2_{V'}dt\right)\right)^\frac{1}{2}\left(\int_{0}^{T}\|\mathbf{u}^{\varepsilon}(t)\|^2_{V}\, dt\right)^\frac{1}{2}\right)\nonumber\\
&\leq C\left(\mathrm{E} \left(\sup_{\theta\in (0,T)}\left(\theta^{-\frac{1}{2}}\int^{T - \theta}_{0} \| \rho^{\varepsilon}(t + \theta) \mathbf{u}^{\varepsilon}(t + \theta) -\rho^{\varepsilon}(t) \mathbf{u}^{\varepsilon}(t) \|^2_{V^{\prime}} \, dt\right)\right)\right)^\frac{1}{2}
\left(\mathrm{E}\int_{0}^{T}\|\mathbf{u}^{\varepsilon}(t)\|^2_{V}\, dt\right)^\frac{1}{2}\nonumber\\
& \lesssim_{m,M,c_2, c_4,\kappa,T} C.
	\end{align}
By H\"{o}lder's inequality, the continuity equation and embedding $V\rightarrow L^p(\mathcal{O})$ for $p\in [1,\frac{2d}{d-2}]$  we further have
\begin{align}\label{3.32}
&	\mathrm{E} \left(\sup_{\theta\in (0,T)}\left(\theta^{-\frac{1}{4}}\int^{T - \theta}_{0} \int_{\mathcal{O}} -(\rho^{\varepsilon}(t + \theta) \mathbf{u}^{\varepsilon}(t) -\rho^{\varepsilon}(t) \mathbf{u}^{\varepsilon}(t))(\mathbf{u}^{\varepsilon}(t + \theta)-\mathbf{u}^{\varepsilon}(t))dx \, dt\right)\right)\nonumber\\
&=\mathrm{E} \left(\sup_{\theta\in (0,T)}\left(\theta^{-\frac{1}{4}}\int^{T - \theta}_{0} \int_{\mathcal{O}}\left(\int_{t}^{t+\theta} -\partial_s\rho^{\varepsilon}ds \right) \mathbf{u}^{\varepsilon}(t) (\mathbf{u}^{\varepsilon}(t + \theta)-\mathbf{u}^{\varepsilon}(t))dx \, dt\right)\right)\nonumber\\
&=\mathrm{E} \left(\sup_{\theta\in (0,T)}\left(\theta^{-\frac{1}{4}}\int^{T - \theta}_{0} \int_{t}^{t+\theta}-\int_{\mathcal{O}} \rho^{\varepsilon} (s)\mathbf{u}^\varepsilon(s) \nabla(\mathbf{u}^{\varepsilon}(t) (\mathbf{u}^{\varepsilon}(t + \theta)-\mathbf{u}^{\varepsilon}(t))) dx\, ds \, dt\right)\right)\nonumber\\
&\leq C\mathrm{E} \left(\sup_{\theta\in (0,T)}\left(\theta^{-\frac{1}{4}}\int^{T - \theta}_{0} \int_{t}^{t+\theta}\|\rho^{\varepsilon} (s)\|_{L^\infty(\mathcal{O})}\| \mathbf{u}^\varepsilon(s) \|_{L^6(\mathcal{O})} \|\nabla(\mathbf{u}^{\varepsilon}(t) (\mathbf{u}^{\varepsilon}(t + \theta)-\mathbf{u}^{\varepsilon}(t))) \|_{L^\frac{3}{2}(\mathcal{O})} ds \, dt\right)\right)\nonumber\\
&\leq CM\mathrm{E} \left(\sup_{\theta\in (0,T)}\left(\theta^{-\frac{1}{4}}\int^{T - \theta}_{0} \int_{t}^{t+\theta}\| \mathbf{u}^\varepsilon(s) \|_{V}ds \|\mathbf{u}^{\varepsilon}(t)\|_{V} \|\mathbf{u}^{\varepsilon}(t + \theta)-\mathbf{u}^{\varepsilon}(t) \|_{V} \, dt\right)\right)\nonumber\\
&\leq CM\mathrm{E} \left(\sup_{\theta\in (0,T)}\int^{T - \theta}_{0} \left(\int_{t}^{t+\theta}\| \mathbf{u}^\varepsilon(s) \|^2_{V}ds \right)^\frac{1}{2} \|\mathbf{u}^{\varepsilon}(t)\|_{V} \|\mathbf{u}^{\varepsilon}(t + \theta)-\mathbf{u}^{\varepsilon}(t) \|_{V} \, dt\right)\nonumber\\
&\leq CM\mathrm{E} \left(\int_{0}^{T}\| \mathbf{u}^\varepsilon(s) \|^2_{V}ds \right)^\frac{3}{2} \lesssim_{m,M,c_2, c_4,\kappa,T} C.
\end{align}
By \eqref{3.31} and \eqref{3.32} we see
\begin{align}\label{3.31*}
&	\mathrm{E} \left(\sup_{\theta\in (0,T)}\left(\theta^{-\frac{1}{4}}\int^{T - \theta}_{0} \int_{\mathcal{O}} \rho^{\varepsilon}(t + \theta) (\mathbf{u}^{\varepsilon}(t + \theta) - \mathbf{u}^{\varepsilon}(t))^2dx \, dt\right)\right)\nonumber\\
&\leq \mathrm{E} \left(\sup_{\theta\in (0,T)}\left(\theta^{-\frac{1}{4}}\int^{T - \theta}_{0} \int_{\mathcal{O}} (\rho^{\varepsilon}(t + \theta) \mathbf{u}^{\varepsilon}(t + \theta) -\rho^{\varepsilon}(t) \mathbf{u}^{\varepsilon}(t))(\mathbf{u}^{\varepsilon}(t + \theta)-\mathbf{u}^{\varepsilon}(t))dx \, dt\right)\right)\nonumber\\
&\quad+	\mathrm{E} \left(\sup_{\theta\in (0,T)}\left(\theta^{-\frac{1}{4}}\int^{T - \theta}_{0} \int_{\mathcal{O}} -(\rho^{\varepsilon}(t + \theta) \mathbf{u}^{\varepsilon}(t) -\rho^{\varepsilon}(t) \mathbf{u}^{\varepsilon}(t))(\mathbf{u}^{\varepsilon}(t + \theta)-\mathbf{u}^{\varepsilon}(t))dx \, dt\right)\right)\nonumber\\
& \lesssim_{m,M,c_2,c_4,\kappa,T} C.
\end{align}
Then, by \eqref{3.31*} and the fact that $0<m\leq \rho^\varepsilon(x,t)\leq M<\infty$ we have
\begin{align*}
	\mathrm{E} \left(\sup_{\theta\in (0,T)}\left(\theta^{-\frac{1}{4}}\int^{T - \theta}_{0} \|\mathbf{u}^{\varepsilon}(t + \theta) - \mathbf{u}^{\varepsilon}(t)\|^2_{H} \, dt\right)\right) \lesssim_{m,M,c_2, c_4,\kappa,T} C.
\end{align*}

We finally show $\rho^{\varepsilon} \mathbf{u}^{\varepsilon}\in L^2(\Omega; W^{\beta,p}(0,T; W^{-3,2}(\mathcal{O})))$ with $\beta\in [0,\frac{1}{2}), p>2$. Using the momentum equation we see
\begin{align}\label{2.34}
&\mathrm{E}\left(\|\rho^{\varepsilon} \mathbf{u}^{\varepsilon}\|^p_{ W^{\beta,p}(0,T; W^{-3,2}(\mathcal{O}))}\right)\nonumber\\
&\leq c_p\mathrm{E}\left(\|\rho_0\mathbf{u}_0\|^p_{ W^{\beta,p}(0,T; W^{-3,2}(\mathcal{O}))}+\left\|\int_{0}^{t}A^\varepsilon\mathbf{u}^{\varepsilon}(s)ds\right\|^p_{ W^{\beta,p}(0,T; W^{-3,2}(\mathcal{O}))}\right)\nonumber\\
&\quad+ c_p\mathrm{E}\left(\left\|\int_{0}^{t}\text{div}(\rho^{\varepsilon}(s) \mathbf{u}^{\varepsilon}(s) \otimes \mathbf{u}^{\varepsilon}(s)) ds\right\|^p_{ W^{\beta,p}(0,T; W^{-3,2}(\mathcal{O}))}\right)\nonumber\\
&\quad+c_p\mathrm{E}\left(\left\|\int_{0}^{t}f^\varepsilon(\mathbf{u}^{\varepsilon}(s))ds\right\|^p_{ W^{\beta,p}(0,T; W^{-3,2}(\mathcal{O}))}
+\left\|\int_{0}^{t}g(\mathbf{u}^{\varepsilon}(s))dW\right\|^p_{ W^{\beta,p}(0,T; W^{-3,2}(\mathcal{O}))}\right).
\end{align}
By Lemma \ref{lem3.1} and  $(\mathbf{A.2})$ we have
\begin{align}
&c_p\mathrm{E}\left(\left\|\int_{0}^{t}A^\varepsilon\mathbf{u}^{\varepsilon}(s)ds\right\|^p_{W^{\beta,p}(0,T; W^{-3,2}(\mathcal{O}))}+\left\|\int_{0}^{t}f^\varepsilon(\mathbf{u}^{\varepsilon}(s))ds\right\|^p_{W^{\beta,p}(0,T; W^{-3,2}(\mathcal{O}))}\right)\nonumber\\
&\leq c_p\mathrm{E}\left(\left\|\int_{0}^{t}A^\varepsilon\mathbf{u}^{\varepsilon}(s)ds\right\|^p_{W^{1,p}(0,T; W^{-3,2}(\mathcal{O}))}+\left\|\int_{0}^{t}f^\varepsilon(\mathbf{u}^{\varepsilon}(s))ds\right\|^p_{W^{1,p}(0,T; W^{-3,2}(\mathcal{O}))}\right)\nonumber\\
&\leq C(T, p)\mathrm{E}\left(\int_{0}^{T}\|A^\varepsilon\mathbf{u}^{\varepsilon}(s)\|^p_{W^{-3,2}(\mathcal{O})}ds
+\int_{0}^{T}\|f^\varepsilon(\mathbf{u}^{\varepsilon}(s))\|^p_{W^{-3,2}(\mathcal{O})}ds\right)\nonumber\\
&\leq C(c_2, T,p)\mathrm{E}\left(\int_{0}^{T}(1+\|\mathbf{u}^{\varepsilon}(s)\|^p_{H})ds\right)
\lesssim_{m,M,c_2, c_4,\kappa,T, p} C.
\end{align}
For the third term on the right-hand side of \eqref{2.34}, by Lemma \ref{lem3.1} we have
\begin{align}
&c_p\mathrm{E}\left(\left\|\int_{0}^{t}\text{div}(\rho^{\varepsilon}(s) \mathbf{u}^{\varepsilon}(s) \otimes \mathbf{u}^{\varepsilon}(s)) ds\right\|^p_{ W^{\beta,p}(0,T; W^{-3,2}(\mathcal{O}))}\right)\nonumber\\
&\leq C(p, T)\mathrm{E}\left(\int_{0}^{T}\|\text{div}(\rho^{\varepsilon}(s) \mathbf{u}^{\varepsilon}(s) \otimes \mathbf{u}^{\varepsilon}(s))\|^p_{W^{-3,2}(\mathcal{O})} ds\right)\nonumber\\
&\leq C(p, T)\mathrm{E}\left(\int_{0}^{T}\|\rho^{\varepsilon}(s) \mathbf{u}^{\varepsilon}(s) \otimes \mathbf{u}^{\varepsilon}(s)\|^p_{L^1(\mathcal{O})} ds\right)\nonumber\\
&\leq C(p, T)\mathrm{E}\left(\int_{0}^{T}\|\rho^{\varepsilon}(s)\|^p_{L^\infty(\mathcal{O})} \|\mathbf{u}^{\varepsilon}(s) \|^{2p}_{H} ds\right)\nonumber\\
&\leq C(p, M, T)\mathrm{E}\left(\int_{0}^{T}\|\mathbf{u}^{\varepsilon}(s) \|^{2p}_{H} ds\right)
\lesssim_{m,M,c_2, c_4,\kappa,T, p} C.
\end{align}
For the stochastic integral, similar to \eqref{sto} we see
\begin{align}\label{sto1}
\mathrm{E}\left(\left\|\int_{0}^{t}g(\mathbf{u}^{\varepsilon}(s))dW\right\|^p_{ W^{\beta,p}(0,T; W^{-3,2}(\mathcal{O}))}\right)\lesssim_{m,M,c_2, c_4,\kappa,T,p} C.
\end{align}
By \eqref{2.34}-\eqref{sto1} we obtain the desired estimate $\rho^{\varepsilon} \mathbf{u}^{\varepsilon}\in L^2(\Omega; W^{\beta,p}(0,T; W^{-3,2}(\mathcal{O})))$.
This completes the proof.
\end{proof}

\subsection{Stochastic compactness} With the necessary estimates in hands, we are in a position to show the tightness. 
We first recall the following compactness criterion. 
\begin{lemma}\cite[Theorem 3]{Simon}\label{Simon1} Suppose that $X_{1}\subset X_{0}\subset X_{2}$ are Banach spaces, where $X_{1}$ and $X_{2}$ are reflexive and the embedding of $X_{1}$ into $X_{0}$ is compact. Let $\mathcal{E}$ be a bounded set in $L^{p}(0,T; X_{1})$ for any $1\leq p\leq\infty$, and
$$\|h(t+\theta)-h(t)\|_{L^p(0,T-\theta; X_2)}\rightarrow 0, ~{\rm as}~\theta\rightarrow 0,$$
uniformly in $h\in \mathcal{E}$. Then, $\mathcal{E}$ is relatively compact in $L^p(0,T; X_0)$. Similarly, we have that the embedding of space
$$
L^{p}(0,T; X_{1})\cap W^{\alpha, q}(0,T; X_{2})\rightarrow L^{p}(0,T; X_{0})
$$
is compact where $\alpha > 0$ if $q>p$ and $\alpha > \frac{1}{q}-\frac{1}{p}$ if $q<p$, (and in $C([0,T]; X_{0})$ if $p=\infty$). 
\end{lemma}

Consider the space
$$X=C([0,T]; W^{-\alpha,p}(\mathcal{O}))\times L^2(0,T;H)\times C([0,T]; W^{-\alpha, 2}(\mathcal{O})),~ \alpha\in (0,1), ~p\in (2,\infty).$$
Denote by $\mathfrak{L}_{(\rho^\varepsilon, \mathbf{u}^\varepsilon, \rho^\varepsilon\mathbf{u}^\varepsilon)}$ the joint law of $\rho^\varepsilon, \mathbf{u}^\varepsilon, \rho^\varepsilon\mathbf{u}^\varepsilon$, we next show that the family of measures $\mathfrak{L}_{(\rho^\varepsilon, \mathbf{u}^\varepsilon, \rho^\varepsilon\mathbf{u}^\varepsilon)}$  is tight in $X$.

\begin{lemma} The family of measures $\mathfrak{L}_{(\rho^\varepsilon, \mathbf{u}^\varepsilon, \rho^\varepsilon\mathbf{u}^\varepsilon)}$  is tight in the path space $X$.
\end{lemma}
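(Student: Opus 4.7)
The plan is to verify tightness of each marginal law $\mathfrak{L}_{\rho^\varepsilon}$, $\mathfrak{L}_{\mathbf{u}^\varepsilon}$ and $\mathfrak{L}_{\rho^\varepsilon \mathbf{u}^\varepsilon}$ separately in the three factors of $X$, since tightness of the joint law then follows at once from the fact that a product of relatively compact sets is relatively compact. For each marginal I will exhibit an explicit family $\{K_R\}_{R>0}$ of sub-level sets built from the a priori estimates of Lemmas \ref{lem3.1*}--\ref{lem3.2}, establish their relative compactness in the corresponding path space via an Aubin-Lions-Simon type criterion, and use Chebyshev's inequality on the relevant moment bounds to fix $R$ large enough that $\mathrm{P}((\rho^\varepsilon, \mathbf{u}^\varepsilon, \rho^\varepsilon \mathbf{u}^\varepsilon) \notin K_R^\rho \times K_R^u \times K_R^{pu}) < \delta$ uniformly in $\varepsilon$; Prokhorov's theorem will then conclude.

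For the density I intend to take
\begin{equation*}
K_R^\rho = \left\{ \rho : \|\rho\|_{L^\infty(\mathcal{O}_t)} \leq M, \ \|\partial_t \rho\|_{L^\infty(0,T; W^{-1,2}(\mathcal{O}))} \leq R \right\}.
\end{equation*}
The deterministic two-sided bound $m \leq \rho^\varepsilon \leq M$ from Lemma \ref{lem3.1*} renders the first constraint automatic, while Chebyshev applied to the $L^p(\Omega; L^\infty(0,T; W^{-1,2}))$ estimate in Lemma \ref{lem3.2} controls the second in probability. Since on the bounded smooth domain $\mathcal{O}$ the Rellich-Kondrachov embedding $W^{\alpha,1}(\mathcal{O}) \hookrightarrow L^1(\mathcal{O})$ is compact for any $\alpha \in (0,1)$, its adjoint $L^\infty(\mathcal{O}) \hookrightarrow W^{-\alpha,\infty}(\mathcal{O})$ is also compact; combined with the continuous embedding $W^{-\alpha,\infty}(\mathcal{O}) \hookrightarrow W^{-1,2}(\mathcal{O})$ obtained by duality from $W^{1,2}_0(\mathcal{O}) \hookrightarrow W^{\alpha,1}(\mathcal{O})$, Simon's endpoint refinement of Aubin-Lions yields that $K_R^\rho$ is relatively compact in $C([0,T]; W^{-\alpha,\infty}(\mathcal{O}))$, hence in $L^\infty(0,T; W^{-\alpha,\infty}(\mathcal{O}))$.

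For the velocity I would use
\begin{equation*}
K_R^u = \left\{ \mathbf{u} : \|\mathbf{u}\|_{L^2(0,T; V)} \leq R, \ \sup_{\theta \in (0, 1 \wedge T)} \theta^{-1/2} \int_0^{T-\theta} \|\mathbf{u}(t + \theta) - \mathbf{u}(t)\|^2_{W^{-1,3/2}(\mathcal{O})} dt \leq R \right\},
\end{equation*}
which is relatively compact in $L^2(0,T; H)$ by the fractional-time Aubin-Lions-Simon criterion applied along the chain $V \hookrightarrow H \hookrightarrow W^{-1,3/2}(\mathcal{O})$ with the first embedding compact. The uniform bound $\mathbf{u}^\varepsilon \in L^p(\Omega; L^2(0,T; V))$ of Lemma \ref{lem3.1} and the fractional time-increment estimate of Lemma \ref{lem3.2} supply the two defining bounds, and Chebyshev gives $\mathrm{P}(\mathbf{u}^\varepsilon \notin K_R^u) < \delta$ for $R$ large. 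The treatment of $\rho^\varepsilon \mathbf{u}^\varepsilon$ is entirely analogous: the $L^p(\Omega; L^\infty(0,T; L^2(\mathcal{O})))$ bound from Lemma \ref{lem3.1} paired with the fractional-in-time $V'$-increment estimate \eqref{3.28*} produces a compact set $K_R^{pu}$ in $L^2(0,T; W^{-\alpha,2}(\mathcal{O}))$ via the compact embedding $L^2(\mathcal{O}) \hookrightarrow W^{-\alpha,2}(\mathcal{O})$ together with the continuous inclusion $W^{-\alpha,2}(\mathcal{O}) \hookrightarrow V'$.

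I expect the chief subtlety to lie in the $L^\infty$-in-time endpoint compactness for $\rho^\varepsilon$: the classical $L^p$-in-time Aubin-Lions lemma with $p < \infty$ does not apply directly, so one must invoke Simon's refinement that upgrades a uniform bound in the strong space $L^\infty(\mathcal{O})$ together with a time-derivative bound in the weak space $W^{-1,2}(\mathcal{O})$ to equicontinuity in $C([0,T]; W^{-\alpha,\infty}(\mathcal{O}))$. The consistency of this triple hinges on the embedding $W^{-\alpha,\infty}(\mathcal{O}) \hookrightarrow W^{-1,2}(\mathcal{O})$, so that the interpolation between $L^\infty(\mathcal{O})$ and $W^{-1,2}(\mathcal{O})$ passing through $W^{-\alpha,\infty}(\mathcal{O})$ delivers the modulus of continuity in time needed to pass from pointwise-in-time boundedness to uniform-in-time convergence along subsequences.
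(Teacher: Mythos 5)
Your proposal is correct and follows essentially the same route as the paper: sub-level sets built from the a priori bounds of Lemmas \ref{lem3.1*}--\ref{lem3.2}, relative compactness of each factor via Simon's criterion (Lemma \ref{Simon1}), and Chebyshev's inequality to make the complements uniformly small in probability. The only notable difference is that for $\rho^\varepsilon\mathbf{u}^\varepsilon$ you use the fractional time-increment bound \eqref{3.28*} in $V'$ rather than the paper's formal $L^2(0,T;V')$ bound on $\partial_t(\rho^\varepsilon\mathbf{u}^\varepsilon)$, which is if anything the more faithful choice given that only the increment estimate is actually established (the martingale term obstructs a genuine time-derivative bound).
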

\begin{proof}
For any $R>0$, $p>2$ and $\beta\in (\frac{1}{p}, \frac{1}{2})$, define the sets
$$
\mathscr{B}^1_R:=\left\{\mathbf{u} \in L^2(0,T;V): \int_{0}^{T}\|\mathbf{u}(t)\|_{V}^2 \, dt
+\sup_{\theta\in (0,T)}\left(\theta^{-\frac{1}{4}}
\int_{0}^{T-\theta}\| \mathbf{u}(t + \theta) - \mathbf{u}(t) \|^2_{H} \, dt\right)
\leq R\right\},
$$
$$
\mathscr{B}^2_R:=\left\{\rho \in L^\infty(\mathcal{O}_t)\cap W^{1,\infty}(0,T; W^{-1,2}(\mathcal{O})): \|\rho\|_{L^\infty(\mathcal{O}_t)}
+ \|\rho\|_{W^{1,\infty}(0,T; W^{-1,2}(\mathcal{O}))}
\leq R\right\},
$$
\begin{align*}
&\mathscr{B}^3_R:=\Big\{\rho \mathbf{u} \in L^\infty(0,T; L^2(\mathcal{O}))\cap W^{\beta,p}(0,T; W^{-3,2}(\mathcal{O})): \nonumber\\
&\qquad\qquad\qquad\qquad\qquad\|\rho \mathbf{u}\|_{L^\infty(0,T; L^2(\mathcal{O}))}
+\|\rho  \mathbf{u}\|_{W^{\beta,p}(0,T; W^{-3,2}(\mathcal{O}))}
\leq R\Big\}.
\end{align*}

According to Lemma \ref{Simon1}, we know the set $\mathscr{B}^i_R, i=1,2,3$ is relative compact in  $L^2(0,T;H)$, $C([0,T]; W^{-\alpha,p}(\mathcal{O}))$, $C([0,T]; W^{-\alpha, 2}(\mathcal{O}))$ respectively. Then the set $\mathscr{B}_R=\mathscr{B}^1_R\times \mathscr{B}^2_R\times\mathscr{B}^3_R$ is relative compact in $X$.  By Lemmas \ref{lem3.1}, \ref{lem3.2} and Chebyschev's inequality, we see
$$
\mathrm{P}(\mathbf{u}^\varepsilon\in \mathscr{B}^1_R)=1-\mathrm{P}(\mathbf{u}^\varepsilon\in (\mathscr{B}^1_R)^c)$$
$$
\geq 1-\frac{1}{R}\mathrm{E}\left(\int_{0}^{T}\|\mathbf{u}^\varepsilon(t)\|_{V}^2
dt+\sup_{\theta\in (0,T)}\left(\theta^{-\frac{1}{4}}\int_{0}^{T-\theta}\| \mathbf{u}^{\varepsilon}(t + \theta) - \mathbf{u}^{\varepsilon}(t) \|^2_{H} dt\right)\right)
$$
\begin{align}\label{com3}
\geq 1-\frac{C(m,M,\kappa,T)}{R}.
\end{align}
Similarly, we have
$$\mathrm{P}(\mathbf{\rho}^\varepsilon\in \mathscr{B}^2_R)\geq 1-\frac{C(m,M,\kappa,T)}{R},~~\mathrm{P}(\mathbf{\rho}^\varepsilon\mathbf{u}^\varepsilon\in \mathscr{B}^3_R)\geq 1-\frac{C(m,M,\kappa,T)}{R},
$$
which along with \eqref{com3} imply that  for any $\epsilon'>0$ and every $\varepsilon$, there exists $R(\epsilon')$ such that
$$\mathrm{P}((\mathbf{\rho}^\varepsilon, \mathbf{u}^\varepsilon, \mathbf{\rho}^\varepsilon\mathbf{u}^\varepsilon)\in \mathscr{B}_{R(\epsilon')})\geq 1-\epsilon',
$$
thus, the family of measures $\mathfrak{L}_{(\rho^\varepsilon, \mathbf{u}^\varepsilon, \rho^\varepsilon\mathbf{u}^\varepsilon)}$ is tight in $X$,  as desired.
\end{proof}
Furthermore, since $ W$ is only one element, also for $\rho_0, \mathbf{u}_0$,  we have that the family of measures $\mathfrak{L}_{(\rho^\varepsilon, \mathbf{u}^\varepsilon, \rho^\varepsilon\mathbf{u}^\varepsilon, \rho_0, \mathbf{u}_0, W)}$ is tight in the path space $X\times L^\infty(\mathcal{O})\times H\times C([0,T]; H_0)$.\\

The following Skorokhod-Jakubowski representation theorem will be used to represent a weakly convergent
probability measure sequence on a topological space as the distribution of a pointwise convergent
random variable sequence.

\begin{proposition}\label{pro3.1}\cite[Theorem 4.6]{brz} If $(E, \mathcal{E})$ is a topological space such that  there exists a sequence
of continuous functions $h_n: E\rightarrow \mathbb{R}$ that separates points of $E$. If the set of probability measures $\{\mu_n\}_{n\geq 1}$ on $(E, \mathcal{E})$ is tight, then there exist a subsequence $\{\mu_{n_k}\}_{k\geq 1}$ of $\{\mu_n\}_{n\geq 1}$, a probability space $(\Omega, \mathcal{F}, \mathrm{P})$ and a sequence of $E$-valued random variables $\{\mathbf{u}_k\}_{k\geq 1}, \mathbf{u}$ such that the law of $\mathbf{u}_k$ is $\mu_{n_k}$, for every $k\in \mathbb{N}^+$ and $\mathbf{u}_k\rightarrow \mathbf{u}$, $\mathrm{P}$ a.s. as $k\rightarrow \infty$.





\end{proposition}

For any $\alpha\in (0,1)$, $p\in (2,\infty)$,  $H$, $L^2(0,T;H)$, $C([0,T]; W^{-\alpha, p}(\mathcal{O}))$,  $C([0,T]; W^{-\alpha, 2}(\mathcal{O}))$ and $C([0,T]; H_0)$ are Polish spaces, hence such continuous functions indeed exist in  $H$, $L^2(0,T;H)$, $C([0,T]; W^{-\alpha, p}(\mathcal{O}))$, $C([0,T]; W^{-\alpha, 2}(\mathcal{O}))$ and $C([0,T]; H_0)$. Moreover, $L^1(\mathcal{O})$ is a Polish space, then there exists a sequence $\{h_n\}_{n\geq 1}$ such that $h_n: L^\infty(\mathcal{O})\rightarrow \mathbb{R}$ is a continuous function, and  $\{h_n\}_{n\geq 1}$ separates points of $L^\infty(\mathcal{O})$.
The conditions in the Proposition \ref{pro3.1} are verified, we infer that there exist a new probability space $\mathcal{S}_1=(\widetilde{\Omega}, \widetilde{\mathcal{F}}, \widetilde{\mathrm{P}})$ and a sequence of random variables $\left\{\widetilde{\rho}^\varepsilon, \widetilde{\mathbf{u}}^\varepsilon, \widetilde{\rho}^\varepsilon\widetilde{\mathbf{u}}^\varepsilon, \widetilde{\rho}^\varepsilon(0), \widetilde{\mathbf{u}}^\varepsilon(0), \widetilde{W}^\varepsilon\right\}_{\varepsilon\in (0,1)}$ and $ \widetilde{\rho}, \widetilde{\mathbf{u}}, \widetilde{\varrho}, \widetilde{\rho}(0), \widetilde{\mathbf{u}}(0), \widetilde{W}$ such that their laws are $\mathfrak{L}_{(\rho^\varepsilon, \mathbf{u}^\varepsilon, \rho^\varepsilon\mathbf{u}^\varepsilon, \rho_0, \mathbf{u}_0, W)}$,
moreover \begin{align}\label{3.29}(\widetilde{\rho}^\varepsilon, \widetilde{\mathbf{u}}^\varepsilon, \widetilde{\rho}^\varepsilon\widetilde{\mathbf{u}}^\varepsilon)\rightarrow  (\widetilde{\rho}, \widetilde{\mathbf{u}}, \widetilde{\varrho}),~ {\rm in}~ X,\end{align}
and
\begin{align}\widetilde{W}^\varepsilon\rightarrow \widetilde{W}, ~{\rm in} ~C([0,T]; H_0),\end{align}
 $\widetilde{\mathrm{P}}$ a.s. as $\varepsilon\rightarrow 0$. Since $\widetilde{W}^\varepsilon$ has the same distribution with $W$, then we can write $\widetilde{W}^\varepsilon(t,\omega)=\sum_{k\geq 1}\sqrt{\lambda_{k}}\mathbf{e}_{k}\widetilde{W}^\varepsilon_k(t,\omega)$ and $\widetilde{W}(t,\omega)=\sum_{k\geq 1}\sqrt{\lambda_{k}}\mathbf{e}_{k}\widetilde{W}_k(t,\omega)$, $\{\widetilde{W}^\varepsilon_{k}\}_{k\geq 1}$, $\{\widetilde{W}_{k}\}_{k\geq 1}$ are two sequences of independent standard $\widetilde{\mathcal{F}}_t$-adapted one-dimension Brownian motions. Since initial data $(\rho_0, \mathbf{u}_0)$ is deterministic, the equivalence of distributions of $(\rho_0, \mathbf{u}_0)$ and $(\widetilde{\rho}^\varepsilon(0), \widetilde{\mathbf{u}}^\varepsilon(0))$ implies that 
 $$(\widetilde{\rho}^\varepsilon(0), \widetilde{\mathbf{u}}^\varepsilon(0))=(\widetilde{\rho}(0), \widetilde{\mathbf{u}}(0))=(\rho_0, \mathbf{u}_0), ~{\rm \widetilde{P}} ~{\rm a.s.}$$
And from the equivalence of distributions of  $(\widetilde{\rho}^\varepsilon, \widetilde{\mathbf{u}}^\varepsilon, \widetilde{\rho}^\varepsilon\widetilde{\mathbf{u}}^\varepsilon)$ and $(\rho^\varepsilon, \mathbf{u}^\varepsilon, \rho^\varepsilon\mathbf{u}^\varepsilon)$, we infer that they share the same estimates
 \begin{align}\label{1.23**}
 \mathrm{E}^{\widetilde{\mathrm{P}}}\left(\sup_{0\leq t\leq T}\|\widetilde{\mathbf{u}}^\varepsilon(t)\|_H^{2p}\right)+\mathrm{E}^{\widetilde{\mathrm{P}}}\left(\int_{0}^{T}\|\nabla \widetilde{ \mathbf{u}}^\varepsilon(t)\|_{L^2(\mathcal{O})}^2dt\right)^p \lesssim_{m,M, c_2, c_4, p,\kappa,T} C,
 \end{align}
 and
 \begin{align}\label{1.23***}
 \|\widetilde{\rho}^\varepsilon\|_{L^\infty(\mathcal{O}_t)}^p \lesssim_{m, M, p}C,
 \end{align}
for any $p\geq 1$, $ \mathrm{E}^{\widetilde{\mathrm{P}}}$  is the expectation with respect to $\widetilde{\mathrm{P}}$, the constant $C$ is independent of $\varepsilon$. Furthermore, by \eqref{1.23**} and \eqref{1.23***} we have
 \begin{align}\label{1.23****}
 \mathrm{E}^{\widetilde{\mathrm{P}}}\left(\sup_{0\leq t\leq T}\|\widetilde{\rho}^\varepsilon(t)\widetilde{\mathbf{u}}^\varepsilon(t)\|_{L^2(\mathcal{O})}^p\right) \lesssim_{m,M, c_2, c_4,p,\kappa,T}C,
 \end{align}
for any  $p\geq 2$.

We verify that actually $\widetilde{\varrho}=\widetilde{\rho}\widetilde{\mathbf{u}}$, $\widetilde{\mathrm{P}}$ a.s. Indeed, by \eqref{1.23***} we infer there exists $\widetilde{\rho}\in L^p(\widetilde{\Omega}; L^\infty(\mathcal{O}_t))$ such that (up to a subsequence)
$$\widetilde{\rho}^\varepsilon\rightarrow \widetilde{\rho},~ {\rm weak^\ast ~in}~ L^\infty(\mathcal{O}_t),~ \widetilde{\mathrm{P}}~{\rm a.s.}$$
which along with $\widetilde{\mathbf{u}}^\varepsilon\rightarrow \widetilde{\mathbf{u}}$ in $L^2(0,T; H)$, $\widetilde{\mathrm{P}}$ a.s. leads to
$$\widetilde{\rho}^\varepsilon\widetilde{\mathbf{u}}^\varepsilon\rightarrow \widetilde{\rho}\widetilde{\mathbf{u}}, ~ {\rm weak ~in}~ L^2(\mathcal{O}_t).$$
Moreover, we know $\widetilde{\rho}^\varepsilon\widetilde{\mathbf{u}}^\varepsilon\rightarrow \widetilde{\varrho}$ in $L^2(0,T; W^{-\alpha,2}(\mathcal{O}))$, then we can identify the limit.

We also have that  on the new probability space $\mathcal{S}_1$, for $\widetilde{\mathrm{P}}$ a.s. $\omega$, it holds for every $\varepsilon\in (0,1)$, $t\in [0,T]$ and $\phi\in H^1, \varphi\in V$
 \begin{eqnarray}\label{equ1*}
\left\{\begin{array}{ll}
\!\!\!(\widetilde{\rho}^{\varepsilon}(t), \phi)-(\rho_0, \phi)-\int_{0}^{t}(\widetilde{\rho}^\varepsilon(s) \widetilde{\mathbf{u}}^{\varepsilon}(s), \nabla\phi)ds=0,\\\\
\!\!\!(\widetilde{\rho}^\varepsilon(t)\widetilde{\mathbf{u}}^\varepsilon(t), \varphi)=(\rho_0\mathbf{u}_0, \varphi)-\int_0^t(A^\varepsilon \widetilde{\mathbf{u}}^\varepsilon(s), \varphi)_{ V'\times V}ds+\int_0^t(\widetilde{\rho}^\varepsilon(s)\widetilde{\mathbf{u}}^\varepsilon(s)\otimes \widetilde{\mathbf{u}}^\varepsilon(s), \nabla\varphi)ds\nonumber\\
\!\!\!\qquad+\int_{0}^{t}(f^\varepsilon( \widetilde{\mathbf{u}}^{\varepsilon}(s)), \varphi)ds+\int_{0}^{t}(g(\widetilde{\mathbf{u}}^{\varepsilon}(s))d\widetilde{W}^\varepsilon, \varphi). 
\end{array}\right.
\end{eqnarray}
For more details of the proof, we refer to \cite{cww, ww}.

\section{Homogenization problem}
Let us discuss the homogenization problem in this section.  We begin by introducing the Sobolev space and the two-scale convergence. 
Denote by $C^\infty_{per}(D)$ all the $D$-periodic infinite differential functions on $\mathbb{R}_y^d$. Let $V(\mathbb{R}_y^d)$ be the space of all functions satisfying $\nabla_y\mathbf{f}\in L^2(\mathbb{R}_y^d)$, and
$V_{per}$ be the space of all the $D$-periodic functions in $V(\mathbb{R}_y^d)$ with the norm
$$\|\mathbf{f}\|^2_{V_{per}}=\int_{D}|\nabla\mathbf{f}(y)|^2dy,$$
which is a Hilbert space. 

The following two-scale convergence results will be used in our setting.
\begin{lemma}\label{lem2.1} \cite[Theorem 4.1]{ngu}
For any $p\in (1,\infty)$, let $\{\mathbf{u}^\varepsilon\}_{\varepsilon\in (0,1)}$ be a bounded sequence in $L^p(\mathcal{O}_t)$ uniformly in $\varepsilon$, then there exists a subsequence of $\{\mathbf{u}^\varepsilon\}_{\varepsilon\in (0,1)}$ which is weak-$\Sigma$ convergent in $L^p( \mathcal{O}_t)$.
\end{lemma}

\begin{lemma}\label{lem4.2*}\cite[Lemma 6.2]{Ngu}
Let $\{\mathbf{u}^\varepsilon\}_{\varepsilon\in (0,1)}$ be a bounded sequence in $L^2(0,T;V)$ uniformly in $\varepsilon$, then there exist a subsequence of $\{\mathbf{u}^\varepsilon\}_{\varepsilon\in (0,1)}$ (still denoted by $\{\mathbf{u}^\varepsilon\}_{\varepsilon\in (0,1)}$) and a $L^2(\mathcal{O}_t; L^2_{per}(D))$-valued function $\overline{\mathbf{u}}$ such that
\begin{align*}
\frac{\partial \mathbf{u}^\varepsilon}{\partial x_i}\rightarrow \frac{\partial \mathbf{u}}{\partial x_i}+\frac{\partial \overline{\mathbf{u}}}{\partial y_i}, ~in ~ L^2(\mathcal{O}_t), ~weak-\Sigma.
\end{align*}
\end{lemma}


The following result  provides a way to passage to the limit of a sequence of product functions.
\begin{lemma}\label{lem4.3} \cite[Proposition 4.7]{ngu} For any $r, p,q>1$ with $\frac{1}{r}=\frac{1}{p}+\frac{1}{q}$, if the following two conditions hold

i. a sequence of functions $\{\mathbf{u}^\varepsilon\}_{\varepsilon\in (0,1)}$ is weak-$\Sigma$ convergent to some certain $\mathbf{u}\in L^p( \mathcal{O}_t; L^p_{per}(D))$;

ii. a sequence of functions $\{\mathbf{v}^\varepsilon\}_{\varepsilon\in (0,1)}$ is strong-$\Sigma$ convergent to some certain $\mathbf{v}\in L^q( \mathcal{O}_t; L^q_{per}(D))$.

Then, we have that the sequence of $\{\mathbf{u}^\varepsilon\mathbf{v}^\varepsilon\}_{\varepsilon\in (0,1)}$ is weak-$\Sigma$ convergent to $\mathbf{u}\mathbf{v}$ in $L^r( \mathcal{O}_t)$.
\end{lemma}

The Vitali convergence theorem is applied to identifying the limit.
\begin{theorem}\cite[Chapter 3]{Kallenberg}\label{thm4.1} Let $p\geq 1$, $\{\mathbf{u}_\varepsilon\}_{\varepsilon\in(0,1)}\in L^p$ and $\mathbf{u}_\varepsilon\rightarrow \mathbf{u}$ in probability. Then, the following are equivalent:\\
i. $\mathbf{u}_\varepsilon\rightarrow \mathbf{u}$ in $L^p$;\\
ii. the sequence $|\mathbf{u}_\varepsilon|^p$ is uniformly integrable;\\
iii. $\mathrm{E}\left(|\mathbf{u}_\varepsilon|^p\right)\rightarrow \mathrm{E}\left(|\mathbf{u}|^p\right)$.
\end{theorem}

Recall that $\{\widetilde{\rho}^\varepsilon, \widetilde{\mathbf{u}}^{\varepsilon}, \widetilde{\rho}^\varepsilon\widetilde{\mathbf{u}}^{\varepsilon}, \widetilde{W}^\varepsilon\}_{\varepsilon\in (0,1)}$ is the sequence we chosen from the Skorokhod-Jakubowski representation theorem, which satisfies equations \eqref{equ1*} with uniform estimates \eqref{1.23**}-\eqref{1.23****}. If no confusion occurs, we still use $\rho^\varepsilon, \mathbf{u}^\varepsilon, \rho^\varepsilon\mathbf{u}^\varepsilon, W^\varepsilon, \mathrm{E}$ instead of $\widetilde{\rho}^\varepsilon, \widetilde{\mathbf{u}}^{\varepsilon}, \widetilde{\rho}^\varepsilon\widetilde{\mathbf{u}}^{\varepsilon}, \widetilde{W}^\varepsilon, \mathrm{E}^{\widetilde{\mathrm{P}}}$. From Proposition \ref{pro3.1}, we see $\mathrm{P}$ a.s.
\begin{align}\label{2.1}
(\rho^\varepsilon, \mathbf{u}^\varepsilon, \rho^\varepsilon\mathbf{u}^\varepsilon)\rightarrow  (\rho, \mathbf{u}, \rho\mathbf{u}),~ {\rm in}~ X,~ W^\varepsilon\rightarrow W,~ {\rm in}~ C([0,T]; H_0).
\end{align}
Then, the convergence $\mathbf{u}^\varepsilon\rightarrow \mathbf{u}$ in $L^2(\mathcal{O}_t)$, $\mathrm{P}$ a.s. implies 
\begin{align}\label{4.1*}
\int_{0}^{T}\|\mathbf{u}^\varepsilon(t)\|^2_Hdt\lesssim_{\omega, m, M, c_2, c_4, \kappa,T} C,
\end{align}
where $C$ is independent of $\varepsilon$.

We next establish the following uniform regularity estimates of the sequence of new processes $\{\mathbf{u}^\varepsilon\}_{\varepsilon\in (0,1)}$.
\begin{lemma}\label{lemapx.1} If $(\mathbf{A.2})-(\mathbf{A.4})$ hold and \(\mathbf{u}_0 \in H\), \(0 < m \leq \rho_0 \leq M<\infty\), then for any $T>0$, every  $s\in[0,T]$,  the following uniform estimate of the new process $\mathbf{u}^\varepsilon$ in $\varepsilon$ holds
	\begin{align*}
		\int_{0}^{s}\|\nabla \mathbf{u}^\varepsilon(r)\|_{L^2(\mathcal{O})}^2dr\lesssim_{\omega, m, M, c_2, c_4, T, \kappa} C,
	\end{align*}
	where the positive constant $C$ is independent of  $\varepsilon$.
\end{lemma}
	\begin{proof} Similar to \eqref{priori2.12}-\eqref{1.6} we see for every $s\in [0,T]$
		\begin{align}\label{priori2.12_r}
			&\int_\mathcal{O} |\sqrt{\rho^{\varepsilon}(s)} \mathbf{u}^{\varepsilon}(s)|^2 \, dx+2\kappa\int_0^s \|\nabla \mathbf{u}^\varepsilon(r)\|_{L^2 (\mathcal{O})}\,dr \nonumber\\
			& \leq 
			\int_\mathcal{O} |\sqrt{\rho^{\varepsilon}(s)} \mathbf{u}^{\varepsilon}(s)|^2 \, dx+2\int_0^s (A^\varepsilon \mathbf{u}^\varepsilon(r),
			\mathbf{u}^\varepsilon(r))_{V'\times V}\,dr \nonumber\\
			&=  \int_\mathcal{O} |\sqrt{\rho_0} \mathbf{u}_0|^2 \, dx + 2\int_0^s \int_\mathcal{O}
			\mathbf{u}^{\varepsilon}(r)
			f\!\left(\frac{x}{\varepsilon},r,\mathbf{u}^{\varepsilon}(r)\right)
			\, dx dr \nonumber\\
			&\quad + 2\int_0^s 
			(g(\mathbf{u}^{\varepsilon}(r)) dW^\varepsilon, \mathbf{u}^{\varepsilon}(r)) + \int_0^s  \| g(\mathbf{u}^{\varepsilon}(r))\|^2_{L_2(H;H)}dr .
		\end{align}
		Using  $(\mathbf{A.2})$, $(\mathbf{A.4})$, we see
		\begin{align}\label{3.11*}
			\left|2\int_0^s\int_\mathcal{O}  \mathbf{u}^{\varepsilon}(r) f\left(\frac{x}{\varepsilon}, r, \mathbf{u}^{\varepsilon}(r)\right) \, dx dr\right|\leq c_2\int_0^s (1+3\|\mathbf{u}^{\varepsilon}(r)\|^2_{H})dr,
		\end{align}
		and
		\begin{align}\label{3.12*}
			\int_0^s \| g(\mathbf{u}^{\varepsilon}(r))\|^2_{L_2(H;H)}dr\leq c_4\int_0^s(1+\|\mathbf{u}^\varepsilon(r)\|_{H}^2)dr.
		\end{align}
By \eqref{4.1*} we know that for $\mathrm{P}$ a.s. $\omega$
\begin{align*}
\int_0^s\|\mathbf{u}^\varepsilon(r)\|_{H}^2dr ~{\rm is~ bounded~ uniformly~ in}~ \varepsilon,
\end{align*}
which along with \eqref{3.11*}, \eqref{3.12*} yields that  for $\mathrm{P}$ a.s. $\omega$
\begin{align}\label{6.4}
\left|2\int_0^s\int_\mathcal{O}  \mathbf{u}^{\varepsilon}(r) f\left(\frac{x}{\varepsilon}, r, \mathbf{u}^{\varepsilon}(r)\right) \, dx dr\right|+\int_0^s \| g(\mathbf{u}^{\varepsilon}(r))\|^2_{L_2(H;H)}dr ~{\rm is~ bounded~ uniformly~ in}~ \varepsilon.
\end{align}

Let $\widetilde{G}^\varepsilon(s)=2\int_0^s 
			(g(\mathbf{u}^{\varepsilon}(r)) dW^\varepsilon, \mathbf{u}^{\varepsilon}(r))$, as \eqref{n1} we have
\begin{align}\label{n1*}\left|\widetilde{G}^\varepsilon(t)-\widetilde{G}^\varepsilon(s)\right|\leq K(\omega)|t-s|^\gamma,\end{align}
where $K(\omega)=\sup_{0\leq s<t\leq T}\frac{\left|\widetilde{G}^\varepsilon(t)-\widetilde{G}^\varepsilon(s)\right|}{|t-s|^\gamma}$ is a random variable. Similar to \eqref{K1} we have for any $p>2$, $\gamma<\alpha-\frac{1}{p}$ and $\alpha\in (\frac{1}{p}, \frac{1}{2})$, $\mathrm{E}(K^p(\omega)) \lesssim_{m,M, c_2, c_4,\kappa,T,p} C$. From \eqref{n1*} we obtain
\begin{align*}\mathrm{E}\left(\sup_{|t-s|<\delta}\left|\widetilde{G}^\varepsilon(t)-\widetilde{G}^\varepsilon(s)\right|^p\right)\leq \delta^{\gamma p} \mathrm{E}(K^p(\omega))\lesssim_{m,M, c_2, c_4,\kappa,T,p} C \delta^{\gamma p},\end{align*}
which along with Chebyshev's inequality leads to
\begin{align}
\mathrm{P}\left(\sup_{|t-s|<\delta}\left|\widetilde{G}^\varepsilon(t)-\widetilde{G}^\varepsilon(s)\right|>\epsilon'\right)\leq \frac{\mathrm{E}\left(\sup_{|t-s|<\delta}\left|\widetilde{G}^\varepsilon(t)-\widetilde{G}^\varepsilon(s)\right|^p\right)}{(\epsilon')^p}\lesssim_{m,M, c_2, c_4,\kappa,T, p} \frac{ C \delta^{\gamma p}}{(\epsilon')^p},
\end{align}
thus we establish the equicontinuous of $\widetilde{G}^\varepsilon(s)$ in $C([0,T])$ in probability. Moreover, by \eqref{1.23**}, $(\mathbf{A.4})$ and the Burkholder-Davis-Gundy inequality we have
\begin{align*}
\mathrm{E}\left(\sup_{s\in [0,T]}\left|\widetilde{G}^\varepsilon(s)\right|^2\right)&\leq C\mathrm{E}\left(\int_{0}^{T}\sum_{k\geq 1}\left(\int_\mathcal{O}  \mathbf{u}^\varepsilon(r)g(\mathbf{u}^\varepsilon(r))Q^{1/2}\mathbf{e}_k  \, dx\right)^2 dr\right)\nonumber\\
&\leq Cc_4\mathrm{E}\left(\int_{0}^{T}\| \mathbf{u}^\varepsilon(r)\|_{H}^2(1+\| \mathbf{u}^\varepsilon(r)\|_{H}^2 )dr\right) \nonumber\\
&\lesssim_{m,M, c_2, c_4,\kappa,T,p} C,
\end{align*}
which implies the boundedness of $\widetilde{G}^\varepsilon(s)$  in $C([0,T])$  in  probability. The Arzelà-Ascoli theorem gives that there exists $\widetilde{G}$ such that $\widetilde{G}^\varepsilon\rightarrow \widetilde{G}$ in $C([0,T])$  in  probability.  Then we can abstract a subsequence of $\left\{\widetilde{G}^\varepsilon\right\}_{\varepsilon\in (0,1)}$ (which is not relabelled) such that for  P a.s. $\omega$
$$\widetilde{G}^\varepsilon(s)\rightarrow \widetilde{G}(s), ~{\rm in}~   C([0,T]).$$ We further obtain for every $s\in [0,T]$, and  P a.s. $\omega$, 
\begin{align}\label{n1**}\widetilde{G}^\varepsilon(s)\rightarrow \widetilde{G}(s), ~{\rm as}~ \varepsilon\rightarrow 0.\end{align}

We need to confirm that $$\widetilde{G}(s)=2\int_0^s 
			(g(\mathbf{u}(r)) dW, \mathbf{u}(r)).$$
By \eqref{2.1} and  $(\mathbf{A.3})$, we see $g(\mathbf{u}^{\varepsilon}(r))\rightarrow g(\mathbf{u}(r))$ in $L^2(0,T; L_2(H;H))$, $\mathrm{P}$ a.s. which together with $W^\varepsilon\rightarrow W,~ {\rm in}~ C([0,T]; H_0)$ and \cite[Lemma 2.1]{ANR} leads to
\begin{align*}
2\int_0^s 
			g(\mathbf{u}^{\varepsilon}(r)) \, dW^\varepsilon \rightarrow 2\int_0^s 
			g(\mathbf{u}(r)) \, dW
\end{align*}
in probability in $L^2(\mathcal{O}_t)$. Hence, there exists a subsequence (which is not relabelled) such that 
\begin{align}\label{6.5}
2\int_0^s 
			g(\mathbf{u}^{\varepsilon}(r)) \, dW^\varepsilon \rightarrow 2\int_0^s 
			g(\mathbf{u}(r)) \, dW
\end{align}
 in $L^2(\mathcal{O}_t)$, $\mathrm{P}$ a.s.  Then, the convergence \eqref{6.5} together with $\mathbf{u}^{\varepsilon}\rightarrow \mathbf{u}$ in $L^2(\mathcal{O}_t)$ , $\mathrm{P}$ a.s. gives
 \begin{align}\label{6.51*}
\left| \int_{0}^{T}\widetilde{G}^\varepsilon(s)ds-2\int_{0}^{T}\left(\int_0^s
			(g(\mathbf{u}(r)) dW, \mathbf{u}(r)\right)ds\right|\rightarrow 0.
\end{align}
We further infer from \eqref{6.51*} that there exists a subsequence (which is not relabelled) such that for almost everywhere $s\in [0,T]$ and $\mathrm{P}$ a.s. $\omega$
 \begin{align}\label{6.5*}
 \widetilde{G}^\varepsilon(s) \rightarrow 2\int_0^s
			\left(g(\mathbf{u}(r)) dW, \mathbf{u}(r)\right).
\end{align}
 By \eqref{n1**} and \eqref{6.5*}, we get the conclusion.

By $(\mathbf{A.4})$,  \eqref{1.23**} and the Burkholder-Davis-Gundy inequality we also have for every $s\in [0,T]$
		$$
		\mathrm{E}\left|\int_{0}^{s}(g(\mathbf{u}(r)) dW, \mathbf{u}(r))\right|^2 
\lesssim_{m, M, c_2, c_4, \kappa, T} C,$$
which implies that for  $\mathrm{P}$ a.s. $\omega$
\begin{align}\label{6.6}
		\left|\int_{0}^{s}(g(\mathbf{u}(r)) dW, \mathbf{u}(r))\right| ~{\rm is~ bounded~ uniformly~ in}~ \varepsilon.
\end{align}
By \eqref{n1**} and \eqref{6.6} we have  that for every $s\in [0,T]$ and $\mathrm{P}$ a.s. $\omega$ the sequence 
\begin{align}\label{6.5**}
\left|\widetilde{G}^\varepsilon(s)\right| ~{\rm is~ bounded~ uniformly~ in}~ \varepsilon.
\end{align}
	Finally, by \eqref{priori2.12_r}, \eqref{6.4} and \eqref{6.5**} we conclude that for every $s\in [0,T]$ and $\mathrm{P}$ a.s. $\omega$, the sequence
\begin{align*}
		\int_{0}^{s}\|\nabla \mathbf{u}^\varepsilon(r)\|_{L^2(\mathcal{O})}^2dr ~{\rm is~ bounded~ uniformly~ in}~ \varepsilon.
	\end{align*}
This completes the proof.
	\end{proof}

Then, by  Lemma \ref{lem4.2*} and Lemma \ref{lemapx.1}, we have the following convergence immediately.
\begin{corollary}  There exists a $L^2(\mathcal{O}_t; L^2_{per}(D))$-valued process $\overline{\mathbf{u}}$  such that for $\mathrm{P}$ a.s. $\omega$
\begin{align}\label{4.4}
\frac{\partial \mathbf{u}^\varepsilon}{\partial x_i}\rightarrow \frac{\partial \mathbf{u}}{\partial x_i}+\frac{\partial \overline{\mathbf{u}}}{\partial y_i}, ~{\rm in} ~ L^2(\mathcal{O}_t), ~{\rm weak}-\Sigma,
\end{align}
as $\varepsilon\rightarrow 0$.
\end{corollary}


$\mathbf{Homogenization~ result}$. Based on \eqref{2.1}-\eqref{4.1*} and \eqref{4.4}, we show that the  quadruple $(\rho, \mathbf{u}, \overline{\mathbf{u}}, \rho \mathbf{u})$ solves the following equations. For every $t\in [0,T]$, denote $\mathcal{O}_s=\mathcal{O}\times [0,t]$.

\begin{proposition}\label{pro2.1} If $(\mathbf{A.1})$-$(\mathbf{A.4})$ hold,  then for every $t\in [0,T]$, and for any $\phi\in H^{1}(\mathcal{O})$, $(\varphi,\psi)\in V\times V(\mathcal{O}; V_{per})$, the quintuple $(\rho, \mathbf{u}, \overline{\mathbf{u}}, \rho \mathbf{u}, W)$ satisfies the following non-homogeneous incompressible Navier-Stokes equations  $\mathrm{P}$ a.s. in the new probability space $\mathcal{S}_1$

\begin{align}
(\rho(t), \phi)-(\rho_0, \phi)-\int_{0}^{t}(\rho(s) \mathbf{u}(s), \nabla\phi)ds=0,
\end{align}
and
\begin{align}\label{4.3}
&
(\rho(t)\mathbf{u}(t), \varphi)=(\rho_0\mathbf{u}_0, \varphi)\nonumber\\
&\quad-\sum_{i,j=1}^d\int_{\mathcal{O}_s}\int_{D}a_{i,j}(y,s)\left(\frac{\partial \mathbf{u}(x,s)}{\partial x_i}+\frac{\partial \overline{\mathbf{u}}(x, y, s)}{\partial y_i}\right)\left(\frac{\partial \varphi(x)}{\partial x_j}+\frac{\partial \psi(x,y)}{\partial y_j}\right)dxdyds\nonumber\\
&\quad+\int_0^t(\rho(s)\mathbf{u}(s)\otimes \mathbf{u}(s), \nabla\varphi)ds
+\int_{\mathcal{O}_s}\int_{D}f(y, s, \mathbf{u}(s)) \varphi dxdyds+\int_{0}^{t}(g(\mathbf{u}(s))dW, \varphi).
\end{align}
\end{proposition}
\begin{proof} Let 
	$$\Phi^\varepsilon(x): =\phi(x)+\varepsilon\chi\left(x,\frac{x}{\varepsilon}\right),$$
$$\Psi^\varepsilon(x): =\varphi(x)+\varepsilon\psi\left(x,\frac{x}{\varepsilon}\right),$$
for $x\in \mathcal{O}$, in which $\phi\in C_0^\infty(\mathcal{O})$, $\chi\in C_0^\infty(\mathcal{O})\times C^\infty_{per}( D)$, $\varphi\in C_{0,div}^\infty(\mathcal{O})$, $\psi\in C_{0, div}^\infty(\mathcal{O})\times C^\infty_{per}( D) $. Recall that for every $t\in [0,T]$, and P a.s. $\omega$, $(\rho^\varepsilon, \mathbf{u}^\varepsilon, \rho^\varepsilon\mathbf{u}^\varepsilon, W^\varepsilon)$ satisfies
\begin{eqnarray}\label{equ2}
\left\{\begin{array}{ll}
\!\!\!(\rho^{\varepsilon}(t), \Phi^\varepsilon)-(\rho_0, \Phi^\varepsilon)-\int_{0}^{t}(\rho^\varepsilon(s) \mathbf{u}^{\varepsilon}(s), \nabla\Phi^\varepsilon)ds=0,\\\\
\!\!\!(\rho^\varepsilon(t)\mathbf{u}^\varepsilon(t), \Psi^\varepsilon)=(\rho_0\mathbf{u}_0, \Psi^\varepsilon)-\int_0^t(A^\varepsilon \mathbf{u}^\varepsilon(s), \Psi^\varepsilon)_{ V'\times V}ds+\int_0^t(\rho^\varepsilon(s)\mathbf{u}^\varepsilon(s)\otimes \mathbf{u}^\varepsilon(s), \nabla\Psi^\varepsilon)ds\nonumber\\
\!\!\!\qquad+\int_{0}^{t}(f^\varepsilon( \mathbf{u}^{\varepsilon}(s)), \Psi^\varepsilon)ds+\int_{0}^{t}(g(\mathbf{u}^{\varepsilon}(s))dW^\varepsilon, \Psi^\varepsilon). 
\end{array}\right.
\end{eqnarray}

$\mathbf{Step~ 1}$. We first consider the limit of the continuity equation. 
Observe that
\begin{align}\label{rho3}
(\rho^{\varepsilon}(t), \Phi^\varepsilon)=\left(\rho^{\varepsilon}(t), \phi(x)+\varepsilon\chi\left(x,\frac{x}{\varepsilon}\right)\right).\end{align}
Applying  $\rho^\varepsilon\rightarrow \rho $ in $C([0,T]; W^{-\alpha, p}(\mathcal{O}))$, for $\mathrm{P}$ a.s. $\omega$ we have $\varepsilon\rightarrow 0$
$$\left(\rho^{\varepsilon}(t), \phi(x)\right)\rightarrow\left(\rho(t), \phi(x)\right).$$
By $\rho^\varepsilon\in L^\infty(\mathcal{O}_s)$ we get $\mathrm{P}$ a.s.
$$\left|\left(\rho^{\varepsilon}(t), \varepsilon\chi\left(x,\frac{x}{\varepsilon}\right)\right)\right|\rightarrow 0.$$
Then by \eqref{rho3} we have P a.s.
\begin{align}\label{p1}
(\rho^{\varepsilon}(t), \Phi^\varepsilon)\rightarrow \left(\rho(t), \phi(x)\right).
\end{align}
Similarly we have P a.s.
\begin{align}\label{p2}
(\rho_0, \Phi^\varepsilon)\rightarrow \left(\rho_0, \phi(x)\right).
\end{align}
Next, we focus on passing to the limit of third term in the continuity equation. 
By Lemma \ref{lem2.1} and \eqref{1.23***} we infer that there exists $\rho$ such that for $\mathrm{P}$ a.s. $\omega$ and $p\in [1,\infty)$
 \begin{align}\label{tc1}\rho^\varepsilon\rightarrow \rho, ~{\rm weak}-\Sigma ~{\rm in}~ L^p(\mathcal{O}_s). \end{align}
 Observe that $\rho$ is actually independent of $y$. 
By \eqref{2.1} we know that $\mathbf{u}^\varepsilon\rightarrow \mathbf{u}$ in $L^2(\mathcal{O}_s)$, $\mathrm{P}$ a.s. Then, we have $\mathrm{P}$ a.s.
\begin{align}\label{tc2}\mathbf{u}^\varepsilon\rightarrow \mathbf{u}, ~{\rm strong}-\Sigma~{\rm in} ~L^2(\mathcal{O}_s).\end{align}
Note that $\mathbf{u}$ is also independent of variable $y$. 
By \eqref{tc1} and \eqref{tc2}, we infer from Lemma \ref{lem4.3} that $\mathrm{P}$ a.s.
\begin{align}\label{tc3}
\rho^\varepsilon\mathbf{u}^\varepsilon\rightarrow \rho\mathbf{u}, ~{\rm weak}-\Sigma ~{\rm in}~ L^\frac{2p}{p+2}( \mathcal{O}_s).
\end{align}
Since
$$
\int_{0}^{t}(\rho^\varepsilon(s) \mathbf{u}^{\varepsilon}(s), \nabla \Phi^\varepsilon)ds
=\int_{0}^{t}(\rho^\varepsilon(s) \mathbf{u}^{\varepsilon}(s), \nabla_x \phi(x))ds$$
$$
+\int_{0}^{t}\left(\rho^\varepsilon(s) \mathbf{u}^{\varepsilon}(s), \varepsilon\nabla_x\chi\left(x,\frac{x}{\varepsilon}\right)\right)ds
$$
\begin{align}\label{tc4}
+\int_{0}^{t}\left(\rho^\varepsilon(s) \mathbf{u}^{\varepsilon}(s), \nabla_y\chi\left(x,\frac{x}{\varepsilon}\right)\right)ds.
\end{align}
For the first term on the right-hand side of \eqref{tc4}, by \eqref{tc3} we have for $\mathrm{P}$ a.s.  $\omega$ as $\varepsilon\rightarrow 0$
\begin{align}\int_{0}^{t}(\rho^\varepsilon(s) \mathbf{u}^{\varepsilon}(s), \nabla_x \phi(x))ds\rightarrow\int_{0}^{t}(\rho(s) \mathbf{u}(s), \nabla_x \phi(x))ds.\end{align}
For the second term on the right-hand side of \eqref{tc4}, we have  $\mathrm{P}$ a.s. 
$$
\left|\int_{0}^{t}\left(\rho^\varepsilon(s) \mathbf{u}^{\varepsilon}(s), \varepsilon\nabla_x\chi\left(x,\frac{x}{\varepsilon}\right)\right)ds\right|$$
$$\leq \varepsilon\left\|\nabla_x\chi\left(x,\frac{x}{\varepsilon}\right)\right\|_{L^2(\mathcal{O}; L^2_{per}( D))}\int_{0}^{t}\left\|\rho^\varepsilon(s) \mathbf{u}^{\varepsilon}(s)\right\|_{L^2(\mathcal{O})}ds.
$$
From \eqref{1.23***} and \eqref{4.1*} we see $\int_{0}^{t}\left\|\rho^\varepsilon(s) \mathbf{u}^{\varepsilon}(s)\right\|_{L^2(\mathcal{O})}ds<\infty$, $\mathrm{P}$ a.s. Then the right-hand side term converges to zero as $\varepsilon\rightarrow 0$, thus  $\mathrm{P}$ a.s. 
\begin{align}\label{tc5}
\int_{0}^{t}\left(\rho^\varepsilon(s) \mathbf{u}^{\varepsilon}(s), \varepsilon\nabla_x\chi\left(x,\frac{x}{\varepsilon}\right)\right)ds\rightarrow 0.\end{align}
For the third term on the right-hand side of \eqref{tc4}, by \eqref{tc3} we have $\mathrm{P}$ a.s. 
$$\int_{0}^{t}\left(\rho^\varepsilon(s) \mathbf{u}^{\varepsilon}(s), \nabla_y\chi\left(x,\frac{x}{\varepsilon}\right)\right)ds
$$
$$
\rightarrow\int_{\mathcal{O}_s}\int_{D}\rho(s) \mathbf{u}(s)\nabla_y\chi\left(x,y\right)dxdyds
$$
\begin{align}\label{tc6}
=\int_{\mathcal{O}_s}\rho(s) \mathbf{u}(s)\left(\int_{D}\nabla_y\chi\left(x,y\right)dy\right)  dxds=0.
\end{align}
Using \eqref{tc4}-\eqref{tc6} we obtain for $\mathrm{P}$ a.s.  $\omega$ as $\varepsilon\rightarrow 0$
\begin{align}\label{tc7}\int_{0}^{t}(\rho^\varepsilon(s) \mathbf{u}^{\varepsilon}(s), \nabla \Phi^\varepsilon)ds\rightarrow\int_{0}^{t}(\rho(s) \mathbf{u}(s), \nabla_x \phi(x))ds.\end{align}
By \eqref{p1}, \eqref{p2} and \eqref{tc7}, we see $\mathrm{P}$ a.s.
$$(\rho^{\varepsilon}(t), \Phi^\varepsilon)-(\rho_0, \Phi^\varepsilon)-\int_{0}^{t}(\rho^\varepsilon(s) \mathbf{u}^{\varepsilon}(s), \nabla\Phi^\varepsilon)ds
$$
$$
\rightarrow (\rho(t), \phi(x))- (\rho_0, \phi(x))-\int_{0}^{t}(\rho(s) \mathbf{u}(s), \nabla_x \phi(x))ds=0.
$$

$\mathbf{Step~ 2}$. We proceed to pass to the limit of the momentum equation. 
Define the functional
$$
(\rho^\varepsilon, \mathbf{u}^{\varepsilon})\mapsto\mathcal{G}_t^\varepsilon(\rho^\varepsilon, \mathbf{u}^{\varepsilon}):=
(\rho^\varepsilon(t)\mathbf{u}^\varepsilon(t), \Psi^\varepsilon)-(\rho_0\mathbf{u}_0, \Psi^\varepsilon)+\int_0^t(A^\varepsilon \mathbf{u}^\varepsilon(s), \Psi^\varepsilon)_{ V'\times V}ds
$$
$$-\int_0^t(\rho^\varepsilon(s)\mathbf{u}^\varepsilon(s)\otimes \mathbf{u}^\varepsilon(s), \nabla\Psi^\varepsilon)ds
-\int_{0}^{t}(f^\varepsilon( \mathbf{u}^{\varepsilon}(s)), \Psi^\varepsilon)ds.
$$
Then, by the momentum equation we know  $\mathrm{P}$ a.s.
$$
\mathcal{G}_t^\varepsilon(\rho^\varepsilon, \mathbf{u}^{\varepsilon})=\int_{0}^{t}(g(\mathbf{u}^{\varepsilon}(s))dW^\varepsilon,  \Psi^\varepsilon),
$$
which is a square integrable martingale with quadratic variation
$$\langle\!\langle\mathcal{G}_t^\varepsilon(\rho^\varepsilon, \mathbf{u}^{\varepsilon}) \rangle\!\rangle=\sum_{k\geq 1}\int_{0}^{t}(g(\mathbf{u}^\varepsilon(s))Q^\frac{1}{2}\mathbf{e}_k, \Psi^\varepsilon)^2ds.$$
Also, define the functional
\begin{align*}
&(\rho, \mathbf{u})\mapsto \mathcal{G}_t(\rho, \mathbf{u}):=(\rho(t)\mathbf{u}(t), \varphi)- (\rho_0\mathbf{u}_0, \varphi)  \nonumber\\
&\quad+\sum_{i,j=1}^d\int_{\mathcal{O}_s}\int_{D}a_{i,j}(y, s)\left(\frac{\partial \mathbf{u}(x,s)}{\partial x_i}+\frac{\partial \overline{\mathbf{u}}(x,y,s)}{\partial y_i}\right)\left(\frac{\partial \varphi(x)}{\partial x_j}+\frac{\partial \psi(x,y)}{\partial y_j}\right)dxdyds\nonumber\\
&\quad-\int_{0}^{t}(\rho(s)\mathbf{u}(s)\otimes \mathbf{u}(s), \nabla_x\varphi)ds-\int_{\mathcal{O}_s}\int_{D}f(y, s, \mathbf{u}(s)) \varphi dxdyds.
\end{align*}
Our goal is to show that
$$\mathcal{G}_t(\rho, \mathbf{u})=\int_{0}^{t}(g(\mathbf{u}(s))dW,  \varphi),$$
which can be obtained once we show that $\mathcal{G}_t(\rho, \mathbf{u})$ is a martingale with quadratic variation
$$
\langle\!\langle \mathcal{G}_t(\rho, \mathbf{u}) \rangle\!\rangle=\sum_{k\geq 1}\int_{0}^{t}(g(\mathbf{u}(s))Q^\frac{1}{2}\mathbf{e}_k, \varphi)^2ds,
$$
and the cross variation
$$
\langle\!\langle \mathcal{G}_t(\rho, \mathbf{u}),  W_k\rangle\!\rangle=\int_{0}^{t}(g(\mathbf{u}(s))Q^\frac{1}{2}\mathbf{e}_k, \varphi)ds.
$$

For the first term in the momentum equation, by $\rho^\varepsilon\mathbf{u}^\varepsilon\rightarrow \rho \mathbf{u}$ in $C([0,T]; W^{-\alpha, 2}(\mathcal{O}))$, \eqref{1.23***}, \eqref{4.1*}, we could have in the same way as \eqref{p1} and \eqref{p2} for $\mathrm{P}$ a.s. $\omega$
\begin{align}\label{tc8}
	(\rho^\varepsilon(t)\mathbf{u}^{\varepsilon}(t), \Psi^\varepsilon)-(\rho_0\mathbf{u}_0,  \Psi^\varepsilon)\rightarrow (\rho(t)\mathbf{u}(t), \varphi(x))-(\rho_0\mathbf{u}_0, \varphi(x)).
\end{align}
For the diffusion term, we see
$$\int_{0}^{t}(A^\varepsilon \mathbf{u}^\varepsilon(s), \Psi^\varepsilon)_{V'\times V}ds=-\int_{0}^{t}\sum_{i,j=1}^d\left(a_{i,j}\left(\frac{x}{\varepsilon}, s\right) \nabla\mathbf{u}^\varepsilon(s), \nabla\Psi^\varepsilon\right)ds$$
$$=-\int_{0}^{t}\sum_{i,j=1}^d\left(a^\varepsilon_{i,j}(x,s)\frac{\partial\mathbf{u}^\varepsilon(x,s)}{\partial x_i}, \frac{\partial\varphi(x)}{\partial x_j}+\frac{\partial\psi\left(x,\frac{x}{\varepsilon}\right)}{\partial y_j}+\varepsilon\frac{\partial\psi\left(x,\frac{x}{\varepsilon}\right)}{\partial x_j}\right)ds.$$
Using \eqref{4.4} we get $\mathrm{P}$ a.s.
$$
\int_{0}^{t}\sum_{i,j=1}^d\left(a^\varepsilon_{i,j}(x,s)\frac{\partial\mathbf{u}^\varepsilon(x,s)}{\partial x_i}, \frac{\partial\varphi(x)}{\partial x_j}+\frac{\partial\psi\left(x,\frac{x}{\varepsilon}\right)}{\partial y_j}\right)ds\rightarrow
$$
$$
 \int_{\mathcal{O}_s}\int_{D}\sum_{i,j=1}^da_{i,j}(y,s)\left(\frac{\partial \mathbf{u}(x,s)}{\partial x_i}+\frac{\partial \overline{\mathbf{u}}(x,y,s)}{\partial y_i}\right)\left(\frac{\partial \varphi(x)}{\partial x_j}+\frac{\partial \psi(x,y)}{\partial y_j}\right)dxdyds.
$$
Moreover, by Lemma \ref{lemapx.1} and $a_{i,j}\in L^\infty (\mathbb{R}^d_y\times \mathbb{R}^+)$ we obtain $\mathrm{P}$ a.s.
$$
\int_{0}^{t}\sum_{i,j=1}^d\left(a^\varepsilon_{i,j}(x,s)\frac{\partial\mathbf{u}^\varepsilon(x,s)}{\partial x_i}, \varepsilon\frac{\partial\psi\left(x,\frac{x}{\varepsilon}\right)}{\partial x_j}\right)ds\rightarrow 0.
$$
We obtain for $\mathrm{P}$ a.s. $\omega$ as $\varepsilon\rightarrow 0$
$$
\int_{0}^{t}(A^\varepsilon \mathbf{u}^\varepsilon(s), \Psi^\varepsilon)_{V'\times V}ds\rightarrow
$$
\begin{align}\label{tc8*} -\int_{\mathcal{O}_s}\int_{D}\sum_{i,j=1}^da_{i,j}(y,s)\left(\frac{\partial \mathbf{u}(x,s)}{\partial x_i}+\frac{\partial \overline{\mathbf{u}}(x,y,s)}{\partial y_i}\right)\left(\frac{\partial \varphi(x)}{\partial x_j}+\frac{\partial \psi(x,y)}{\partial y_j}\right)dxdyds.
\end{align}

By
using  \eqref{tc2}, \eqref{tc3} and Lemma \ref{lem4.3}, we infer $\mathrm{P}$ a.s.
\begin{align*}
\rho^\varepsilon\mathbf{u}^\varepsilon\otimes \mathbf{u}^\varepsilon\rightarrow \rho\mathbf{u}\otimes \mathbf{u}, ~{\rm weak}-\Sigma ~{\rm in}~ L^\frac{p+1}{p}( \mathcal{O}_s),
\end{align*}
which follows
$$
\int_{0}^{t}\left(\rho^\varepsilon(s)\mathbf{u}^\varepsilon(s)\otimes \mathbf{u}^\varepsilon(s), \nabla_y\psi\left(x,\frac{x}{\varepsilon}\right)\right)ds$$
$$
\rightarrow\int_{\mathcal{O}_s}\int_{D}(\rho(s)\mathbf{u}(s)\otimes \mathbf{u}(s)) \nabla_y\psi\left(x,y\right)dxdyds$$
\begin{align}\label{tc10}
=\int_{\mathcal{O}_s}\rho(s)\mathbf{u}(s)\otimes \mathbf{u}(s)\left(\int_{D}\nabla_y\psi\left(x,y\right)dy\right) dx ds=0.
\end{align}
Moreover, by \eqref{1.23***} and \eqref{4.1*} we also have for $\mathrm{P}$ a.s. $\omega$  as $\varepsilon\rightarrow 0$
\begin{align}\int_{0}^{t}\left(\rho^\varepsilon(s)\mathbf{u}^\varepsilon(s)\otimes \mathbf{u}^\varepsilon(s), \varepsilon\nabla_x\psi\left(x,\frac{x}{\varepsilon}\right)\right)ds\rightarrow 0,
\end{align}
and
\begin{align}\label{tc11}
\int_{0}^{t}\left(\rho^\varepsilon(s)\mathbf{u}^\varepsilon(s)\otimes \mathbf{u}^\varepsilon(s), \nabla_x \varphi(x)\right)ds\rightarrow\int_{0}^{t}\left(\rho(s)\mathbf{u}(s)\otimes \mathbf{u}(s), \nabla_x \varphi(x)\right)ds.
\end{align}
From \eqref{tc10}-\eqref{tc11}, we arrive at for $\mathrm{P}$ a.s. $\omega$ as $\varepsilon\rightarrow 0$
\begin{align}\label{tc11*}
\int_{0}^{t}\left(\rho^\varepsilon(s)\mathbf{u}^\varepsilon(s)\otimes \mathbf{u}^\varepsilon(s), \nabla \Psi^\varepsilon\right)ds\rightarrow\int_{0}^{t}\left(\rho(s)\mathbf{u}(s)\otimes \mathbf{u}(s), \nabla_x \varphi(x)\right)ds.
\end{align}
By $\mathbf{u}^\varepsilon\rightarrow \mathbf{u}$ in $L^2(\mathcal{O}_s)$, $\mathrm{P}$ a.s., \cite[Lemma 7]{raz} and $(\mathbf{A.1})$ we obtain
\begin{align}\label{4.28} f^\varepsilon(\mathbf{u}^\varepsilon)\rightarrow f(\cdot, \cdot, \mathbf{u}), ~{\rm weak}-\Sigma,~ {\rm in}~L^2( \mathcal{O}_s), \end{align}
which implies
\begin{align}\label{tc12}
\int_{0}^{t}(f^\varepsilon( \mathbf{u}^{\varepsilon}(s)), \Psi^\varepsilon)ds\rightarrow\int_{\mathcal{O}_s}\int_{D}f(y, s, \mathbf{u}(s)) \varphi(x) dxdyds.
\end{align}
Combining \eqref{tc8}, \eqref{tc8*}, \eqref{tc11*} and \eqref{tc12}, we get for $\mathrm{P}$ a.s. $\omega$
\begin{align}\label{2.15}
\mathcal{G}_t^\varepsilon(\rho^\varepsilon, \mathbf{u}^\varepsilon)\rightarrow\mathcal{G}_t(\rho, \mathbf{u}), ~{\rm as} ~\varepsilon\rightarrow 0.
\end{align}
By the Burkholder-Davis-Gundy inequality,  $(\mathbf{A.4})$ and \eqref{1.23**} we have for any $p>2$
\begin{align*}
\mathrm{E}\left(|\mathcal{G}_t^\varepsilon(\rho^\varepsilon, \mathbf{u}^\varepsilon)|^p\right)\leq C\mathrm{E}\left|\int_{0}^{t}\sum_{k\geq 1}(g( \mathbf{u}^\varepsilon(r))Q^\frac{1}{2}\mathbf{e}_k, \mathbf{u}^\varepsilon(r))^2dr\right|^\frac{p}{2}\lesssim_{m, M, c_2, c_4, \kappa, p, T} C,
\end{align*}
where $C>0$ is independent of $\varepsilon$. Then, by \eqref{2.15} we infer from the Vitali convergence theorem 
\begin{align}\label{4.24**}\mathrm{E}(\left|\mathcal{G}_t^\varepsilon(\rho^\varepsilon, \mathbf{u}^\varepsilon)-\mathcal{G}_t(\rho, \mathbf{u})\right|)\rightarrow 0, ~{\rm as} ~\varepsilon\rightarrow 0.\end{align}

Let
$\mathbf{h}$ be any bounded continuous functional on $X|_{[0,s]} \times C([0, s]; H_0)$, by \eqref{4.24**} and the martingale property we have
$$\mathrm{E}\left((\mathcal{G}_t(\rho, \mathbf{u})-\mathcal{G}_s(\rho, \mathbf{u}))\mathbf{h}\left((\rho, \mathbf{u}, \rho \mathbf{u}, W)|_{[0,s]}\right)
\right)$$\begin{align*}=\lim_{\varepsilon\rightarrow 0}\mathrm{E}\left((\mathcal{G}_t^\varepsilon(\rho^\varepsilon, \mathbf{u}^\varepsilon)-\mathcal{G}_s^\varepsilon(\rho^\varepsilon, \mathbf{u}^\varepsilon))\mathbf{h}\left((\rho^{\varepsilon}, \mathbf{u}^{\varepsilon}, \rho^{\varepsilon}\mathbf{u}^{\varepsilon}, W^{\varepsilon})|_{[0,s]}\right)
	\right)=0.
\end{align*}
The arbitrariness of $\mathbf{h}$ implies
\begin{align}\label{2.17}
\mathrm{E}(\mathcal{G}_t(\rho, \mathbf{u})|\mathscr{F}_s)=\mathcal{G}_s(\rho, \mathbf{u}),
\end{align}
where the  filtration $\{\mathscr{F}_t\}_{t\geq 0}$ is generated by $\sigma\{\rho(s), \mathbf{u}(s), \rho(s)\mathbf{u}(s), W(s), s\leq t\}$ satisfying the usual conditions.

We proceed to show that
$$\mathrm{E}\left((\mathcal{G}_t(\rho, \mathbf{u}))^2-\sum_{k\geq 1}\int_{0}^{t}(g(\mathbf{u}(r))Q^\frac{1}{2}\mathbf{e}_k, \varphi)^2dr\bigg|\mathscr{F}_s\right)$$
\begin{align}\label{2.18}
=(\mathcal{G}_s(\rho, \mathbf{u}))^2-\sum_{k\geq 1}\int_{0}^{s}(g(\mathbf{u}(r))Q^\frac{1}{2}\mathbf{e}_k, \varphi)^2dr.
\end{align}
Again, the Vitali convergence theorem implies
\begin{align}\label{4.24*}\mathrm{E}(\mathcal{G}_t^\varepsilon(\rho^\varepsilon, \mathbf{u}^\varepsilon)-\mathcal{G}_t(\rho, \mathbf{u}))^2\rightarrow 0, ~{\rm as} ~\varepsilon\rightarrow 0.\end{align}
We also need to show that
\begin{align}\label{4.26*}\mathrm{E}\left|\sum_{k\geq 1}\int_{0}^{t}(g(\mathbf{u}^\varepsilon(r))Q^\frac{1}{2}\mathbf{e}_k, \Psi^\varepsilon)^2
-(g(\mathbf{u}(r))Q^\frac{1}{2}\mathbf{e}_k, \varphi)^2dr\right|\rightarrow 0,\end{align}
as $\varepsilon\rightarrow 0$.
From  $({\bf A.4})$ and \eqref{1.23**}, we have
$$\mathrm{E}\left|\sum_{k\geq 1}\int_{0}^{t}(g(\mathbf{u}^\varepsilon(r))Q^\frac{1}{2}\mathbf{e}_k,\varepsilon\psi)^2dr\right|\leq \varepsilon\mathrm{E}\left|\int_{0}^{t}\|\psi\|^2_{L^2(\mathcal{O}; L^2_{per}(D))}\|g(\mathbf{u}^\varepsilon(r))\|^2_{L_2(H;H)}dr\right|$$
\begin{align}\label{4.27*}\leq \varepsilon c_4\|\psi\|^2_{L^2(\mathcal{O}; L^2_{per}(D))}\mathrm{E}\left(\int_{0}^{t}(1+\|\mathbf{u}^\varepsilon(r)\|_{H}^2)dr\right)\rightarrow 0,\end{align}
as $\varepsilon\rightarrow 0$. It remains to show that
\begin{align}\label{4.22}\mathrm{E}\left|\sum_{k\geq 1}\int_{0}^{t}(g(\mathbf{u}^\varepsilon(r))Q^\frac{1}{2}\mathbf{e}_k, \varphi)^2
-(g(\mathbf{u}(r))Q^\frac{1}{2}\mathbf{e}_k, \varphi)^2dr\right|\rightarrow 0.\end{align}
By \eqref{2.1} and $({\bf A.3})$, we have $\mathrm{P}$ a.s.
$$\left|\sum_{k\geq 1}\int_{0}^{t}(g(\mathbf{u}^\varepsilon(r))Q^\frac{1}{2}\mathbf{e}_k, \varphi)^2
-(g(\mathbf{u}(r))Q^\frac{1}{2}\mathbf{e}_k, \varphi)^2dr\right|$$
$$\leq \int_{0}^{t}\|\varphi\|^2_{C^\infty_{0, div}(\mathcal{O})}\|g(\mathbf{u}^\varepsilon(r))-g(\mathbf{u}(r))\|_{L_2(H;H)}(\|g(\mathbf{u}^\varepsilon(r))\|_{L_2(H;H)}+\|g(\mathbf{u}(r))\|_{L_2(H;H)})dr$$
$$\leq C(c_3,c_4)\|\varphi\|^2_{C^\infty_{0, div}(\mathcal{O})}\left(\int_{0}^{t}\|\mathbf{u}^\varepsilon(r)-\mathbf{u}(r)\|^2_{H}dr\right)^\frac{1}{2}
\left(\int_{0}^{t}(1+\|\mathbf{u}^\varepsilon(r)\|_{H}^2+\|\mathbf{u}(r)\|^2_{H})dr\right)^\frac{1}{2}$$
$$\lesssim_{\omega, m, M, \kappa,  T, c_3,c_4} C\|\varphi\|^2_{C^\infty_{0, div}(\mathcal{O})}\left(\int_{0}^{t}\|\mathbf{u}^\varepsilon(r)-\mathbf{u}(r)\|^2_{H}dr\right)^\frac{1}{2}\rightarrow 0.$$
The dominated convergence theorem gives
$$\mathrm{E}\left|\sum_{k\geq 1}\int_{0}^{t}(g(\mathbf{u}^\varepsilon(r))Q^\frac{1}{2}\mathbf{e}_k, \varphi)^2
-(g(\mathbf{u}(r))Q^\frac{1}{2}\mathbf{e}_k, \varphi)^2dr\right|\rightarrow 0,$$
as desired.  \eqref{4.26*} is a consequence of \eqref{4.27*} and \eqref{4.22}.

Using \eqref{4.24*} and \eqref{4.26*}, we further obtain
\begin{align}\label{2.18*}
&\mathrm{E}\left(\!\left(\left(\mathcal{G}_t^\varepsilon(\rho^\varepsilon, \mathbf{u}^\varepsilon)\right)^2-\left(\mathcal{G}_s^\varepsilon(\rho^\varepsilon, \mathbf{u}^\varepsilon)\right)^2
-\sum_{k\geq 1}\int_{s}^{t}(g(\mathbf{u}^\varepsilon(r))Q^\frac{1}{2}\mathbf{e}_k, \Psi^\varepsilon)^2dr
\right)\mathbf{h}\left((\rho^\varepsilon, \mathbf{u}^\varepsilon, \rho^{\varepsilon}\mathbf{u}^{\varepsilon}, W^\varepsilon)|_{[0,s]}\right)\right)\nonumber\\
&\rightarrow\mathrm{E}\left(\!\left(\left(\mathcal{G}_t(\rho, \mathbf{u})\right)^2-\left(\mathcal{G}_s(\rho, \mathbf{u})\right)^2-\sum_{k\geq 1}\int_{s}^{t}(g(\mathbf{u}(r))Q^\frac{1}{2}\mathbf{e}_k, \varphi)^2dr\right)\!\mathbf{h}\left((\rho, \mathbf{u}, \rho\mathbf{u}, W)|_{[0,s]}\right)\!\right).
\end{align}
By the martingale property, we deduce
$$\mathrm{E}\left(\!\left(\left(\mathcal{G}_t^\varepsilon(\rho^\varepsilon, \mathbf{u}^\varepsilon)\right)^2-\left(\mathcal{G}_s^\varepsilon(\rho^\varepsilon, \mathbf{u}^\varepsilon)\right)^2
-\sum_{k\geq 1}\int_{s}^{t}(g(\mathbf{u}^\varepsilon(r))Q^\frac{1}{2}\mathbf{e}_k, \Psi^\varepsilon)^2dr
\right)\mathbf{h}\left((\rho^\varepsilon, \mathbf{u}^\varepsilon, \rho^{\varepsilon}\mathbf{u}^{\varepsilon}, W^\varepsilon)|_{[0,s]}\right)\right)=0,$$
then, by \eqref{2.18*} we further obtain
$$\mathrm{E}\left(\!\left(\left(\mathcal{G}_t(\rho, \mathbf{u})\right)^2-\left(\mathcal{G}_s(\rho, \mathbf{u})\right)^2-\sum_{k\geq 1}\int_{s}^{t}(g(\mathbf{u}(r))Q^\frac{1}{2}\mathbf{e}_k, \varphi)^2dr
\right)\!\mathbf{h}\left((\rho, \mathbf{u}, \rho\mathbf{u}, W)|_{[0,s]}\right)\!\right)=0.$$
The arbitrariness of $\mathbf{h}$ yields \eqref{2.18}.

Moreover, by an easier argument than \eqref{2.18} we have
$$
\mathrm{E}\left(\mathcal{G}_t(\rho, \mathbf{u})W_k(t)-\int_{0}^{t}(g(\mathbf{u}(r))Q^\frac{1}{2}\mathbf{e}_k, \varphi)dr\bigg|\mathscr{F}_s\right)$$
\begin{align}\label{4.37}
=\mathcal{G}_s(\rho, \mathbf{u})W_k(s)-\int_{0}^{s}(g(\mathbf{u}(r))Q^\frac{1}{2}\mathbf{e}_k, \varphi)dr.
\end{align}

By \eqref{2.17}, \eqref{2.18}, \eqref{4.37} and \eqref{1.23**}, we finally infer from the martingale representation theory
\begin{align*}
\mathcal{G}_t(\rho, \mathbf{u})=\int_{0}^{t}(g(\mathbf{u}(r))dW, \varphi).
\end{align*}
By the density of $C_{0, div}^\infty(\mathcal{O}),  C_{0,div}^\infty(\mathcal{O})\times C^\infty_{per}( D) $ in $V, V(\mathcal{O}; V_{per})$ and $C_{0}^\infty(\mathcal{O})$ in $ H^1(\mathcal{O})$, we complete the proof of Proposition \ref{pro2.1}.
\end{proof}

{\bf Recover the representation of $\overline{\mathbf{u}}$}. We are going to give the specific expression of the corrector $\overline{\mathbf{u}}$.

\begin{lemma}\label{lem4.6} The corrector $\overline{\mathbf{u}}$ is given by
\begin{align*}
\overline{\mathbf{u}}(x,y,t)=-\sum_{i,k=1}^d \frac{\partial \mathbf{u}}{\partial x_i}(x,t)\eta_{i,k}(y,t),~ \mathrm{P} ~a.s.
\end{align*}
where $\eta_{i,k}$ is the solution of variational problem
\begin{align}\label{var1}
\begin{cases}
\mathcal{K}(\eta_{i,k}, \mathbf{w})=\sum_{j=1}^d \int_{D}a_{i,j}\frac{\partial\mathbf{w}^k}{\partial y_j}dy,\\\\
\int_{D}\eta_{i,k}dy=0,
\end{cases}
\end{align}
for any $\mathbf{w}\in V_{per}$, $\mathrm{P}\times \mathcal{G}, a.e. ~(\omega, y)$, $\mathcal{G}$ is the Lebesgue measure and the bilinear operator $\mathcal{K}$ is defined by
$$\mathcal{K}(\mathbf{v}, \mathbf{w})=\sum_{i,j=1}^d\int_{D}a_{i,j}\frac{\partial \mathbf{v}}{\partial y_i}\frac{\partial \mathbf{w}}{\partial y_j}dy.$$
\end{lemma}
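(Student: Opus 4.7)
The plan is to extract a purely microscopic (cell) problem for $\overline{\mathbf{u}}$ from the homogenized variational identity \eqref{4.3}, and then identify the dependence of $\overline{\mathbf{u}}$ on $\nabla_x\mathbf{u}$ by linearity. Because the limit equation \eqref{4.3} is linear in the pair $(\varphi,\psi)$, I would first test it with macroscopic part $\varphi=0$ and arbitrary microscopic corrector $\psi \in L^2(\Omega;L^2(\mathcal{O}_t;L^2(\widetilde{T};V_{per})))$. Every term on the right-hand side of \eqref{4.3} except the diffusion term sees only $\varphi$ and drops out, so that
$$\mathrm{E}\sum_{i,j=1}^d\int_{\mathcal{O}_t}\!\int_{D_\tau} a_{i,j}(y,\tau)\left(\frac{\partial \mathbf{u}}{\partial x_i}+\frac{\partial \overline{\mathbf{u}}}{\partial y_i}\right)\frac{\partial \psi}{\partial y_j}\,dx\,dy\,dt\,d\tau = 0.$$

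Next, I would localize in $(\omega,x,t)$: choosing $\psi(x,t,y,\tau,\omega)=\beta(x,t)\mathbf{1}_{\mathcal{A}}(\omega)\mathbf{w}(y,\tau)$ with $\beta\in C_c^\infty(\mathcal{O}_t)$ arbitrary, $\mathcal{A}\in\mathcal{F}$ arbitrary and $\mathbf{w}\in V_{per}$, and invoking a density argument, the identity above reduces to the pointwise cell equation
$$\sum_{i,j=1}^d\int_{D_\tau} a_{i,j}\left(\frac{\partial \mathbf{u}}{\partial x_i}+\frac{\partial \overline{\mathbf{u}}}{\partial y_i}\right)\frac{\partial \mathbf{w}}{\partial y_j}\,dy\,d\tau = 0, \quad \forall \mathbf{w}\in V_{per},$$
valid for $\mathrm{P}\otimes dx\,dt$-a.e.\ $(\omega,x,t)$. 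Equivalently, $\mathcal{K}(\overline{\mathbf{u}},\mathbf{w}) = -\sum_{i,j}\int_{D_\tau} a_{i,j}\,\frac{\partial \mathbf{u}}{\partial x_i}\frac{\partial \mathbf{w}}{\partial y_j}\,dy\,d\tau$.

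At this point I would use linearity. Since $\partial\mathbf{u}/\partial x_i$ is independent of $(y,\tau)$, the right-hand side is a linear combination (with coefficients $\partial \mathbf{u}^k/\partial x_i$) of functionals of the form $\mathbf{w}\mapsto \sum_j\int_{D_\tau} a_{i,j}\,\partial_{y_j}\mathbf{w}^k\,dy\,d\tau$. The ellipticity \eqref{1.2} and the $L^\infty$-bound on $a_{i,j}$ make $\mathcal{K}$ coercive and continuous on $V_{per}$ (with the zero-mean normalization $\int_D \cdot\,dy=0$ selecting a unique representative), so Lax--Milgram delivers unique $\eta_{i,k}$ solving \eqref{var1}. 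Superposing these cell correctors then yields
$$\overline{\mathbf{u}}(x,t,y,\tau) = -\sum_{i,k=1}^d \frac{\partial \mathbf{u}^k}{\partial x_i}(x,t)\,\eta_{i,k}(y,\tau)\,\mathbf{e}_k,$$
which by construction solves the same cell equation as $\overline{\mathbf{u}}$; uniqueness (modulo the $y$-mean, which affects neither $\partial_{y_i}\overline{\mathbf{u}}$ nor the effective equation) forces the stated representation.

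The main obstacle is the careful index/componentwise bookkeeping: the cell problem \eqref{var1} involves only the $k$-th component $\mathbf{w}^k$ on its right-hand side, so one must verify that the scalar $\eta_{i,k}$ paired with $\mathbf{e}_k$ reproduces exactly the contribution of $\partial \mathbf{u}^k/\partial x_i$ in the cell identity, separately for each $k$. A minor secondary point is justifying the $(\omega,x,t)$-localization (standard because $\beta\mathbf{1}_{\mathcal{A}}$ generates a dense family in $L^2(\Omega\times\mathcal{O}_t)$) and checking that Lax--Milgram applies uniformly in the parameters, so that the map $(\omega,x,t)\mapsto\eta_{i,k}$ inherits the required measurability for $\overline{\mathbf{u}}$ to lie in $L^2(\Omega\times\mathcal{O}_t;L^2_{per}(D_\tau))$.
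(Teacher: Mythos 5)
Your proposal is correct and follows essentially the same route as the paper: test \eqref{4.3} with $\varphi=0$ and a separated microscopic test function, reduce to the cell identity, establish well-posedness of \eqref{var1} via coercivity of $\mathcal{K}$ (the paper cites Lions--Magenes for existence and proves uniqueness by taking $\mathbf{w}=\mathbf{v}_1-\mathbf{v}_2$), and conclude by linear superposition and uniqueness. Your sign on the right-hand side of the cell identity is the consistent one, and your extra remarks on pointwise localization and measurability only add detail the paper leaves implicit.
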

\begin{proof} Similar to \cite[Lemma 4.5]{chen}, choosing $\varphi=0$ in equations \eqref{4.3}, and $\psi=\mathbf{w}$ for $ \mathbf{w}\in V_{per}$, we have
\begin{align}\label{v3}
\mathcal{K}(\overline{\mathbf{u}}, \mathbf{w})=\sum_{i, j, k=1}^d \int_{\mathcal{O}_t} \frac{\partial\mathbf{u}}{\partial x_i}\left(\int_{D}a_{i,j}\frac{\partial\mathbf{w}^k}{\partial y_j}dy \right) dxds, ~ \mathrm{P} ~a.s.
\end{align}
For the existence of solutions to the variational  problem \eqref{var1}, the readers are  referred to \cite{lions}. We sketch the proof of uniqueness. Assume that $\mathbf{v}_1, \mathbf{v}_2$ are two solutions, then
$$\mathcal{K}(\mathbf{v}_1, \mathbf{w})-\mathcal{K}(\mathbf{v}_2, \mathbf{w})=\sum_{i,j=1}^d\int_{D}a_{i,j}\frac{\partial (\mathbf{v}_1-\mathbf{v}_2)}{\partial y_i}\frac{\partial \mathbf{w}}{\partial y_j}dy=0.$$
Let $\mathbf{w}=\mathbf{v}_1-\mathbf{v}_2$, we see
$$\mathcal{K}(\mathbf{v}_1, \mathbf{w})-\mathcal{K}(\mathbf{v}_2, \mathbf{w})=\sum_{i,j=1}^d\int_{D}a_{i,j}\frac{\partial (\mathbf{v}_1-\mathbf{v}_2)}{\partial y_i}\cdot
\frac{\partial (\mathbf{v}_1-\mathbf{v}_2)}{\partial y_j}  dy=0,$$
which along with $$\sum_{i,j=1}^d\int_{D}a_{i,j}\frac{\partial (\mathbf{v}_1-\mathbf{v}_2)}{\partial y_i}\cdot \frac{\partial (\mathbf{v}_1-\mathbf{v}_2)}{\partial y_j} dy\geq \kappa\|\mathbf{v}_1-\mathbf{v}_2\|^2_{V_{per}}\geq c\|\mathbf{v}_1-\mathbf{v}_2\|^2_{ L^2_{per}(D)},$$
leads to $\mathbf{v}_1=\mathbf{v}_2$. Note that the process $\overline{\mathbf{u}}$ is a solution to the variational problem. Compared with \eqref{var1}, we find that
$$\widetilde{\mathbf{v}}=-\sum_{i,k=1}^d \frac{\partial \mathbf{u}}{\partial x_i}(x,t)\eta_{i,k}(y,t),  ~ \mathrm{P} ~a.s.$$
is also a solution of \eqref{v3}. We obtain $\overline{\mathbf{u}}=\widetilde{\mathbf{v}}$ from the uniqueness.
\end{proof}

\textbf{Proof of Theorem \ref{thm2.1}}. Denote the function
\begin{align}\label{t2.1}\mathbf{a}_{i,j,k,l}=\int_{D}a_{i,j}(y,s)dy-\int_{D}a_{i,j}(y, s)\frac{\partial\eta^l_{i,k}(y,s)}{\partial y_j}dy,\end{align}
for $1\leq i,j,k,l \leq d$. Corresponding to the function, we denote by $\overline{A}=(\overline{A}_{kl})_{k,l=1,\cdots, d}$ the differential homogenized operator
\begin{align}\label{4.39}\overline{A}_{kl}=-\sum_{i,j=1}^d\mathbf{a}_{i,j,k,l}\frac{\partial^2}{\partial x_i \partial x_j},~~ k,l=1,2,\cdots,d.\end{align}
From Lemma \ref{lem4.6} and Proposition \ref{pro2.1}, we finally obtain $(\rho, \rho\mathbf{u})$ satisfies the homogenized Navier-Stokes equations in Theorem \ref{thm2.1}.
We emphasize that the homogenized operator $\overline{A}$ also satisfies the condition of uniform ellipticity see \cite{ben}, thus, there exists constant $\kappa>0$ such that
$$\sum_{i,j,k,l=1}^d\mathbf{a}_{i,j,k,l}\xi_{i,k}\xi_{j,l}\geq \kappa \sum_{k,l=1}^d|\xi_{k,l}|^2.$$
Moreover, we could easily verify that
$$|\overline{f}(\mathbf{u}_1)-\overline{f}(\mathbf{u}_2)|\leq c_1|\mathbf{u}_1-\mathbf{u}_2|.$$
Using the uniform ellipticity condition, ({\bf A.3})-({\bf A.4}) and the Lipschitz continuity of $\overline{f}$, we could infer that the homogenized Navier-Stokes equations admit a solution $(\rho, \rho\mathbf{u})$ with the regularity as in Proposition \ref{lem2.1*}. 

Using It\^{o}'s formula to $\frac{1}{2}\|\sqrt{\rho(t)} \mathbf{u}(t)\|_{L^2(\mathcal{O})}^2$, we obtain for every $t\in [0,T]$, and P a.s. $\omega$
\begin{align*}
&\| \sqrt{\rho(t)}\mathbf{u}(t)\|^2_{L^2(\mathcal{O})}- \| \sqrt{\rho_0}\mathbf{u}_0\|^2_{L^2(\mathcal{O})}\nonumber\\
&=2\int_{0}^{t}(\overline{A}\mathbf{u}(s), \mathbf{u}(s))_{V'\times V}ds+2\int_{0}^{t}\int_{\mathcal{O}}\int_{D}f(y,s, \mathbf{u}(s)) \mathbf{u}(s)dxdyds\nonumber\\
&\quad+2\int_{0}^{t}(g(\mathbf{u}(s))dW,  \mathbf{u}(s))+
\int_{0}^{t}\|g(\mathbf{u}(s))\|_{L_2(H;H)}^2ds.
\end{align*}
Then by \eqref{t2.1}, \eqref{4.39} and Lemma \ref{lem4.6} we see
\begin{align*}&\int_{0}^{t}(\overline{A}\mathbf{u}(s), \mathbf{u}(s))_{V'\times V}ds=\nonumber\\
&-\sum_{i,j=1}^d\int_{0}^{t}\int_{\mathcal{O}}\int_{D}a_{i,j}(y, s)\left(\frac{ \partial\mathbf{u}(x,s)}{\partial{x_i}}+\frac{ \partial\overline{\mathbf{u}}(x,y,s)}{\partial {y_i}}\right)\cdot \left(\frac{ \partial\mathbf{u}(x,s)}{\partial x_j}+\frac{ \partial\overline{\mathbf{u}}(x,y,s)}{\partial y_j}\right) dxdyds.
\end{align*}
Thus, we obtain the energy equation \eqref{1.4}. This completes the proof.    $\Box$

\section{A corrector result}
A corrector result is established in this section which strengthens the convergence of $\nabla \mathbf{u}^\varepsilon$ in $L^2(\Omega\times \mathcal{O}_t)$, weak-$\Sigma$ to the $L^2(\Omega\times \mathcal{O}_t)$, strong-$\Sigma$. We first establish a stochastic version of the lower semicontinuity.
\begin{lemma}\label{lem5.1} 
If the weak-$\Sigma$ in $L^2(\Omega; L^2(0,T;H))$ convergence of $\mathbf{v}^\varepsilon$ to $\mathbf{v}$ holds, and $b\in (L^\infty (\mathbb{R}^d_y\times \mathbb{R}^+))^{d\times d}$  is a symmetric matrix satisfying the periodicity and uniform ellipticity conditions, then we have
$$
\liminf_{\varepsilon\rightarrow 0}\mathrm{E}\left(\int_{\mathcal{O}_t}b\left(\frac{x}{\varepsilon},t\right)\mathbf{v}^\varepsilon(x,t)\cdot\mathbf{v}^\varepsilon(x,t)dxdt\right)
$$
$$\geq \mathrm{E}\left(\int_{\mathcal{O}_t}\int_{D}b(y, t)\mathbf{v}(x,y,t)\cdot\mathbf{v}(x,y,t)dxdydt\right).
$$
\end{lemma}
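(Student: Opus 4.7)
The plan is the classical convexity/lower-semicontinuity argument tailored to two-scale convergence, combined with a density step to pass from smooth test fields to the limit $\mathbf{v}$. Since $b$ is symmetric and uniformly elliptic, for every admissible test field $\psi(x,t,y,\tau,\omega)$ and every $\varepsilon\in(0,1)$ one has the pointwise inequality
\begin{align*}
0 \leq b^\varepsilon(\mathbf{v}^\varepsilon-\psi^\varepsilon)\cdot(\mathbf{v}^\varepsilon-\psi^\varepsilon) = b^\varepsilon\mathbf{v}^\varepsilon\cdot\mathbf{v}^\varepsilon - 2 b^\varepsilon\mathbf{v}^\varepsilon\cdot\psi^\varepsilon + b^\varepsilon\psi^\varepsilon\cdot\psi^\varepsilon,
\end{align*}
where $b^\varepsilon=b(x/\varepsilon,t/\varepsilon)$ and $\psi^\varepsilon=\psi(x,t,x/\varepsilon,t/\varepsilon,\omega)$. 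Integrating over $\mathcal{O}_t\times\Omega$ and rearranging gives
\begin{align*}
\mathrm{E}\int_{\mathcal{O}_t} b^\varepsilon\mathbf{v}^\varepsilon\cdot\mathbf{v}^\varepsilon\,dxdt \geq 2\,\mathrm{E}\int_{\mathcal{O}_t} b^\varepsilon\mathbf{v}^\varepsilon\cdot\psi^\varepsilon\,dxdt - \mathrm{E}\int_{\mathcal{O}_t} b^\varepsilon\psi^\varepsilon\cdot\psi^\varepsilon\,dxdt.
\end{align*}

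Next, I would pass to the limit in each right-hand side term separately. For the cross term, since $b(y,\tau)\psi(x,t,y,\tau,\omega)$ is an admissible test function (smooth, compactly supported, periodic in $(y,\tau)$, with $b\in L^\infty$), the weak-$\Sigma$ convergence hypothesis on $\mathbf{v}^\varepsilon$ yields
\begin{align*}
\mathrm{E}\int_{\mathcal{O}_t} b^\varepsilon\mathbf{v}^\varepsilon\cdot\psi^\varepsilon\,dxdt \longrightarrow \mathrm{E}\int_{\mathcal{O}_t}\int_{D_\tau} b(y,\tau)\mathbf{v}(x,t,y,\tau)\cdot\psi(x,t,y,\tau)\,dxdydtd\tau.
\end{align*}
For the quadratic term in $\psi$, since $(b\psi\otimes\psi)(x,t,y,\tau,\omega)$ is itself periodic in $(y,\tau)$, a standard mean-value (Riemann--Lebesgue for periodic functions) argument together with the dominated convergence theorem in $\omega$ gives
\begin{align*}
\mathrm{E}\int_{\mathcal{O}_t} b^\varepsilon\psi^\varepsilon\cdot\psi^\varepsilon\,dxdt \longrightarrow \mathrm{E}\int_{\mathcal{O}_t}\int_{D_\tau} b(y,\tau)\psi(x,t,y,\tau)\cdot\psi(x,t,y,\tau)\,dxdydtd\tau.
\end{align*}
Taking $\liminf_{\varepsilon\to 0}$ therefore produces, for every such $\psi$,
\begin{align*}
\liminf_{\varepsilon\to 0}\mathrm{E}\int_{\mathcal{O}_t} b^\varepsilon\mathbf{v}^\varepsilon\cdot\mathbf{v}^\varepsilon\,dxdt \geq \mathrm{E}\int_{\mathcal{O}_t}\int_{D_\tau}\bigl[2b\mathbf{v}\cdot\psi - b\psi\cdot\psi\bigr]dxdydtd\tau.
\end{align*}

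Finally, I would conclude by choosing $\psi$ to approximate $\mathbf{v}$ in the natural norm. Since smooth $(y,\tau)$-periodic fields of the form used above are dense in $L^2(\Omega\times\mathcal{O}_t;L^2_{per}(D_\tau))$, one can pick a sequence $\psi_n\to\mathbf{v}$ in this space. Because $b$ is bounded, both sides of the previous inequality are continuous with respect to $\psi$ in the $L^2$-norm weighted by $b$, so passing to the limit in $n$ yields
\begin{align*}
\liminf_{\varepsilon\to 0}\mathrm{E}\int_{\mathcal{O}_t} b^\varepsilon\mathbf{v}^\varepsilon\cdot\mathbf{v}^\varepsilon\,dxdt \geq \mathrm{E}\int_{\mathcal{O}_t}\int_{D_\tau} b(y,\tau)\mathbf{v}\cdot\mathbf{v}\,dxdydtd\tau,
\end{align*}
which is the claim. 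The only technical point I expect to require genuine care is the density step: one must ensure that the approximating sequence $\psi_n$ can actually be chosen among test fields for which the weak-$\Sigma$ convergence applies (e.g., built from $C^\infty_c(\mathcal{O}_t)\otimes C^\infty_{per}(D_\tau)$ tensored with $\mathcal{F}$-measurable coefficients in $L^2(\Omega)$), and that the uniform ellipticity of $b$ is used to control the $L^2(b\,dydt d\tau)$-norm by the $L^2_{per}(D_\tau)$-norm so that density in the latter transfers to the former.
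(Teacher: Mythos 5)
Your proposal is correct and follows essentially the same route as the paper: the pointwise nonnegativity of the quadratic form $b^\varepsilon(\mathbf{v}^\varepsilon-\psi^\varepsilon)\cdot(\mathbf{v}^\varepsilon-\psi^\varepsilon)$ from uniform ellipticity, passage to the limit in the cross and quadratic terms via weak-$\Sigma$ convergence and the periodic mean-value property, and then a density/continuity argument (the paper phrases this as continuity of the functional $\digamma(h)=\mathrm{E}\int\!\int(2b\mathbf{v}h-bh\cdot h)$ in $h$, using only the boundedness of $b$, before setting $h=\mathbf{v}$). The paper's test fields are of the tensor form $h_1(x,x/\varepsilon)h_2(t,t/\varepsilon)1_{\mathcal{A}}(\omega)$ with indicator functions carrying the $\omega$-dependence, which is exactly the admissible class you flag in your final technical remark.
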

\begin{proof} Inspired by \cite[Section 7]{zhi}, we  choose $h^\varepsilon(x,t)=h_1\left(x, \frac{x}{\varepsilon}\right)h_2\left(t\right)1_{\mathcal{A}}(\omega)$, where $h_1(x,y)\in C_0^\infty(\mathcal{O})\times C_{per}^\infty(D)$, $h_2(t)\in C_0^\infty([0,T])$,  $\mathcal{A}\in \mathcal{B}(\Omega)$.
Then, by the uniform ellipticity condition of $b$ we see P a.s.
\begin{align}\label{5.1*}
	&0\leq  b\left(\frac{x}{\varepsilon},t\right)(\mathbf{v}^\varepsilon(x,t)-h^\varepsilon(x,t))\cdot(\mathbf{v}^\varepsilon(x,t)-h^\varepsilon(x,t))
	\nonumber\\
	&= b\left(\frac{x}{\varepsilon},t\right)\mathbf{v}^\varepsilon(x,t)\cdot\mathbf{v}^\varepsilon(x,t) -2b\left(\frac{x}{\varepsilon},t\right)\mathbf{v}^\varepsilon(x,t)h^\varepsilon(x,t)+b\left(\frac{x}{\varepsilon},t\right)h^\varepsilon(x,t)\cdot h^\varepsilon(x,t).
\end{align}
Furthermore, by \eqref{5.1*} and 	the weak-$\Sigma$  in $L^2(\Omega, L^2(0,T;H))$ convergence of $\mathbf{v}^\varepsilon$ to $\mathbf{v}$  we obtain
\begin{eqnarray*}
	&&	\liminf_{\varepsilon\rightarrow 0}\mathrm{E}\left(\int_{\mathcal{O}_t}b\left(\frac{x}{\varepsilon},t\right)\mathbf{v}^\varepsilon(x,t)\cdot\mathbf{v}^\varepsilon(x,t)dxdt\right)\nonumber\\
	&&\geq \liminf_{\varepsilon\rightarrow 0}\mathrm{E}\left(\int_{\mathcal{O}_t}2b\left(\frac{x}{\varepsilon},t\right)\mathbf{v}^\varepsilon(x,t)h^\varepsilon(x,t)-
	b\left(\frac{x}{\varepsilon},t\right)h^\varepsilon(x,t)\cdot h^\varepsilon(x,t)dxdt\right)\nonumber\\
	&&=\mathrm{E}\left(\int_{\mathcal{O}_t}\int_{D}2b\left(y,t\right)\mathbf{v}(x, y,t)h(x,y,t)-
	b\left(y,t\right)h(x,y,t)\cdot h(x,y,t)dxdydt\right),
\end{eqnarray*}
where $h(x,y,t)=h_1(x,y)h_2(t)1_{\mathcal{A}}(\omega)$.
Define the operator $\digamma:L^2(\Omega; L^2(\mathcal{O}_t; L^2_{per} (D)))\longrightarrow \mathbb{R}$ by
$$\digamma(h)=\mathrm{E}\left(\int_{\mathcal{O}_t}\int_{D}2b\left(y,t\right)\mathbf{v}(x,y,t)h(x,y,t)-
	b\left(y,t\right)h(x,y,t)\cdot h(x,y,t)dxdydt\right).$$
We see that  $\digamma$ is continuous with respect to $h$ due to $b\in (L^\infty (\mathbb{R}^d_y\times \mathbb{R}^+))^{d\times d}$. Then by taking $h=\mathbf{v}$, we obtain the desired result.
\end{proof}


We introduce the following two-scale convergence of stochastic version given in \cite[Theorem 4]{raz}, \cite{zhi}, which will be used for establishing the strong-$\Sigma$ convergence.
\begin{lemma}\label{lem4.2} Suppose that $\{\mathbf{u}^\varepsilon\}_{\varepsilon\in(0,1)}$ is a sequence of $L^2(0,T;V)$-valued random variables with the regularity
$$\mathrm{E}\left(\int_{0}^{T}\|\mathbf{u}^\varepsilon(t)\|_{V}^2dt\right)\leq C,$$
and the convergence
$$\mathbf{u}^\varepsilon\rightarrow \mathbf{u}, ~in ~L^2(\mathcal{O}_t),~ \mathrm{P}~~ a.s.$$
then, there exist a subsequence (still denoted by $\{\mathbf{u}^\varepsilon\}_{\varepsilon\in(0,1)}$) and a $L^2(\mathcal{O}_t; L^2_{per}(D))$-valued random variable $\overline{\mathbf{u}}$ such that
\begin{align*}
\frac{\partial \mathbf{u}^\varepsilon}{\partial x_i}\rightarrow \frac{\partial \mathbf{u}}{\partial x_i}+\frac{\partial \overline{\mathbf{u}}}{\partial y_i}, ~in ~ L^2(\Omega\times\mathcal{O}_t), ~weak-\Sigma.
\end{align*}
\end{lemma}

\begin{lemma}\label{lem5.4} Assume that for any $r, p,q>1$ with $\frac{1}{r}=\frac{1}{p}+\frac{1}{q}$, if the following two conditions hold

i. a sequence of $L^p(\mathcal{O}_t)$-valued random variables $\{\mathbf{u}^\varepsilon\}_{\varepsilon\in(0,1)}$ is weak-$\Sigma$ convergent to some certain $\mathbf{u}\in L^p(\Omega\times \mathcal{O}_t; L^p_{per}(D))$;

ii. a sequence of $L^q(\mathcal{O}_t)$-valued random variables $\{\mathbf{v}^\varepsilon\}_{\varepsilon\in(0,1)}$ is strong-$\Sigma$ convergent to some certain $\mathbf{v}\in L^q(\Omega\times \mathcal{O}_t; L^q_{per}(D))$.

Then, we have that the sequence of $\{\mathbf{u}^\varepsilon\mathbf{v}^\varepsilon\}_{\varepsilon\in (0,1)}$ is weak-$\Sigma$ convergent to $\mathbf{u}\mathbf{v}$ in $L^r(\Omega\times \mathcal{O}_t)$.
\end{lemma}

\begin{lemma}\label{lem5.3}  A sequence of $L^p(\mathcal{O}_t)$-valued random variables $\{\mathbf{u}^\varepsilon\}_{\varepsilon\in(0,1)}$ is  strong-$\Sigma$ convergent in $L^p(\Omega\times \mathcal{O}_t)$ if  there exists a certain $L^p(\mathcal{O}_t; L^p_{per}(D))$-valued random variable $\mathbf{u}$ such that

i. the weak-$\Sigma$ convergence holds;

ii. it satisfies
$$\|\mathbf{u}^\varepsilon\|_{L^p(\Omega\times \mathcal{O}_t)}\rightarrow \|\mathbf{u}\|_{L^p(\Omega\times \mathcal{O}_t; L^p_{per}(D))}.$$
\end{lemma}

We end the paper by showing Theorem \ref{thm2} on the basis of Lemma \ref{lem5.1}-Lemma \ref{lem5.3}. By Lemma \ref{lem4.2} and \eqref{1.23**} we have 
\begin{align}\label{w1}
\frac{\partial \mathbf{u}^\varepsilon}{\partial x_i}\rightarrow \frac{\partial \mathbf{u}}{\partial x_i}+\frac{\partial \overline{\mathbf{u}}}{\partial y_i}, ~in ~ L^2(\Omega\times\mathcal{O}_t), ~weak-\Sigma.
\end{align}
We then improve the weak-$\Sigma$ convergence to the strong-$\Sigma$ convergence.

$\mathbf{Proof~ of~ Theorem ~\ref{thm2}}$. We first show that for every $t\in [0,T]$
\begin{align}\label{5.1}
\mathrm{E}\left(\|\sqrt{\rho^\varepsilon(t) }\mathbf{u}^\varepsilon(t)\|_{L^2(\mathcal{O})}^2\right)\rightarrow\mathrm{E}\left(\|\sqrt{\rho(t) }\mathbf{u}(t)\|_{L^2(\mathcal{O})}^2\right).
\end{align}
Since $\{\rho^{\varepsilon}\mathbf{u}^{\varepsilon}\}_{\varepsilon\in(0,1)}$ is uniformly bounded in 
$L^{p}\!\left(\Omega;L^{2}(\mathcal{O})\right)$ for any $p\ge 2$,  we have the weak convergence
$$\rho^\varepsilon\mathbf{u}^\varepsilon\rightharpoonup \rho\mathbf{u},~ {\rm in}~ L^p(\Omega; L^2(\mathcal{O})),$$
which implies that
$$\liminf_{\varepsilon\rightarrow 0}\mathrm{E}\left(\|\sqrt{\rho^\varepsilon(t) }\mathbf{u}^\varepsilon(t)\|_{L^2(\mathcal{O})}^2\right)\geq \mathrm{E}\left(\|\sqrt{\rho(t) }\mathbf{u}(t)\|_{L^2(\mathcal{O})}^2\right).$$
Then, the convergence \eqref{5.1} will follow from
\begin{align}\label{5.2}
\limsup_{\varepsilon\rightarrow 0}\mathrm{E}\left(\|\sqrt{\rho^\varepsilon(t) }\mathbf{u}^\varepsilon(t)\|_{L^2(\mathcal{O})}^2\right)\leq \mathrm{E}\left(\|\sqrt{\rho(t) }\mathbf{u}(t)\|_{L^2(\mathcal{O})}^2\right).
\end{align}

By \eqref{priori2.12} we have
\begin{eqnarray}\label{4.5}
	&&\|\sqrt{\rho^\varepsilon(t) }\mathbf{u}^\varepsilon(t)\|_{L^2(\mathcal{O})}^2+2\int_0^t(A^\varepsilon \mathbf{u}^\varepsilon(r), \mathbf{u}^\varepsilon(r))_{V'\times V}dr\nonumber\\
	&&=\|\sqrt{\rho_0 }\mathbf{u}_0\|_{L^2(\mathcal{O})}^2+2\int_{0}^{t}\left(f\left(\frac{x}{\varepsilon}, r, \mathbf{u}^\varepsilon(r)\right), \mathbf{u}^\varepsilon(r)\right)dr
	\nonumber\\
	&&\quad+\int_{0}^{t}\|g(\mathbf{u}^\varepsilon(r))\|_{L_2(H; H)}^2 dr+2\int_{0}^{t}(g(\mathbf{u}^\varepsilon(r))dW^\varepsilon, \mathbf{u}^\varepsilon(r)).
\end{eqnarray}
Taking expectation on both sides of \eqref{4.5} we obtain
$$
\mathrm{E}\left(\|\sqrt{\rho^\varepsilon(t) }\mathbf{u}^\varepsilon(t)\|_{L^2(\mathcal{O})}^2\right)+2\mathrm{E}\left(\int_0^t(A^\varepsilon \mathbf{u}^\varepsilon(r), \mathbf{u}^\varepsilon(r))_{V'\times V}dr\right)
$$
\begin{align}\label{5.3}
=\|\sqrt{\rho_0 }\mathbf{u}_0\|_{L^2(\mathcal{O})}^2+2\mathrm{E}\left(\int_{0}^{t}\left(f\left(\frac{x}{\varepsilon}, r, \mathbf{u}^\varepsilon(r)\right), \mathbf{u}^\varepsilon(r)\right)dr\right)+\mathrm{E}\left(\int_{0}^{t}\|g(\mathbf{u}^\varepsilon(r))\|_{L_2(H; H)}^2 dr\right).
\end{align}
For the second term on the left-hand side of \eqref{5.3}, using Lemma \ref{lem5.1} we see
$$\liminf_{\varepsilon\rightarrow 0}\mathrm{E}\left(\int_0^t(A^\varepsilon \mathbf{u}^\varepsilon(r), \mathbf{u}^\varepsilon(r))_{V'\times V}dr\right)$$
$$
\geq \mathrm{E}\left(\sum_{i,j=1}^d\int_{\mathcal{O}_r}\int_{D}a_{i,j}(y,r)\left(\frac{ \partial\mathbf{u}(x,r)}{\partial{x_i}}+\frac{ \partial\overline{\mathbf{u}}(x,y,r)}{\partial {y_i}}\right)\cdot \left(\frac{ \partial\mathbf{u}(x,r)}{\partial x_j}+\frac{ \partial\overline{\mathbf{u}}(x,y,r)}{\partial y_j}\right) dxdydr\right),
$$ for every $t \in [0,T].$
Then we have
$$-\limsup_{\varepsilon\rightarrow 0}\mathrm{E}\left(\int_0^t(A^\varepsilon \mathbf{u}^\varepsilon(r), \mathbf{u}^\varepsilon(r))_{V'\times V}dr\right)$$
\begin{align}\label{5.4*}\leq -\mathrm{E}\left(\sum_{i,j=1}^d\int_{\mathcal{O}_r}\int_{D}a_{i,j}(y, r)\left(\frac{ \partial\mathbf{u}(x,r)}{\partial{x_i}}+\frac{ \partial\overline{\mathbf{u}}(x,y,r)}{\partial {y_i}}\right)\cdot \left(\frac{ \partial\mathbf{u}(x,r)}{\partial x_j}+\frac{ \partial\overline{\mathbf{u}}(x,y,r)}{\partial y_j}\right) dxdydr\right).
\end{align}
By $\mathbf{u}^\varepsilon\rightarrow\mathbf{u}$ in $L^2(0,T;H)$, P a.s. and the uniform estimates $\mathbf{u}^\varepsilon \in L^p(\Omega; L^2(0,T;H))$ for $p>2$, we deduce from the Vitali convergence theorem 
\begin{align}\label{4.8}\mathbf{u}^\varepsilon\rightarrow\mathbf{u}, ~{\rm in} ~L^2(\Omega; L^2(0,T;H)).\end{align}
Then, for the second term on the right-hand side of \eqref{5.3}, using \eqref{4.28}, \eqref{4.8} and Lemma \ref{lem5.4} we have
\begin{align}\label{5.5*}
\lim_{\varepsilon\rightarrow 0}\mathrm{E}\left(\int_{0}^{t}\left(f\left(\frac{x}{\varepsilon}, r, \mathbf{u}^\varepsilon(r)\right), \mathbf{u}^\varepsilon(r)\right)dr\right)=\mathrm{E}\left(\int_{\mathcal{O}_r}\int_{D}f\left(y, r, \mathbf{u}(r)\right) \mathbf{u}(r)dxdydr\right).
\end{align}
For the last term on the right-hand side of \eqref{5.3}, by \eqref{4.8}  and $(\mathbf{A.3})$ we have
$$\left|\mathrm{E}\left(\int_{0}^{t}\|g(\mathbf{u}^\varepsilon(r))\|_{L_2(H; H)}^2 dr\right)-\mathrm{E}\left(\int_{0}^{t}\|g(\mathbf{u}(r))\|_{L_2(H; H)}^2 dr\right)\right|
$$
$$
\leq \mathrm{E}\left(\int_{0}^{t}\|g(\mathbf{u}^\varepsilon(r))-g(\mathbf{u}(r))\|_{L_2(H; H)}(\|g(\mathbf{u}^\varepsilon(r))\|_{L_2(H; H)}+\|g(\mathbf{u}(r))\|_{L_2(H; H)}) dr\right)
$$
$$
\leq c\mathrm{E}\left(\int_{0}^{t}\|\mathbf{u}^\varepsilon(r)-\mathbf{u}(r)\|_{H}(1+\|\mathbf{u}^\varepsilon(r)\|_{H}+\|\mathbf{u}(r)\|_{H}) dr\right)
$$
\begin{align}\label{5.6*}
\leq c\left(\mathrm{E}\left(\int_{0}^{t}\|\mathbf{u}^\varepsilon(r)-\mathbf{u}(r)\|_{H}^2dr\right)\right)^\frac{1}{2}
\left(\mathrm{E}\left(\int_{0}^{t}(1+\|\mathbf{u}^\varepsilon(r)\|_{H}+\|\mathbf{u}(r)\|_{H})^2 dr\right)\right)^\frac{1}{2}
\rightarrow 0,
\end{align}
where $c>0$ relies on $c_3, c_4$.

Combining \eqref{1.4} and \eqref{5.3}-\eqref{5.6*}, we have
\begin{eqnarray}
	&&\limsup_{\varepsilon\rightarrow 0}\mathrm{E}\left(\|\sqrt{\rho^\varepsilon(t) }\mathbf{u}^\varepsilon(t)\|_{L^2(\mathcal{O})}^2\right)\nonumber\\
	&&=\|\sqrt{\rho_0 }\mathbf{u}_0\|_{L^2(\mathcal{O})}^2-2\limsup_{\varepsilon\rightarrow 0}\mathrm{E}\left(\int_0^t(A^\varepsilon \mathbf{u}^\varepsilon(r), \mathbf{u}^\varepsilon(r))_{V'\times V}dr\right)\nonumber\\
	&&\quad+2\limsup_{\varepsilon\rightarrow 0}\mathrm{E}\left(\int_{0}^{t}\left(f\left(\frac{x}{\varepsilon}, r, \mathbf{u}^\varepsilon(r)\right), \mathbf{u}^\varepsilon(r)\right)dr\right)+\limsup_{\varepsilon\rightarrow 0}\mathrm{E}\left(\int_{0}^{t}\|g(\mathbf{u}^\varepsilon(r))\|_{L_2(H; H)}^2 dr\right)\nonumber\\
	&&\leq \|\sqrt{\rho_0 }\mathbf{u}_0\|_{L^2(\mathcal{O})}^2\nonumber\\
&&\quad-2\mathrm{E}\left(\sum_{i,j=1}^d\int_{\mathcal{O}_r}\int_{D}a_{i,j}(y, r)\left(\frac{ \partial\mathbf{u}(x,r)}{\partial{x_i}}+\frac{ \partial\overline{\mathbf{u}}(x,y,r)}{\partial {y_i}}\right)\cdot \left(\frac{ \partial\mathbf{u}(x,r)}{\partial x_j}+\frac{ \partial\overline{\mathbf{u}}(x,y,r)}{\partial y_j}\right)  dxdydr\right)\nonumber\\
	&&\quad+2\mathrm{E}\left(\int_{\mathcal{O}_r}\int_{D}f\left(y, r, \mathbf{u}(r)\right) \mathbf{u}(r)dxdydr\right)+\mathrm{E}\left(\int_{0}^{t}\|g(\mathbf{u}(r))\|_{L_2(H;H)}^2 dr\right)\nonumber\\
	&&=\mathrm{E}\left(\|\sqrt{\rho(t) }\mathbf{u}(t)\|_{L^2(\mathcal{O})}^2\right),
\end{eqnarray}
thus, \eqref{5.2} holds.

We next use \eqref{5.1} to prove the strong-$\Sigma$ convergence. From \eqref{5.3}, we have
\begin{align}\label{2.14*}
\mathrm{E}\left(\int_0^t(A^\varepsilon \mathbf{u}^\varepsilon(r), \mathbf{u}^\varepsilon(r))_{V'\times V}dr\right)=-\frac{1}{2}\mathrm{E}\left(\|\sqrt{\rho^\varepsilon(t) }\mathbf{u}^\varepsilon(t)\|_{L^2(\mathcal{O})}^2\right)+\frac{1}{2} \|\sqrt{\rho_0 }\mathbf{u}_0\|_{L^2(\mathcal{O})}^2\nonumber\\
+\mathrm{E}\left(\int_{0}^{t}\left(f\left(\frac{x}{\varepsilon}, r, \mathbf{u}^\varepsilon(r)\right), \mathbf{u}^\varepsilon(r)\right)dr\right)+\frac{1}{2}\mathrm{E}\left(\int_{0}^{t}\|g(\mathbf{u}^\varepsilon(r))\|_{L_2(H;H)}^2 dr\right).
\end{align}
By \eqref{2.14*}, we further have
\begin{eqnarray}
	&&\mathrm{E}\left(\int_0^t(A^\varepsilon (\mathbf{u}^\varepsilon(r)-\overline{\Psi}^\varepsilon), \mathbf{u}^\varepsilon(r)-\overline{\Psi}^\varepsilon)_{V'\times V}dr\right)\nonumber\\
	&&=-\frac{1}{2}\mathrm{E}\left(\|\sqrt{\rho^\varepsilon(t) }\mathbf{u}^\varepsilon(t)\|_{L^2(\mathcal{O})}^2\right)+\frac{1}{2} \|\sqrt{\rho_0 }\mathbf{u}_0\|_{L^2(\mathcal{O})}^2
	\nonumber\\
	&&\quad+\mathrm{E}\left(\int_{0}^{t}\left(f\left(\frac{x}{\varepsilon}, r, \mathbf{u}^\varepsilon(r)\right), \mathbf{u}^\varepsilon(r)\right)dr\right)+\frac{1}{2}\mathrm{E}\left(\int_{0}^{t}\|g(\mathbf{u}^\varepsilon(r))\|_{L_2(H;H)}^2 dr\right)\nonumber\\
	&&
	\quad-2\mathrm{E}\left(\int_0^t(A^\varepsilon \mathbf{u}^\varepsilon(r), \overline{\Psi}^\varepsilon)_{V'\times V}dr\right)+\mathrm{E}\left(\int_0^t(A^\varepsilon \overline{\Psi}^\varepsilon, \overline{\Psi}^\varepsilon)_{V'\times V}dr\right),\label{6.3}
\end{eqnarray}
where $\overline{\Psi}^\varepsilon(x,t):=\left(\varphi(x,t)+\varepsilon\psi(x, \frac{x}{\varepsilon}, t)\right)\cdot 1_{\mathcal{A}}(\omega)$, $\mathcal{A}\in \mathcal{B}(\Omega)$, and $\varphi\in C_{0,div}^\infty(\mathcal{O}_t)$, $\psi\in C_{0, div}^\infty(\mathcal{O}_t)\times C^\infty_{per}( D) $.

Note that as $\varepsilon\rightarrow0$
$$
\mathrm{E}\left(\int_0^t(A^\varepsilon \mathbf{u}^\varepsilon(r), \overline{\Psi}^\varepsilon)_{V'\times V}dr\right)\rightarrow
$$
\begin{align}\label{5.4}
 \sum_{i,j=1}^d \mathrm{E}\left(\int_{\mathcal{O}_r}\int_{D}a_{i,j}(y, r)\left(\frac{\partial \mathbf{u}(x, r)}{\partial x_i}+\frac{\partial \overline{\mathbf{u}}(x,y,r)}{\partial y_i}\right)\left(\frac{\partial \varphi(x,r)}{\partial x_j}+\frac{\partial \psi(x,y,r)}{\partial y_j}
\right)1_{\mathcal{A}}(\omega)dxdydr\right),
\end{align}
and
$$\mathrm{E}\left(\int_0^t(A^\varepsilon \overline{\Psi}^\varepsilon, \overline{\Psi}^\varepsilon)_{V'\times V}dr\right)\rightarrow $$
\begin{align}\label{5.10}
\sum_{i,j=1}^d \mathrm{E}\left(\int_{\mathcal{O}_r}\int_{D}a_{i,j}(y, r)\left(\frac{\partial \varphi(x,r)}{\partial x_i}+\frac{\partial \psi(x,y,r)}{\partial y_i}\right)\left(\frac{\partial \varphi(x,r)}{\partial x_j}+\frac{\partial \psi(x,y,r)}{\partial y_j}
\right)1_{\mathcal{A}}(\omega)dxdydr\right).
\end{align}

Combining \eqref{5.1}, \eqref{5.5*}, \eqref{5.6*}, \eqref{5.4} and \eqref{5.10},  we find
\begin{eqnarray}
	&&\lim_{\varepsilon\rightarrow 0}\mathrm{E}\left(\int_0^t(A^\varepsilon (\mathbf{u}^\varepsilon(r)-\overline{\Psi}^\varepsilon), \mathbf{u}^\varepsilon(r)-\overline{\Psi}^\varepsilon)_{V'\times V}dr\right)\nonumber\\
	&&=-\frac{1}{2}\mathrm{E}\left(\|\sqrt{\rho(t)}\mathbf{u}(t)\|_{L^2(\mathcal{O})}^2\right)+\frac{1}{2}\|\sqrt{\rho_0}\mathbf{u}_0\|_{L^2(\mathcal{O})}^2\nonumber\\
	&&\quad+\mathrm{E}\left(\int_{\mathcal{O}_r}\int_{D}f\left(y, r, \mathbf{u}(r)\right) \mathbf{u}(r)dxdydr\right) +\frac{1}{2}\mathrm{E}\left(\int_{0}^{t}\|g(\mathbf{u}(r))\|_{L_2(H; H)}^2 dr\right)\nonumber\\
	&&\quad-2\sum_{i,j=1}^d \mathrm{E}\left(\int_{\mathcal{O}_r}\int_{D}a_{i,j}(y,r)\left(\frac{\partial \mathbf{u}(x,r)}{\partial x_i}+\frac{\partial \overline{\mathbf{u}}(x,y,r)}{\partial y_i}\right)\left(\frac{\partial \varphi(x,r)}{\partial x_j}+\frac{\partial \psi(x,y,r)}{\partial y_j}
	\right)1_{\mathcal{A}}(\omega)dxdydr\right)\nonumber\\
	&&\quad+\sum_{i,j=1}^d \mathrm{E}\left(\int_{\mathcal{O}_r}\int_{D}a_{i,j}(y,r)\left(\frac{\partial \varphi(x,r)}{\partial x_i}+\frac{\partial \psi(x,y,r)}{\partial y_i}\right)\left(\frac{\partial \varphi(x,r)}{\partial x_j}+\frac{\partial \psi(x,y,r)}{\partial y_j}
	\right)1_{\mathcal{A}}(\omega)dxdydr\right)\nonumber\\
	&&=\sum_{i,j=1}^d \mathrm{E}\left(\int_{\mathcal{O}_r}\int_{D}a_{i,j}(y,r)\left(\frac{\partial \mathbf{u}(x,r)}{\partial x_i}+\frac{\partial \overline{\mathbf{u}}(x,y,r)}{\partial y_i}\right)\left(\frac{\partial \mathbf{u}(x,r)}{\partial x_j}+\frac{\partial \overline{\mathbf{u}}(x,y,r)}{\partial y_j}
	\right)dxdydr\right)\nonumber\\
	&&\quad-2\sum_{i,j=1}^d \mathrm{E}\left(\int_{\mathcal{O}_r}\int_{D}a_{i,j}(y,r)\left(\frac{\partial \mathbf{u}(x,r)}{\partial x_i}+\frac{\partial \overline{\mathbf{u}}(x,y,r)}{\partial y_i}\right)\left(\frac{\partial \varphi(x,r)}{\partial x_j}+\frac{\partial \psi(x,y,r)}{\partial y_j}
	\right)1_{\mathcal{A}}(\omega)dxdydr\right)\nonumber\\
	&&\quad+\sum_{i,j=1}^d \mathrm{E}\left(\int_{\mathcal{O}_r}\int_{D}a_{i,j}(y,r)\left(\frac{\partial \varphi(x,r)}{\partial x_i}+\frac{\partial \psi(x,y,r)}{\partial y_i}\right)\left(\frac{\partial \varphi(x,r)}{\partial x_j}+\frac{\partial \psi(x,y,r)}{\partial y_j}
	\right)1_{\mathcal{A}}(\omega)dxdydr\right)\nonumber\\
	&&=:\mathrm{E}\left(\int_{\mathcal{O}_r}\int_{D}a\partial(\widetilde{\mathbf{u}}-\overline{\Psi})\cdot \partial(\widetilde{\mathbf{u}}-\overline{\Psi})dxdydr\right),\label{5.7}
\end{eqnarray}
where $\widetilde{\mathbf{u}}:=(\mathbf{u}, \overline{\mathbf{u}})$, $\overline{\Psi}:=(\varphi1_{\mathcal{A}}(\omega), \psi1_{\mathcal{A}}(\omega))$. Since   $C_{0,div}^\infty(\mathcal{O}_t)\times 1_{\cdot}(\omega)$ and $C_{0,div}^\infty(\mathcal{O}_t)\times C^\infty_{per}(D)\times 1_{\cdot}(\omega)$ are dense in $L^2(\Omega; L^2(0,T; H)), L^2(\Omega; L^2(0,T; H\times L^2_{per}(D)))$, then for any \( \varepsilon_1 > 0 \), we can choose suitable \( \varphi \) and \( \psi \) such that
\begin{align}\label{5.8}
\mathrm{E}\left(\int_{\mathcal{O}_r}\int_{D}a\partial(\widetilde{\mathbf{u}}-\overline{\Psi})\cdot \partial(\widetilde{\mathbf{u}}-\overline{\Psi})dxdydr\right)\leq \varepsilon_1.
\end{align}
By \eqref{5.7} and \eqref{5.8}, we infer that there exists  $\eta>0$ such that for all $\varepsilon<\eta$
\begin{align*}
\mathrm{E}\left(\int_0^t(A^\varepsilon (\mathbf{u}^\varepsilon(r)-\overline{\Psi}^\varepsilon), \mathbf{u}^\varepsilon(r)-\overline{\Psi}^\varepsilon)_{V'\times V}dr\right)\leq 2\varepsilon_1.
\end{align*}
By condition \eqref{1.2}, we further obtain
\begin{align}\label{5.9}
\mathrm{E}\left(\int_0^t(\mathbf{u}^\varepsilon(r)-\overline{\Psi}^\varepsilon, \mathbf{u}^\varepsilon(r)-\overline{\Psi}^\varepsilon)_{V}dr\right)\leq \frac{2}{\kappa}\varepsilon_1,
\end{align}
also,
\begin{align}\label{5.88}
\mathrm{E}\left(\int_{\mathcal{O}_r}\int_{D}a\partial(\widetilde{\mathbf{u}}-\overline{\Psi})\cdot \partial(\widetilde{\mathbf{u}}-\overline{\Psi})dxdydr\right)\leq \frac{\varepsilon_1}{\kappa}.
\end{align}

Following \eqref{w1} and Lemma \ref{lem5.3}, the strong-$\Sigma$ convergence will hold once we show that
$$\left\|\frac{ \partial\mathbf{u}^\varepsilon}{\partial x_i}\right\|_{L^2(\Omega\times \mathcal{O}_r)}\rightarrow \left\|\frac{\partial\mathbf{u}}{\partial x_i}+\frac{\partial\overline{\mathbf{u}}}{\partial y_i}\right\|_{L^2(\Omega\times \mathcal{O}_r; L^2_{per}(D))}.$$
First, note that
\begin{align*}
\frac{ \partial\overline{\Psi}^\varepsilon}{\partial x_i}\rightarrow \left(\frac{\partial\varphi}{\partial x_i}+\frac{\partial\psi}{\partial y_i}\right)1_{\mathcal{A}}(\omega), ~{\rm in }~ L^2(\Omega\times \mathcal{O}_r),~{\rm strong}-\Sigma,
\end{align*}
then
\begin{align*}
\left\|\frac{ \partial\overline{\Psi}^\varepsilon}{\partial x_i}\right\|_{L^2(\Omega\times \mathcal{O}_r)}\rightarrow\left\|\left(\frac{\partial\varphi}{\partial x_i}+\frac{\partial\psi}{\partial y_i}\right)1_{\mathcal{A}}(\omega)\right\|_{L^2(\Omega\times \mathcal{O}_r; L^2_{per}(D))}.
\end{align*}
Thus, for any $\varepsilon_2>0$, there exists  $\delta>0$ such that $\varepsilon<\delta$, we have
\begin{align}\label{5.11}
\left|\left\|\frac{ \partial\overline{\Psi}^\varepsilon}{\partial x_i}\right\|_{L^2(\Omega\times \mathcal{O}_r)}-\left\|\left(\frac{\partial\varphi}{\partial x_i}+\frac{\partial\psi}{\partial y_i}\right)1_{\mathcal{A}}(\omega)\right\|_{L^2(\Omega\times \mathcal{O}_r; L^2_{per}(D))}\right|\leq \varepsilon_2.
\end{align}

Using the triangle inequality and \eqref{5.9}-\eqref{5.11}, we conclude
\begin{align*}
&\left|\left\|\frac{ \partial\mathbf{u}^\varepsilon}{\partial x_i}\right\|_{L^2(\Omega\times \mathcal{O}_r)}-\left\|\frac{\partial\mathbf{u}}{\partial x_i}+\frac{\partial\overline{\mathbf{u}}}{\partial y_i}\right\|_{L^2(\Omega\times \mathcal{O}_r; L^2_{per}(D))}\right|\nonumber\\
&\leq \left|\left\|\frac{ \partial\mathbf{u}^\varepsilon}{\partial x_i}\right\|_{L^2(\Omega\times \mathcal{O}_r)}-\left\|\frac{ \partial\overline{\Psi}^\varepsilon}{\partial x_i}\right\|_{L^2(\Omega\times \mathcal{O}_r)}\right|\nonumber\\
&\quad+\left|\left\|\frac{ \partial\overline{\Psi}^\varepsilon}{\partial x_i}\right\|_{L^2(\Omega\times \mathcal{O}_r)}-\left\|\left(\frac{\partial\varphi}{\partial x_i}+\frac{\partial\psi}{\partial y_i}\right)1_{\mathcal{A}}(\omega)\right\|_{L^2(\Omega\times \mathcal{O}_r; L^2_{per}(D))}\right|\nonumber\\
&\quad+\left|\left\|\frac{\partial\mathbf{u}}{\partial x_i}+\frac{\partial\overline{\mathbf{u}}}{\partial y_i}\right\|_{L^2(\Omega\times \mathcal{O}_r; L^2_{per}(D))}-\left\|\left(\frac{\partial\varphi}{\partial x_i}+\frac{\partial\psi}{\partial y_i}\right)1_{\mathcal{A}}(\omega)\right\|_{L^2(\Omega\times \mathcal{O}_r; L^2_{per}(D))}\right|\nonumber\\
&\leq \frac{3}{\kappa}\varepsilon_1+\varepsilon_2,
\end{align*}
the arbitrariness of $\varepsilon_1, \varepsilon_2$ leads to the desired result.
\qquad$\Box$

\section*{Acknowledgments}
 Z. Qiu is supported by the National Natural Science Foundation of China
(Grant No. 12401305), the National Science Foundation for Colleges and Universities in Jiangsu Province (Grant No. 24KJB110011) and the National Science Foundation of Jiangsu
Province (Grant No. BK20240721).
J. Chen and J. Duan are partly supported by the NSFC  W2541005, the Guangdong Provincial Key Laboratory of Mathematical and Neural Dynamical Systems (Grant 2024B1212010004),   the Cross Disciplinary Research Team on Data Science and Intelligent Medicine (2023KCXTD054), and the Guangdong-Dongguan Joint Research Fund (Grant 2023A151514 0016).
\section*{Data availability}
Data sharing not applicable to this article as no datasets were generated or analysed
during the current study.

\section*{Statements and Declarations}
The authors have no relevant financial or non-financial interests to disclose.

\smallskip

\bigskip

\end{document}